\definecolor{MyBlue}{rgb}{0,0,1}
\definecolor{MyRed}{rgb}{1,0,0}
\definecolor{MyGreen}{rgb}{0,1,0}
\definecolor{MyIndigo}{rgb}{0.7254,0,1}
\definecolor{MyOrange}{rgb}{1,0.4431,0}
\DeclareFontFamily{U}{mathx}{\hyphenchar\font45}
\DeclareFontShape{U}{mathx}{m}{n}{
      <5> <6> <7> <8> <9> <10>
      <10.95> <12> <14.4> <17.28> <20.74> <24.88>
      mathx10
      }{}
\DeclareSymbolFont{mathx}{U}{mathx}{m}{n}
\DeclareMathAccent{\widecheck}{0}{mathx}{"71}
\newtheorem{theorem}{Theorem}[section]
\newtheorem{lemma}[theorem]{Lemma}
\newtheorem{proposition}[theorem]{Proposition}
\newtheorem{corollary}[theorem]{Corollary}
\newtheorem{conjecture}[theorem]{Conjecture}
\theoremstyle{definition}
\newtheorem{definition}[theorem]{Definition}
\newenvironment{remark}
  {\pushQED{\qed}\remarkx}
  {\popQED\endremarkx}
\DeclareMathOperator{\Pop}{\mathsf{Pop}}
\DeclareMathOperator{\SP}{SP}
\DeclareMathOperator{\rev}{rev}
\DeclareMathOperator{\half}{half}
\DeclareMathOperator{\cyc}{cyc}
\newcommand{\dfn}[1]{\textcolor{blue}{\emph{#1}}}
\begin{document}
\title{Pop-Stack-Sorting for Coxeter Groups}
\author{Colin Defant}
\address{Princeton University \\ Department of Mathematics \\ Princeton, NJ 08544}
\email{cdefant@princeton.edu}

\begin{abstract}
Let $W$ be an irreducible Coxeter group. We define the \emph{Coxeter pop-stack-sorting operator} $\Pop:W\to W$ to be the map that fixes the identity element and sends each nonidentity element $w$ to the meet of the elements covered by $w$ in the right weak order. When $W$ is the symmetric group $S_n$, $\Pop$ coincides with the pop-stack-sorting map. Generalizing a theorem about the pop-stack-sorting map due to Ungar, we prove that \[\sup\limits_{w\in W}\left|O_{\Pop}(w)\right|=h,\] where $h$ is the Coxeter number of $W$ (with $h=\infty$ if $W$ is infinite) and $O_f(w)$ denotes the forward orbit of $w$ under a map $f$. When $W$ is finite, this result is equivalent to the statement that the maximum number of terms appearing in the Brieskorn normal form of an element of $W$ is $h-1$. More generally, we define a map $f:W\to W$ to be \emph{compulsive} if for every $w\in W$, $f(w)$ is less than or equal to $\Pop(w)$ in the right weak order. We prove that if $f$ is compulsive, then $\sup\limits_{w\in W}|O_f(w)|\leq h$. This result is new even for symmetric groups. 

We prove that $2$-pop-stack-sortable elements in type $B$ are in bijection with $2$-pop-stack-sortable permutations in type $A$, which were enumerated by Pudwell and Smith. Claesson and Gu{\dh}mundsson proved that for each fixed nonnegative integer $t$, the generating function that counts $t$-pop-stack-sortable permutations in type $A$ is rational; we establish analogous results in types~$B$ and~$\widetilde A$.  
\end{abstract}


\maketitle

\bigskip

\section{Introduction}\label{Sec:Intro} 

\subsection{Sorting Operators}
\emph{Noninvertible combinatorial dynamics} is the study of combinatorially-defined dynamical systems on sets of combinatorial objects, where emphasis is placed on understanding the transient (i.e., non-periodic) points. Given an arbitrary set $X$ and a map $f:X\to X$, let $f^t$ denote the $t^\text{th}$ iterate of $f$. The \dfn{forward orbit} of an element $x\in X$ under the map $f$ is the set $O_f(x)=\{x,f(x),f^2(x),\ldots\}$. It is natural to consider the quantity $\sup\limits_{x\in X}\left|O_f(x)\right|$. Indeed, if $f$ is invertible and all orbits are finite, this is equivalent to determining the largest size of a periodic orbit. A similar question asks for the maximum possible number of iterations needed to send every transient point to a periodic point. If $f$ has a fixed point $x_0$ such that every forward orbit under $f$ contains $x_0$ (as will be the case for all of the dynamical systems considered in this paper), then these two questions are essentially equivalent. 

The symmetric group $S_n$, which is the group of permutations of the set $[n]=\{1,\ldots,n\}$, provides a rich source of noninvertible combinatorial dynamical systems. We can write a permutation $w\in S_n$ in one-line notation as $w(1)\cdots w(n)$. A great amount of research in combinatorics and computer science has focused on sorting operators, which are dynamical systems on $S_n$ that have the identity permutation $e=123\cdots n$ as their unique periodic (necessarily fixed) point. Some typical examples of such operators include the bubble sort map (see \cite{Chung, AlbertBubble} and \cite[pages 106--110]{Knuth2}), West's stack-sorting map (see \cite{BonaSurvey, DefantCounting, DefantMonotonicity, DefantTroupes, West} and the references therein), the map $\text{revstack}$ defined in \cite{Dukes}, the pop-stack-sorting map \cite{AlbertVatter, Asinowski, Asinowski2, Elder, ClaessonPop, ClaessonPop2, Pudwell, Ungar}, and the $\mathtt{Queuesort}$ map \cite{Cioni, Magnusson}. 

A natural condition one might wish to place on a sorting operator $f:S_n\to S_n$, which all five of the specific sorting operators listed in the previous paragraph satisfy, is that it does not remove non-inversions. This means that if $a,b\in[n]$ are such that $a<b$ and $a$ appears to the left of $b$ in $w$, then $a$ must appear to the left of $b$ in $f(w)$. Among the sorting operators $f$ that do not remove non-inversions, we will be primarily interested in the ones that remove consecutive inversions, meaning that if $a,b\in[n]$ are such that $a<b$ and $a$ appears immediately to the right of $b$ in $w$, then $a$ appears to the left of $b$ in $f(w)$. West's stack-sorting map, the map $\text{revstack}$, and the pop-stack-sorting map all remove consecutive inversions; the bubble sort map and $\mathtt{Queuesort}$, however, do not. 

We are actually interested in generalizing these notions to arbitrary Coxeter groups. The condition that $f:S_n\to S_n$ does not remove non-inversions is equivalent to the condition that $f(w)\leq_R w$ for all $w\in S_n$, where $\leq_R$ is the right weak order on $S_n$. Saying that $f$ does not remove non-inversions \emph{and} removes consecutive inversions is equivalent to saying that $f(w)\leq_R w$ and $f(w)\leq_R ws$ for every $w\in W$ and every right descent $s$ of $w$. In what follows, we let $\leq_R$ denote the right weak order on an arbitrary Coxeter group $W$, and we let $D_R(w)$ denote the right descent set of an element $w\in W$ (see Section~\ref{Subsec:Coxeter} for definitions).

\begin{definition}\label{DefCox1}
Let $W$ be a Coxeter group. We say a map $f:W\to W$ is \dfn{compulsive}\footnote{Google gives the following two definitions for the word \emph{compulsive}: (1) resulting from or relating to an irresistible urge, especially one that is against one's conscious wishes; (2) irresistibly interesting or exciting; compelling. Our motivation for using this word comes from the first definition since a compulsive map on $S_n$ compulsively removes all consecutive inversions. However, we hope to convince the reader that the second definition is also appropriate.} if $f(w)\leq_R w$ and $f(w)\leq_R ws$ for every $w\in W$ and every $s\in D_R(w)$.  
\end{definition}

Note that the condition $f(w)\leq_R w$ is necessary in Definition~\ref{DefCox1} in order to guarantee that $f$ fixes the identity element $e$. 

A seminal result due to Bj\"orner \cite{Bjorner} states that the right weak order on a Coxeter group $W$ is a complete meet-semilattice. This means that every set $A\subseteq W$ has a greatest lower bound, called the \dfn{meet} of $A$, which we denote by $\bigwedge_RA$. Hence, a map $f:W\to W$ is compulsive if and only if $f(w)\leq_R\bigwedge_R(\{ws:s\in D_R(w)\}\cup\{w\})$ for every $w\in W$. This motivates the following definition. 

\begin{definition}
Let $W$ be a Coxeter group. The \dfn{Coxeter pop-stack-sorting operator} on $W$ is the map $\Pop_W:W\to W$ defined by \[\Pop_W(w)=\bigwedge\nolimits_R(\{ws:s\in D_R(w)\}\cup\{w\})\] for every $w\in W$. 
\end{definition}

We often write $\Pop$ instead of $\Pop_W$ if the group $W$ is clear from context. 

The Coxeter pop-stack-sorting operator is certainly compulsive; its name comes from the fact, which we will verify in Section~\ref{Subsec:PopMap}, that the Coxeter pop-stack-sorting operator on $S_n$ is precisely the pop-stack-sorting map. 
This map, which is a deterministic analogue of a pop-stack-sorting machine introduced by Avis and Newborn in \cite{Avis}, first appeared in a different guise in a paper of Ungar's about discrete geometry \cite{Ungar}; its popularity has grown rapidly over the past few years \cite{AlbertVatter, Asinowski, Asinowski2, Elder, Pudwell, ClaessonPop, ClaessonPop2}. 
Ungar \cite{Ungar}, motivated by a question involving directions determined by points in the plane, proved that the maximum possible size of a forward orbit of a permutation in $S_n$ under the pop-stack-sorting map is $n$; this settled a conjecture due to Goodman and Pollack \cite{Goodman}. In other words, Ungar's theorem states that $\Pop^{n-1}(w)=e$ for every $w\in S_n$ and that there exists $v\in S_n$ such that $\Pop^{n-2}(v)\neq e$. The proof requires an unexpected amount of insight. Recently, Albert and Vatter \cite{AlbertVatter} provided an alternative proof of Ungar's theorem. 

Our first main result generalizes Ungar's theorem to an arbitrary irreducible\footnote{The assumption of irreducibility does not limit the scope of the theorem. If $W=W_1\times W_2$ is reducible, then $\Pop$ acts on $W_1$ and $W_2$ independently, and one can understand the dynamics of $\Pop$ on $W$ by ``piecing together'' information about the dynamics of $\Pop$ on $W_1$ and $W_2$.} Coxeter group. Let $(W,S)$ be a Coxeter system. If $W$ is finite, then a \dfn{Coxeter element} of $W$ is an element obtained by multiplying the simple generators (the elements of $S$) in an arbitrary order. All Coxeter elements have the same order in the group $W$; this order is called the \dfn{Coxeter number} of $W$ and is typically denoted by $h$. For example, the Coxeter number of $S_n$ is $n$. We make the convention that the Coxeter number of an infinite Coxeter group is $\infty$.  

\begin{theorem}\label{ThmCox1}
If $W$ is an irreducible Coxeter group with Coxeter number $h$, then \[\sup_{w\in W}\left|O_{\Pop}(w)\right|=h.\] 
\end{theorem}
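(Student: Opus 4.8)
The plan is to prove the two inequalities $\sup_{w\in W}|O_{\Pop}(w)|\leq h$ and $\sup_{w\in W}|O_{\Pop}(w)|\geq h$ separately. For the upper bound, since $\Pop$ is compulsive, it would suffice to prove the more general claim advertised in the abstract: that every compulsive map $f$ satisfies $\sup_{w\in W}|O_f(w)|\leq h$. I would try to control the orbit length by tracking a statistic that must strictly decrease along the orbit until reaching $e$, and whose initial value is bounded by $h-1$. The natural candidate is the length function $\ell$, but length alone drops too slowly in general; instead I expect the right quantity involves the geometry of the reflection (Tits) representation. The idea is to associate to each $w$ the set of positive roots sent to negative roots (the inversion set), and to show that a compulsive step ``reflects'' the element far enough that after at most $h-1$ steps every such root is eliminated.

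\medskip

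For the infinite case, $h=\infty$, so the upper bound is vacuous and I would only need unbounded orbits; taking powers $c^k$ of a Coxeter element $c$ (whose lengths grow without bound) and arguing that $\Pop$ decreases length by a controlled amount should give orbits of arbitrarily large size. For the finite case the crux is the sharp constant $h$. Here I would exploit the theory of Coxeter elements and the relationship between $h$, the number of positive roots $N=|\Phi^+|$, and the rank $n$ via the identity $N=nh/2$. A promising route for the upper bound is to linearize: place $W$ in its geometric representation on a Euclidean space, pick a point deep in the fundamental chamber, and follow its $\Pop$-images. Compulsiveness says $\Pop(w)\leq_R w$ and $\Pop(w)\leq_R ws$ for each $s\in D_R(w)$, which geometrically forces $\Pop(w)$ to lie ``below'' $w$ with respect to all descent walls simultaneously, i.e. the inversion set of $\Pop(w)$ avoids the simple reflections available at $w$. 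Iterating this, the inversion sets form a chain that must be exhausted, and the maximal length of such a chain of ``biconvex'' sets shrinking under removal of all current boundary roots is exactly $h-1$.

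\medskip

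The main obstacle I anticipate is proving the upper bound $|O_f(w)|\leq h$ tightly for \emph{all} compulsive $f$, not just $\Pop$ itself, since compulsiveness is a one-sided inequality that gives a lot of freedom to $f$. The key technical lemma I would aim to isolate is a purely order-theoretic or root-theoretic statement: if $w_0,w_1,w_2,\ldots$ is any sequence in $W$ with $w_{i+1}\leq_R\Pop(w_i)$, then $w_{h-1}=e$. To prove this I would reformulate $\Pop$ in terms of inversion sets and show that passing from the inversion set of $w_i$ to that of $\Pop(w_i)$ removes, at minimum, all roots lying on the walls of the chamber $w_i$ adjacent to $w_i\cdot e$; a rank-one or dihedral reduction, combined with the fact that the longest dihedral orbits have size governed by the local bond strengths $m(s,t)$, should bootstrap to the global bound via the Coxeter number. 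The sharpness direction, by contrast, should be comparatively routine: I would exhibit an explicit long orbit by starting at the longest element $w_0$ (when $W$ is finite) and computing, or by using a Coxeter element to witness that $h-1$ steps are genuinely required.

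\medskip

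Finally, for the matching lower bound I expect to construct an element whose $\Pop$-orbit has exactly $h$ terms. When $W$ is the symmetric group this is Ungar's decreasing permutation, and in general I would guess the longest element $w_0$ (or a suitable element built from a Coxeter element) realizes the bound; verifying that $\Pop^{h-2}(w_0)\neq e$ would reduce to a direct dihedral or rank-two computation propagated along a reduced word. Assembling these pieces---the compulsive upper bound, the infinite-case divergence, and the explicit extremal element---yields the equality $\sup_{w\in W}|O_{\Pop}(w)|=h$.
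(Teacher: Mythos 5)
Your overall architecture (prove the sharp upper bound for all compulsive maps, handle the infinite case by unbounded lengths, and exhibit an extremal element) matches the paper's, but two of the three pieces have genuine gaps. First, your proposed extremal element is wrong: for any finite $W$ the longest element satisfies $D_R(w_0)=S$, so $\Pop(w_0)=w_0w_0(S)=w_0^2=e$ and $\left|O_{\Pop}(w_0)\right|=2$. The element that actually realizes the bound (when $h$ is even) is the minimal coset representative $\prescript{J}{}\!w_0$ for a maximal proper $J=S\setminus\{s\}$, and verifying that its orbit has size exactly $h$ is not a ``routine rank-two computation'': it requires writing $w_0=(c_Xc_Y)^{h/2}$ for a bipartite Coxeter element, extracting a reduced word $v_1\cdots v_h$ for $\prescript{J}{}\!w_0$ by deleting letters from the concatenation of $h$ alternating blocks, and showing that $v_1,\ldots,v_{h-1}$ are nonempty while $v_h$ is empty, whence $\Pop$ strips off exactly one block $v_i$ per step. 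This is where the Coxeter number genuinely enters, and nothing in your outline produces it.

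Second, the upper bound. Your key lemma --- that any sequence with $w_{i+1}\leq_R\Pop(w_i)$ reaches $e$ within $h-1$ steps --- is the right target, but your proposed proof (a chain of biconvex inversion sets whose maximal length is $h-1$, established by ``dihedral reduction \dots via the local bond strengths $m(s,t)$'') does not work as stated: the assertion that such chains have length at most $h-1$ \emph{is} the theorem, and $h$ is not controlled by the local data $m(s,t)$ (for instance $E_8$ has only bonds of order $2$ and $3$ yet $h=30$), so no rank-two bootstrap can produce the sharp constant. The paper's actual mechanism is different and essential: project onto the maximal parabolic quotients $\prescript{J}{}\!W$, prove that the right descents of every $\Pop$-iterate of $\prescript{J}{}\!w_0$ pairwise commute, and use that commutativity to show that $\Pop$ is monotone in the \emph{strong Bruhat order} below such elements; combined with the lemma that a compulsive map satisfies $\prescript{J}{}\!(f(w))\leq_R\Pop(\prescript{J}{}\!w)$, an induction yields $\prescript{J}{}\!(f^{h-1}(w))\leq\Pop^{h-1}(\prescript{J}{}\!w_0)=e$ for every maximal $J$, forcing $f^{h-1}(w)=e$. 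Your infinite case is fine in spirit when $S$ is finite (length drops by a bounded amount per step), but the infinite-rank case needs a separate argument you do not address. As it stands, the proposal correctly identifies what must be proved but supplies no workable argument for either the sharp upper bound or the extremal element.
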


The preceding theorem tells us that if $W$ is finite, then $\Pop^{h-1}(w)=e$ for all $w\in W$ and $\Pop^{h-2}(v)\neq e$ for some $v\in W$. On the other hand, if $W$ is infinite, this theorem says that there are arbitrarily large forward orbits of elements of $W$ under $\Pop$. Our proof of Theorem~\ref{ThmCox1} is Coxeter-theoretic and is mostly type-independent in the sense that it avoids the use of combinatorial models of finite Coxeter groups. However, we must treat symmetric groups and dihedral groups separately from the other finite irreducible Coxeter groups; this stems from the fact that all finite irreducible Coxeter groups that are not symmetric groups or dihedral groups have even Coxeter numbers. 

\begin{figure}[ht]
  \begin{center}{\includegraphics[height=4cm]{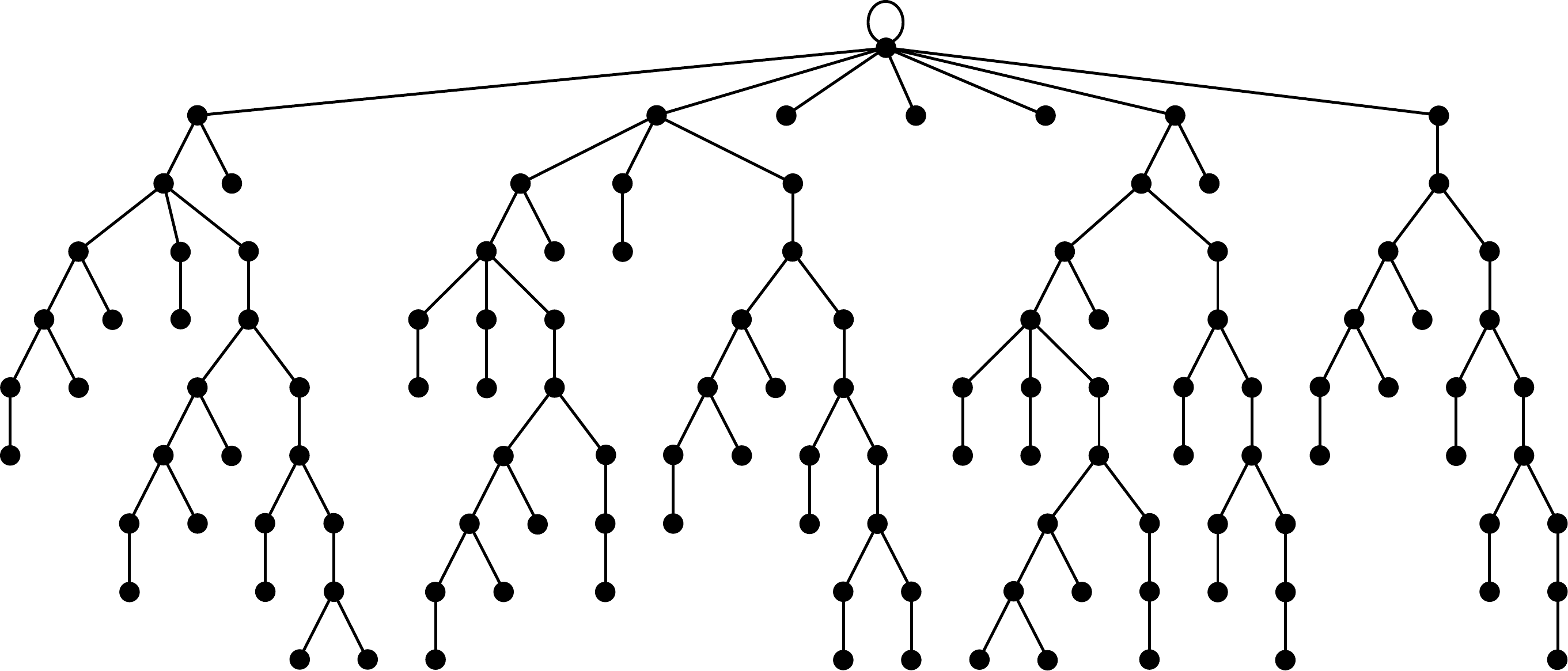}}
  \end{center}
  \caption{A diagram of the Coxeter pop-stack-sorting operator $\Pop:H_3\to H_3$. Each vertex in the tree represents an element of $H_3$ (we omit the labels). The root vertex is the identity element $e$; the parent of each non-root vertex $w$ is $\Pop(w)$. As predicted by Theorem~\ref{ThmCox1}, the maximum size of a forward orbit is $10$, which is the Coxeter number of $H_3$.}\label{FigCox1}
\end{figure}

It turns out that $\Pop$ is very closely related to \emph{Brieskorn normal form}, which was introduced in the foundational works of Brieskorn and Brieskorn--Saito on complex reflection groups and Artin groups \cite{Brieskorn,Brieskorn2}. This normal form is a specific factorization of an element of $W$ into longest elements of parabolic subgroups. It follows from equation \eqref{EqCox1} below that the elements of $O_{\Pop}(w)$ are exactly the prefixes of the Brieskorn normal form of $w$.\footnote{The articles \cite{Brieskorn,Brieskorn2} use left weak order where we use right weak order, but this makes little difference.} More precisely, the Brieskorn normal form of $w$ is $\beta_1\cdots\beta_r$, where $r=|O_{\Pop}(w)|-1$ and $\beta_1\cdots\beta_i=\Pop^{r-i}(w)$ for all $0\leq i\leq r$. Hence, the following corollary is an immediate consequence of Theorem~\ref{ThmCox1}.

\begin{corollary}
If $W$ is a finite irreducible Coxeter group with Coxeter number $h$, then the maximum number of terms that can appear in the Brieskorn normal form of an element of $W$ is $h-1$.
\end{corollary}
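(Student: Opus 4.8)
The plan is to read the corollary off Theorem~\ref{ThmCox1} through the dictionary between forward orbits under $\Pop$ and Brieskorn normal forms that is recorded just before the statement. Recall from that discussion that if the Brieskorn normal form of an element $w\in W$ is $\beta_1\cdots\beta_r$, then $r=|O_{\Pop}(w)|-1$ and $\beta_1\cdots\beta_i=\Pop^{r-i}(w)$ for every $0\le i\le r$. In particular, the number of terms appearing in the Brieskorn normal form of $w$ is exactly $|O_{\Pop}(w)|-1$, so the corollary is really just a reformulation of Theorem~\ref{ThmCox1}.

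Concretely, I would take the maximum over all $w\in W$ of this quantity. Since $W$ is finite, every forward orbit is finite and terminates at $e$, so each orbit size is a well-defined positive integer and the maximum is attained. Then
\[
\max_{w\in W}\bigl(\text{number of terms in the Brieskorn normal form of }w\bigr)=\max_{w\in W}\bigl(|O_{\Pop}(w)|-1\bigr)=\Bigl(\sup_{w\in W}|O_{\Pop}(w)|\Bigr)-1.
\]
Invoking Theorem~\ref{ThmCox1}, which asserts $\sup_{w\in W}|O_{\Pop}(w)|=h$, the right-hand side equals $h-1$, as claimed.

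The one point deserving care---the substance concealed in the word ``immediate''---is the identity $r=|O_{\Pop}(w)|-1$ tying the length of the Brieskorn factorization to the orbit size. I would establish this from equation~\eqref{EqCox1}: one checks that applying $\Pop$ to $w=\beta_1\cdots\beta_r$ strips off the final factor, carrying $\Pop^{r-i}(w)=\beta_1\cdots\beta_i$ to $\Pop^{r-i+1}(w)=\beta_1\cdots\beta_{i-1}$. Hence the iterates $\Pop^{0}(w),\Pop^{1}(w),\ldots,\Pop^{r}(w)=e$ run through precisely the prefixes $\beta_1\cdots\beta_r,\ \beta_1\cdots\beta_{r-1},\ \ldots,\ \beta_1,\ e$ of the normal form. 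Because the factorization is reduced, these prefixes have strictly increasing lengths and so are pairwise distinct, giving exactly $r+1$ elements in $O_{\Pop}(w)$ and therefore $r=|O_{\Pop}(w)|-1$.

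Given this bijection between the Brieskorn factors of $w$ and the non-identity steps of its $\Pop$-orbit, the corollary carries no difficulty beyond Theorem~\ref{ThmCox1} itself; indeed the genuine work is entirely contained in that theorem, and the displayed computation above is the whole of the argument once the dictionary is in hand.
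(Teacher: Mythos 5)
Your proposal is correct and follows the same route as the paper: the corollary is read off from Theorem~\ref{ThmCox1} via the dictionary identifying the prefixes $\beta_1\cdots\beta_i$ of the Brieskorn normal form with the iterates $\Pop^{r-i}(w)$, so that the number of terms equals $|O_{\Pop}(w)|-1$. Your extra verification that the $r+1$ prefixes are pairwise distinct (via strictly increasing lengths) is a reasonable way to justify the identity $r=|O_{\Pop}(w)|-1$ that the paper asserts as following from \eqref{EqCox1}.
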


Our second main result applies to all compulsive maps on an irreducible Coxeter group. This theorem is new even for symmetric groups.

\begin{theorem}\label{ThmCox2}
Let $W$ be an irreducible Coxeter group with Coxeter number $h$. If $f:W\to W$ is compulsive, then \[\sup_{w\in W}\left|O_f(w)\right|\leq h.\] 
\end{theorem}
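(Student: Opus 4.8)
The plan is to produce a single statistic on $W$ that is monotone for $\leq_R$, is bounded above by $h-1$, and strictly decreases along every $\Pop$-step; compulsiveness will then transfer that decrease from $\Pop$ to $f$. Since the Coxeter number of an infinite group is $\infty$, the inequality is vacuous when $W$ is infinite, so I would assume $W$ is finite. By Bj\"orner's theorem and the definition of $\Pop$, a map $f$ is compulsive exactly when $f(w)\leq_R\Pop(w)$ for all $w$. Writing $T(w)=\{\alpha\in\Phi^+ : w^{-1}\alpha\in\Phi^-\}$ for the right inversion set, and using that $u\leq_R v$ is equivalent to $T(u)\subseteq T(v)$, this says $T(f(w))\subseteq T(\Pop(w))\subseteq T(w)$.

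Next I would attach to each positive root a \emph{level} $\mathrm{lv}(\alpha)\in\{1,\dots,h-1\}$ and set $M(w)=\max\{\mathrm{lv}(\alpha):\alpha\in T(w)\}$, with $M(e)=0$. For Weyl groups $\mathrm{lv}$ is just the ordinary height $\sum_i c_i$ of $\alpha=\sum_i c_i\alpha_i$; the relevant fact is that the highest root has height $h-1$, so $M(w)\leq h-1$ for every $w$. For the finite irreducible groups other than the symmetric and dihedral groups — all of which have even Coxeter number — I would replace the height by an angular level coming from the action of a Coxeter element on the Coxeter plane (rotation by $2\pi/h$), which again takes values in $\{1,\dots,h-1\}$; the symmetric groups (where $h$ may be odd) and the dihedral groups would be handled on their own, matching the case division already announced for Theorem~\ref{ThmCox1}. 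In all cases $T(u)\subseteq T(v)$ forces $M(u)\leq M(v)$, so $M$ is monotone for $\leq_R$.

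The engine of the argument is the claim that $\Pop$ removes every inversion of maximal level, i.e. $M(\Pop(w))\leq M(w)-1$ for $w\neq e$. Granting this, monotonicity of $M$ together with $f(w)\leq_R\Pop(w)$ gives $M(f(w))\leq M(\Pop(w))\leq M(w)-1$ for any compulsive $f$ and any $w\neq e$; iterating yields $M(f^{k}(w))\leq M(w)-k\leq (h-1)-k$, whence $f^{h-1}(w)=e$ and $\left|O_f(w)\right|\leq h$. To prove the claim I would use the factorization $\Pop(w)=w\,w_0(D_R(w))$: the inversions lying in the parabolic root subsystem $\Phi^+_{D_R(w)}$ are deleted outright, and the survivors are constrained by the biconvexity of $T(\Pop(w))$, the inversion set of the meet $\bigwedge_R\{ws:s\in D_R(w)\}$. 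Each descent $s$ deletes the single covering root $-w(\alpha_s)\in T(w)$, and the point is that a maximal-level root is a nonnegative combination of these deleted covering roots, so co-convexity of the complement of the biconvex set $T(\Pop(w))$ expels it as well.

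The main obstacle is precisely this claim: verifying that the meet defining $\Pop$ strips off all top-level inversions, which is where the genuine work lies and where the level function must be controlled type by type. The awkward inputs are the uniform bound $M(w)\leq h-1$ and the expulsion mechanism for non-crystallographic groups, since there the root poset need not be graded and the naive height is unavailable; this is exactly why I expect to route those cases through the Coxeter-plane level while treating symmetric and dihedral groups separately. Once the claim is in hand, the passage from $\Pop$ to an arbitrary compulsive $f$ is immediate — it is nothing more than one application of monotonicity of $M$ to the inequality $f(w)\leq_R\Pop(w)$ — which is what makes Theorem~\ref{ThmCox2} essentially as accessible as the upper bound half of Theorem~\ref{ThmCox1}.
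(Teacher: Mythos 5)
Your reduction of the theorem to the single claim ``$M(\Pop(w))\leq M(w)-1$ for all $w\neq e$'' does not survive contact with type $A$: the claim is false. Take $w=52341\in S_5$ (so $h=5$). Its descending runs are $52\mid 3\mid 41$, hence $\Pop(w)=25314$. The value pair $(1,5)$ is an inversion of $w$ whose associated root is the highest root, of height $4=h-1$; but $5$ and $1$ lie in \emph{different} descending runs of $w$, so they keep their relative order under $\Pop$ and $(1,5)$ is still an inversion of $25314$. Thus $M(\Pop(w))=M(w)=4$, and your potential function fails to decrease. The same example breaks the proposed expulsion mechanism: the covering roots deleted by the two descents of $w$ are $e_5-e_2$ and $e_4-e_1$, and the top-level root $e_5-e_1$ is not a nonnegative combination of them, so convexity of the complement of $T(\Pop(w))$ gives you nothing. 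A useful sanity check is that a monotone statistic bounded by $h-1$ and strictly decreasing under $\Pop$ would make Ungar's theorem for $S_n$ a two-line exercise, whereas it is known to require substantial work; that alone should have raised suspicion. Note also that Theorem~\ref{ThmCox2} is new even for $S_n$, so ``handling the symmetric groups on their own'' cannot mean citing Ungar: his theorem bounds the orbits of $\Pop$ itself, not of an arbitrary compulsive $f$.

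Since the claim is the entire engine, the argument does not go through, and the actual proof is structured quite differently: no single global statistic is used. The paper fixes each maximal proper subset $J=S\setminus\{s\}$ and proves $f^{h-1}(w)\in W_J$ by inducting the comparison $\prescript{J}{}\!(f^t(w))\leq\Pop^t(\prescript{J}{}\!w_0)$ in the \emph{strong Bruhat} order. This rests on the order-preservation of the parabolic quotient map (Lemma~\ref{LemCox3}), the inequality $\prescript{J}{}\!(f(v))\leq_R\Pop(\prescript{J}{}\!v)$ for compulsive $f$ (Lemma~\ref{LemCox5}), and, crucially, the fact that the right descents of each $\Pop^t(\prescript{J}{}\!w_0)$ pairwise commute (Proposition~\ref{PropCox1}, proved via the bipartite factorization $w_0=(c_Xc_Y)^{h/2}$), which is exactly what makes $\Pop$ monotone along that one carefully chosen chain (Lemma~\ref{LemCox4}). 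If you want to rescue your outline, you would have to replace $M$ by something playing the role of these $|S|$ parabolic comparisons; the maximal height of an inversion is not it.
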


The conclusion of Theorem~\ref{ThmCox2} is trivial if $W$ is infinite, so we will only need to consider the case when $W$ is finite. Our proof of this result is delicate and makes use of the Coxeter pop-stack-sorting operator, so we will really need to deliver it simultaneously with the proof of Theorem~\ref{ThmCox1}. 

\begin{remark}
It is perhaps tempting to think that Theorem~\ref{ThmCox2}
would follow as an immediate corollary of Theorem~\ref{ThmCox1} since a compulsive map $f:W\to W$ satisfies $f(w)\leq_R\Pop(w)$ for all $w\in W$ by definition. More precisely, one might think that $f^t(w)\leq_R\Pop^t(w)$ for all $w\in W$ and $t\geq 0$ so that, by Theorem~\ref{ThmCox1}, we have (assuming $W$ is finite) $f^{h-1}(w)\leq_R\Pop^{h-1}(w)=e$ for all $w\in W$. However, this is not the case. If $W=S_5$ and $\mathtt{s}$ is West's stack-sorting map (see \cite{BonaSurvey, DefantCounting} for the definition), then $\mathtt{s}^3(42351)=21345$, but $\Pop^3(42351)=12345=e$.  
\end{remark}

Another typical approach to noninvertible combinatorial dynamical systems, especially sorting operators, concerns the enumeration of elements that require at most some fixed number $t$ of iterations to reach a periodic point. In the case of West's stack-sorting map, these elements are the \emph{$t$-stack-sortable} permutations, which have been studied extensively, especially for $t\in\{1,2,3\}$ (see \cite{Albert, Branden3, BonaSurvey, BonaSymmetry, DefantCounting, DefantTroupes, DefantElvey} and the references therein). For the pop-stack-sorting map, these elements are the \emph{$t$-pop-stack-sortable} permutations investigated in \cite{Pudwell, ClaessonPop, Elder}. Given a Coxeter group $W$, let us say an element $w\in W$ is \dfn{$t$-pop-stack-sortable} if $\Pop_W^t(w)=e$. In other words, the set of $t$-pop-stack-sortable elements of $W$ is $\Pop_W^{-t}(e)$. At the end of Section~\ref{Subsec:Coxeter}, we will establish the following simple proposition. The \emph{nerve} of Coxeter system is defined in that section. 

\begin{proposition}\label{PropCox2}
Let $(W,S)$ be a Coxeter system with nerve $\mathcal N(W,S)$. There is a bijection between the set $\Pop_{W}^{-1}(e)$ of $1$-pop-stack-sortable elements of $W$ and $\mathcal N(W,S)$. In particular, if $W$ is finite, then the number of $1$-pop-stack-sortable elements of $W$ is $2^{|S|}$. 
\end{proposition}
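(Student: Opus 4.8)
The plan is to reduce everything to the explicit description of $\Pop$ recorded in equation \eqref{EqCox1}, namely $\Pop(w)=w\,w_0(D_R(w))$, where $w_0(J)$ denotes the longest element of the standard parabolic subgroup $W_J$. The one point needing care is that $w_0(D_R(w))$ is well defined, i.e.\ that $W_{D_R(w)}$ is finite for every $w\in W$. I would get this from the parabolic coset decomposition: writing $w=w^{J}w_{J}$ with respect to $J=D_R(w)$ (so that $w^J$ is the minimal-length representative of $wW_J$ and $\ell(w)=\ell(w^J)+\ell(w_J)$), one has $D_R(w_J)=D_R(w)\cap J=J$, so $w_J\in W_J$ has all of $J$ as right descents; this forces $W_J$ to be finite with $w_J=w_0(J)$. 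Using \eqref{EqCox1} together with the fact that the longest element of a finite parabolic subgroup is an involution, I obtain the equivalence
\[
\Pop(w)=e\iff w\,w_0(D_R(w))=e\iff w=w_0(D_R(w)).
\]

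Next I would record the standard identity $D_R(w_0(J))=J$, valid whenever $W_J$ is finite; this single fact powers the rest of the argument. Recalling that the faces of the nerve $\mathcal N(W,S)$ are exactly the subsets $J\subseteq S$ for which $W_J$ is finite, I define the candidate bijection $\Phi\colon\mathcal N(W,S)\to\Pop_W^{-1}(e)$ by $\Phi(J)=w_0(J)$. To see that $\Phi$ lands in $\Pop_W^{-1}(e)$, take $J\in\mathcal N(W,S)$ and set $w=w_0(J)$; then $D_R(w)=J$, so $w_0(D_R(w))=w_0(J)=w$ and hence $\Pop(w)=w\cdot w=e$ by the involution property. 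For surjectivity, if $\Pop(w)=e$ then the displayed equivalence gives $w=w_0(D_R(w))$, so $w=\Phi(J)$ for $J:=D_R(w)\in\mathcal N(W,S)$. For injectivity, if $w_0(J)=w_0(J')$ then applying $D_R$ and using $D_R(w_0(J))=J$ yields $J=J'$. Thus $\Phi$ is a bijection. Finally, when $W$ is finite every $W_J$ is finite, so $\mathcal N(W,S)$ is the full simplex on $S$ and has $2^{|S|}$ faces, which gives the stated count.

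The argument is short once \eqref{EqCox1} is in hand, so the only substantive input I am citing is the meet computation $\bigwedge_R\{ws:s\in D_R(w)\}=w\,w_0(D_R(w))$ together with the finiteness of $W_{D_R(w)}$. Were \eqref{EqCox1} not yet available, I would derive it from the decomposition $w=w^{J}w_0(J)$ (with $J=D_R(w)$), which shows $\Pop(w)=w^{J}$: each $ws$ with $s\in J$ has $w^{J}$ as a common lower bound in right weak order, and one checks $w^{J}$ is the greatest such lower bound. The main place where genuine work occurs in that direct route—and hence the real obstacle underlying the proposition—is verifying that no simple reflection lies below all of the elements $ws$, which amounts to the fact that $-w_0(J)$ permutes the simple roots indexed by $J$.
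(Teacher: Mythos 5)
Your proposal is correct and follows essentially the same route as the paper: both rest on equation \eqref{EqCox1}, the identity $D_R(w_0(J))=J$, and the bijection $J\mapsto w_0(J)$ between $\mathcal N(W,S)$ and $\Pop_W^{-1}(e)$. The only cosmetic difference is that you justify the finiteness of $W_{D_R(w)}$ via the parabolic coset decomposition, whereas the paper cites the fact that $w$ is an upper bound for $D_R(w)$ in the left weak order so that $\bigvee_L D_R(w)$ exists; both are standard and equivalent.
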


Pudwell and Smith \cite{Pudwell} enumerated $2$-pop-stack-sortable permutations in $S_n$. Our next theorem provides an analogue of this result for the hyperoctahedral groups $B_n$ (defined in Section~\ref{Sec:TypeB}). Descending runs are defined in Section~\ref{Subsec:PopMap}. 

\begin{theorem}\label{ThmCox3} 
For each $n\geq 1$ and $k\geq 0$, the number of $2$-pop-stack-sortable elements of $B_n$ with $2k$ or $2k+1$ descending runs is equal to the number of $2$-pop-stack-sortable permutations in $S_{n+1}$ with exactly $k+1$ descending runs. In particular, \[\sum_{n\geq 1}\left|\Pop_{B_n}^{-2}(e)\right|z^n=\sum_{n\geq 1}\left|\Pop_{S_{n+1}}^{-2}(e)\right|z^n=\frac{2z(1+z+z^2)}{1-2z-z^2-2z^3}.\]
\end{theorem}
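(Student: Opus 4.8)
The plan is to prove the refined, per-$k$ statement first and then obtain the generating-function identity by summing over $k$. Write $\mathrm{dr}(w)$ for the number of descending runs of $w$, and recall the basic reduction: since $\Pop^{-2}(e)=\Pop^{-1}(\Pop^{-1}(e))$, an element $w$ is $2$-pop-stack-sortable exactly when $\Pop(w)$ is $1$-pop-stack-sortable. By Proposition~\ref{PropCox2} the $1$-pop-stack-sortable elements form a transparent family; in type $A$ these are the \emph{layered} permutations (direct sums of decreasing permutations), and in type $B$ they admit an analogous description. The first step is therefore to turn the condition ``$\Pop(w)$ is layered'' into an explicit local condition on the sequence of descending runs of $w$. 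In type $A$ this is clean: writing the descending runs of $w$ as $B_1,\ldots,B_m$, one has $\Pop(w)=B_1^{\mathrm{rev}}\cdots B_m^{\mathrm{rev}}$, so a descent of $\Pop(w)$ occurs precisely at a junction where $\max(B_j)>\min(B_{j+1})$, and demanding that these descents cut $\Pop(w)$ into increasing blocks of consecutive integers yields a finite list of compatibility constraints between adjacent runs. I would record this as a structural description of $\Pop_{S_{n+1}}^{-2}(e)$ (re-deriving the Pudwell--Smith picture \cite{Pudwell}) and then establish the type-$B$ analogue, where the sign descent $s_0$ and the sign of $w(1)$ must be folded into the same bookkeeping.

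Next I would exhibit the bijection realizing Theorem~\ref{ThmCox3}. The governing numerology is that an element $w\in\Pop_{B_n}^{-2}(e)$ should be sent to $v\in\Pop_{S_{n+1}}^{-2}(e)$ with $\mathrm{dr}(v)=\lceil(\mathrm{dr}(w)+1)/2\rceil$, so that both classes $\mathrm{dr}(w)=2k$ and $\mathrm{dr}(w)=2k+1$ land in $\mathrm{dr}(v)=k+1$. This halving of the run count is the signature of a type-$B$-to-type-$A$ folding, and the factor of two is supplied by the sign data: the map should pair up consecutive descending runs of $w$ while absorbing the sign of the leading entry into a new $(n+1)$st coordinate, with the parity of $\mathrm{dr}(w)$ recording the binary sign choice on the initial block. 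I would verify that this map is well defined (it preserves the compatibility constraints of the previous step), that it is a bijection onto the elements of $\Pop_{S_{n+1}}^{-2}(e)$ with $k+1$ descending runs, and that it carries the descending-run statistic as claimed.

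Granting the refined statement, the first equality of generating functions is immediate: for each fixed $n$ the left-hand side counts every element of $\Pop_{B_n}^{-2}(e)$, since each has $\mathrm{dr}(w)\ge 1$ and hence falls into exactly one class $\{2k,2k+1\}$, while the right-hand side counts every element of $\Pop_{S_{n+1}}^{-2}(e)$, since $k+1$ ranges over all attainable run counts as $k\ge 0$. The closed form $\frac{2z(1+z+z^2)}{1-2z-z^2-2z^3}$ is then precisely the Pudwell--Smith enumeration of $2$-pop-stack-sortable permutations in type $A$ \cite{Pudwell}, re-indexed by the shift $n\mapsto n+1$; as a sanity check, its initial coefficients $2,6,16,\ldots$ agree with $|\Pop_{S_2}^{-2}(e)|$, $|\Pop_{S_3}^{-2}(e)|$, $|\Pop_{S_4}^{-2}(e)|$ (all of $S_2$ and $S_3$ are $2$-pop-stack-sortable, and $16$ of the $24$ elements of $S_4$ are).

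The hard part will be the type-$B$ half of the structural characterization together with the verification of the folding bijection. Unlike in type $A$, the dynamics of $\Pop$ on $B_n$ is driven in part by the sign descent $s_0$, so ``$\Pop(w)$ is layered'' must be phrased simultaneously in terms of the numerical order of the entries and their signs, and one must check that the folding interacts correctly with the leading descending run in every case (in particular matching the two parity classes to the single type-$A$ class without over- or under-counting). If a uniform bijection proves too delicate, I would fall back on encoding each side by a finite automaton reading the descending-run decomposition and showing that the type-$B$ run-marking bivariate generating function is obtained from the type-$A$ one by the substitution implementing $q^{2k}+q^{2k+1}\mapsto q^{k+1}$; this transfer-matrix route recovers the same coefficient identity even where an explicit correspondence is cumbersome.
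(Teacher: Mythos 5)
Your overall strategy---reduce $2$-pop-stack-sortability to a local condition on consecutive descending runs (Lemma~\ref{LemCox7} in type $A$), transport it to type $B$, and then match the two refined counts---has the right shape, and your reduction of the generating-function identity to the per-$k$ statement plus the Pudwell--Smith formula is exactly what the paper does. But the step that carries all the content, namely the comparison of the type-$B$ count with the type-$A$ count, is not actually supplied. Your primary route is an explicit ``folding'' bijection pairing consecutive descending runs and absorbing a sign into an $(n+1)$st coordinate; this is asserted, not constructed, and the numerology you give does not by itself produce a well-defined map. The paper's argument shows why this is delicate: it works in the model of $B_n$ as the centrally symmetric permutations in $S_{2n}$ (so that ``descending runs'' means runs of the full $2n$-letter word and $\Pop_{B_n}$ is literally the restriction of $\Pop_{S_{2n}}$), and instead of a direct bijection it peels off the first and last descending runs simultaneously. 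The fibers of that peeling map have size $1$ or $2$ according to whether the peeled element lies in the auxiliary set $M(n-1,k-1)$ of elements whose last run is a singleton, which forces the recurrence $|L(n,k)|=2\sum_{i=1}^{n-1}|L(i,k-1)|-|M(n-1,k-1)|$ of Proposition~\ref{PropCox3}; the theorem then follows because Pudwell and Smith's $a(n,k)$ and $b(n,k)$ satisfy the identical pair of recurrences with the same initial conditions. Any correct bijection must encode this $1$-versus-$2$ fiber dichotomy, and your sketch contains no analogue of the auxiliary statistic (``last run has one entry'') that controls it.

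Two further cautions. First, your bookkeeping in terms of ``the sign descent $s_0$ and the sign of $w(1)$'' suggests the signed-window model of $B_n$; the descending-run statistic in the theorem is the one inherited from the embedding $B_n\subseteq S_{2n}$ (for instance, the identity of $B_1$ has two descending runs, not one), so a signed-window count of runs is a different statistic and would break the per-$k$ statement. Second, your fallback transfer-matrix argument could in principle succeed, but it again requires the refined type-$B$ characterization together with a state space recording the lengths of the outermost runs---i.e., essentially the $L/M$ recursion above---so it does not avoid the missing step.
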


Claesson and Gu{\dh}mundsson proved that for every fixed $t\geq 0$, the generating function that counts $t$-pop-stack-sortable permutations in symmetric groups is rational \cite{ClaessonPop}. Our final main theorems extend this result to the hyperoctahedral groups $B_n$ and the affine symmetric groups $\widetilde S_n$ (see Sections~\ref{Sec:TypeB} and \ref{SecAffine} for the definitions). 

\begin{theorem}\label{ThmCox4}
For every $t\geq 0$, the generating function $\displaystyle \sum_{n\geq 1}\left|\Pop_{B_n}^{-t}(e)\right|z^n$ is rational. 
\end{theorem}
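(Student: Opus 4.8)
The plan is to prove rationality by a transfer-matrix (equivalently, finite-automaton) argument that reads the one-line notation of a signed permutation from left to right while maintaining a bounded amount of state, adapting the type-$A$ method of Claesson and Gu{\dh}mundsson to the signed setting. First I would fix the combinatorial model: realize $B_n$ as the group of signed permutations $w=w(1)\cdots w(n)$, where the $w(i)$ are distinct elements of $\{\pm1,\dots,\pm n\}$ with $\{|w(1)|,\dots,|w(n)|\}=\{1,\dots,n\}$, and I would record the explicit description of $\Pop_{B_n}$ in this model. Here $s_0\in D_R(w)$ iff $w(1)<0$ and $s_i\in D_R(w)$ (for $1\le i<n$) iff $w(i)>w(i+1)$, and one has the clean formula $\Pop_{B_n}(w)=w\cdot w_0(D_R(w))$, where $w_0(J)$ denotes the longest element of the parabolic subgroup $W_J$. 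Unwinding $w_0(D_R(w))$ shows that one step of $\Pop_{B_n}$ simultaneously reverses each maximal descending run, negating the entries of the leading run according to the signed convention governed by $s_0$. A concrete, local-looking description of a single step is what makes a finite-state analysis of the $t$-fold iterate plausible; note also that every $B_n$ is finite, so there is no infinite-group case to dispose of.

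I would then build the enumeration as a generating tree refined by automaton states. Reading positions left to right and, at step $i$, choosing the order-and-sign type of the new entry $w(i)$ relative to $w(1),\dots,w(i-1)$ produces every element of $B_n$ exactly once as a length-$n$ leaf, with the usual multiplicities recording the number of insertion choices. If I attach to each prefix a state from a finite set $Q_t$ that determines whether each completion is $t$-pop-stack-sortable, then the accepted language is regular and the weighted count $\sum_{n\ge1}\bigl|\Pop_{B_n}^{-t}(e)\bigr|z^n$ is an entry of a rational matrix of the form $(I-zM_t)^{-1}$, where $M_t$ is the finite transfer matrix over $Q_t$. Thus the entire theorem reduces to exhibiting a suitable finite state set, i.e. to a Myhill--Nerode--type finiteness statement: prefixes with the same ``future'' with respect to $t$-pop-stack-sortability fall into finitely many classes.

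The crux, and the step I expect to be the main obstacle, is establishing this finiteness. The tension is that a single entry can travel arbitrarily far in one application of $\Pop_{B_n}$ (a long descending run is reversed in a single step), so naive locality fails; what saves the argument is that $\Pop_{B_n}^t(w)=e$ depends only on the descent structure of the iterates $\Pop_{B_n}^j(w)$ for $0\le j\le t$, and this structure is propagated from $w$ by carrying only bounded order-and-sign comparison data across run boundaries. Concretely, I would take the state after reading position $i$ to be the order-and-sign pattern of the boundedly many ``boundary values'' that can still affect comparisons at levels $0,1,\dots,t$ to the right of the cut, together with a flag recording whether an unsortable configuration has already been forced. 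The key claims to verify are: (a) this profile is updated upon reading one more entry using only the stored data; (b) only finitely many profiles arise, which is where $t$ being fixed is essential and where the type-$A$ bounds of Claesson and Gu{\dh}mundsson must be re-proved allowing the extra sign coordinate and the leading $s_0$-descent; and (c) the accept condition correctly detects $\Pop_{B_n}^t(w)=e$. Claim (b) is the heart of the matter: it amounts to a bounded ``range of influence'' lemma for the iterated run-reversal dynamics, asserting that for fixed $t$ the number of boundary comparisons that remain relevant as the scan proceeds stays bounded.

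Once (a)--(c) are in place, $Q_t$ is finite, $M_t$ is a finite nonnegative integer matrix, and the generating function is rational as an entry of $(I-zM_t)^{-1}$, completing the proof. I would expect the type-$B$ bookkeeping (signs at the front, and reversal-with-negation of the leading run) to complicate the verification of (a) and (c) but not to change the structure of the argument; the genuinely new technical content is the signed refinement of the finiteness estimate in (b).
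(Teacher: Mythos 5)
Your proposal is a strategy outline rather than a proof: the entire technical content of the theorem is concentrated in your claim (b), the finiteness of the state set, and you explicitly defer it (``the key claims to verify are\dots'', ``must be re-proved allowing the extra sign coordinate''). That claim is not routine. Tracking ``the order-and-sign pattern of the boundary values that can still affect comparisons at levels $0,\dots,t$'' is not obviously a bounded amount of data: a single application of $\Pop$ reverses a descending run of unbounded length, so an entry's relevant comparisons can involve positions arbitrarily far away, and the number of order-and-sign types of a new entry relative to a prefix grows with the prefix length, so your alphabet is not even finite as stated. The way Claesson and Gu{\dh}mundsson escape this in type $A$ is precisely by \emph{not} scanning values: they replace the permutation by its sorting plan, a word over the fixed finite alphabet $\Sigma_t=\{0,\dots,2^t-1\}$ recording only the run boundaries at each of the $t$ levels, and they prove that sortability is equivalent to avoiding a \emph{finite} set of bounded forbidden factors in that word. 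Without carrying out the analogous reduction in your signed model --- identifying a finite alphabet, proving the local characterization, and proving finiteness of the obstruction set --- the transfer matrix $M_t$ has not been shown to exist.

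The paper takes a different and shorter route that you should compare against: it embeds $B_n$ into $S_{2n}$ as the centrally symmetric permutations, observes that $\Pop_{B_n}$ is the restriction of $\Pop_{S_{2n}}$, and characterizes the resulting sorting plans as exactly the \emph{palindromic} sorting plans of even length. The type-$A$ machinery is then reused as a black box, and the only new work is showing that the language of half-words $\half(\psi(\sigma))$ is regular. This surfaces a genuine type-$B$ subtlety that your proposal does not engage with at all: a forbidden configuration can straddle the center of symmetry, involving an entry and its mirror partner. The paper handles this with an auxiliary finite set $U$ of ``dangerous suffixes'' (words $u$ such that $u\,\rev(u)$ creates a forbidden segment even though neither half does) and proves $U$ reduces to a finite set $U'$ of bounded length. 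In your left-to-right scan of the signed one-line notation, the analogous phenomenon is an interaction between the leading run (which is reversed with negation) and entries deep inside the word; any correct version of your claim (c) must detect it, and nothing in your state description accounts for it. So the gap is twofold: the finiteness lemma at the heart of (b) is asserted but not proved, and the accept condition (c) is incomplete because it ignores obstructions created jointly by an entry and its negative.
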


\begin{theorem}\label{ThmCox5}
For every $t\geq 0$, the generating function $\displaystyle \sum_{n\geq 1}\left|\Pop_{\widetilde S_n}^{-t}(e)\right|z^n$ is rational. 
\end{theorem}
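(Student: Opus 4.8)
The plan is to transport the transfer-matrix philosophy behind the type $A$ result of Claesson and Gu{\dh}mundsson, and behind the type $B$ result of Theorem~\ref{ThmCox4}, to the cyclic combinatorics of the affine symmetric group. First I would fix the standard model of $\widetilde S_n$ as the group of affine permutations, that is, bijections $w\colon\mathbb Z\to\mathbb Z$ with $w(i+n)=w(i)+n$ and $\sum_{i=1}^n\bigl(w(i)-i\bigr)=0$, recorded by the window $[w(1),\dots,w(n)]$. Writing $w(i)=r_i+nq_i$ with $r_1\cdots r_n$ a permutation of $[n]$ and $\sum_i q_i=0$, an affine permutation is a permutation of residues together with a balanced tuple of winding numbers $q_i$. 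The right descents are the cyclic positions $i$ (including the affine seam between positions $n$ and $1$ associated to $s_0$) at which $w(i)>w(i+1)$, and---exactly as in type $A$, via the description of $\Pop$ established in Section~\ref{Subsec:PopMap}---the operator $\Pop$ acts by reversing each maximal \emph{cyclic} descending run of $w$.

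The second step is a structural lemma bounding the local complexity of $t$-pop-stack-sortable elements. Every $w\in\Pop_{\widetilde S_n}^{-t}(e)$ is a product of at most $t$ longest elements of proper, hence finite, standard parabolic subgroups, by the Brieskorn normal form relation recorded after Theorem~\ref{ThmCox1}; in particular $\ell(w)\leq t\binom n2$, so $\Pop_{\widetilde S_n}^{-t}(e)$ is finite for each $n$. Using the multiplicativity of winding numbers (from $w=uv$ one gets $q^w_i=q^u_{r_i}+q^v_i$, hence $|q^w_i|$ is subadditive position-by-position), and the fact that a single parabolic longest element contributes $|q_i|\leq 1$ at each position, I would show that every individual winding number satisfies $|q_i|\leq t$, so the windings range over the \emph{bounded} alphabet $\{-t,\dots,t\}$ even though $\sum_i|q_i|$ may grow with $n$. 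The essential claim I must then prove is that the predicate $\Pop^t(w)=e$ is \emph{cyclically local}: whether $t$ successive cyclic run-reversals return $w$ to the identity can be decided from the window through a bounded-radius relative-order profile at each position together with the bounded winding data, independently of $n$ and of the (possibly long) lengths of the individual runs.

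Granting this, the final step encodes the relevant affine permutations as closed walks in a fixed finite directed graph. I would scan the window cyclically and let the state at each step carry the bounded local data---the relative order of nearby entries, any pending run fragment, and the bounded windings---needed both to reconstruct $w$ and to simulate the $t$ iterations of $\Pop$ and test termination at $e$; a long monotone run keeps the scan in a steady loop, which is precisely what the geometric-series structure of the transfer matrix absorbs. Because the Coxeter diagram of $\widetilde A_{n-1}$ is an $n$-cycle, such a $w$ corresponds to a walk that closes up after $n$ steps, so $\bigl|\Pop_{\widetilde S_n}^{-t}(e)\bigr|$ equals $\operatorname{tr}(M^n)$ for a fixed transfer matrix $M$, up to a bounded correction enforcing the global balance $\sum_i q_i=0$ and the choice of basepoint. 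Since
\[
\sum_{n\geq1}\operatorname{tr}(M^n)\,z^n=\sum_{\lambda}\frac{\lambda z}{1-\lambda z},
\]
with the sum running over the eigenvalues of $M$ counted with multiplicity, this is a rational function of $z$, and rationality of $\sum_{n\geq1}\bigl|\Pop_{\widetilde S_n}^{-t}(e)\bigr|z^n$ follows.

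The main obstacle is the cyclic-locality lemma of the second step. In the linear types $A$ and $B$ one reads the one-line notation left to right in a single pass, and the wrap-around interaction that $s_0$ introduces in $\widetilde S_n$ is simply absent; here a cyclic descending run, or the accumulated effect of $t$ reversals, could a priori propagate across the seam and couple positions on the scale of $n$. The heart of the argument is therefore to quantify how far the influence of a descent can travel under $t$ applications of $\Pop$, to show this distance is bounded in terms of $t$ alone, and to verify that the steady-state behavior along long runs together with the interaction at the seam is captured by finitely many states---so that the finite graph genuinely suffices and the trace formalism applies.
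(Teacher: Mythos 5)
Your overall strategy---encode the window of a $t$-pop-stack-sortable affine permutation as a closed walk in a fixed finite digraph and extract rationality from a transfer matrix---is the same finite-state philosophy the paper follows, and your preliminary observations (each winding number satisfies $|q_i|\leq t$; each $\Pop_{\widetilde S_n}^{-t}(e)$ is finite) are correct. But there is a genuine gap exactly where you flag one: the ``cyclic-locality lemma'' is announced, not proved, and it is the entire mathematical content of the theorem. Everything after ``Granting this'' is routine automaton bookkeeping; the theorem \emph{is} the claim that the predicate $\Pop^t(w)=e$ can be decided by bounded-radius local data that does not propagate across the seam on the scale of $n$. The paper discharges precisely this claim with three concrete ingredients your sketch does not supply: Lemma~\ref{LemCox9}, showing that every descending run of $\Pop(w)$ has length at most $3$ (so after one iteration the run structure already has bounded local complexity, even though the runs of $w$ itself may be long); Lemma~\ref{LemFiniteBFS}, imported from Claesson--Gu{\dh}mundsson, showing that the obstructions to a periodic array of blocks being a genuine sorting plan form a \emph{finite} list of bounded forbidden segments; and Lemma~\ref{LemCox10}, which assembles these into a characterization of affine sorting plans by purely local conditions ($n$-periodic, non-Escher, blocks off the first row of length at most $3$, avoidance of the finite list). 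Without an argument of this kind you have no control over how far the influence of a descent travels under $t$ iterations, which is the obstacle you yourself identify as unresolved.

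Two secondary points. First, the paper does not use the trace formula at all: it encodes each affine sorting plan as a single word of length $n$ over $\Sigma_t$ and shows the resulting language equals $\bigcap_{k=0}^{K-1}\cyc^k(Y)$ for a regular language $Y$ cut out by the local conditions \eqref{Aff1}--\eqref{Aff4}, then invokes closure of regular languages under cyclic shift (Lemma~\ref{LemCox11}) and intersection. This sidesteps the delicate bookkeeping that $\operatorname{tr}(M^n)$ would require (marked basepoints, and the non-Escher condition, which is existential rather than factor-avoiding). Second, your plan to enforce the balance condition $\sum_i q_i=0$ by a ``bounded correction'' is a trap: the partial sums $\sum_{j\leq i}q_j$ are not bounded independently of $n$, so they cannot live in a finite state space. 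The paper avoids this issue entirely by working with sorting plans rather than winding numbers: the affine permutation is reconstructed from its plan via the semitrace construction, which starts from the identity and repeatedly multiplies by layered affine permutations, so condition \eqref{EqCox2} holds automatically and never has to be tracked.
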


\subsection{Generalizations}\label{Subsec:Further}

There are (at least) two natural ways in which one could generalize the definition of the Coxeter pop-stack-sorting operators. 

First, one could replace the right weak order on a Coxeter group with an arbitrary complete meet-semilattice $M$ to obtain a map $\Pop_M:M\to M$ defined by \[\Pop_M(x)=\bigwedge(\{y\in M:y\lessdot x\}\cup\{x\})\] for all $x\in M$. This provides a large new class of noninvertible combinatorial dynamical systems that are ripe for investigation.\footnote{Henri M\"uhle has informed the author that $\Pop_M(x)$ is the same as what he has called the \emph{nucleus} of $x$ \cite{Muhle1, Muhle2}. However, his results do not overlap with ours because he did not view nuclei as defining a dynamical system. Instead, he was interested in the lattice-theoretic properties of the interval between $\Pop_M(x)$ and $x$ in $M$.} We will explore this avenue in \cite{DefantMeeting}, with special emphasis on $\nu$-Tamari lattices.   

\begin{remark}\label{RemCox1}
In light of Definition~\ref{DefCox1}, it is natural to define a map $f:M\to M$ on an arbitrary complete meet-semilattice $M$ to be \dfn{compulsive} if $f(x)\leq \Pop_M(x)$ for all $x\in M$. Together, Theorems~\ref{ThmCox1} and \ref{ThmCox2} tell us that if $M$ is isomorphic to the right weak order on an irreducible Coxeter group and $f:M\to M$ is compulsive, then $\sup\limits_{x\in M}\left| O_f(x)\right|\leq \sup\limits_{x\in M}\left| O_{\Pop_M}(x)\right|$. This does \emph{not} hold if $M$ is replaced by an arbitrary complete meet-semilattice. For example, consider the lattice $M$ whose Hasse diagram is shown in Figure~\ref{FigCox2}. If $f:M\to M$ is the compulsive map illustrated with green arrows, then $\sup\limits_{x\in M}\left| O_f(x)\right|=4>3=\sup\limits_{x\in M}\left| O_{\Pop_M}(x)\right|$. 
\end{remark}

\begin{figure}[ht]
  \begin{center}{\includegraphics[height=3cm]{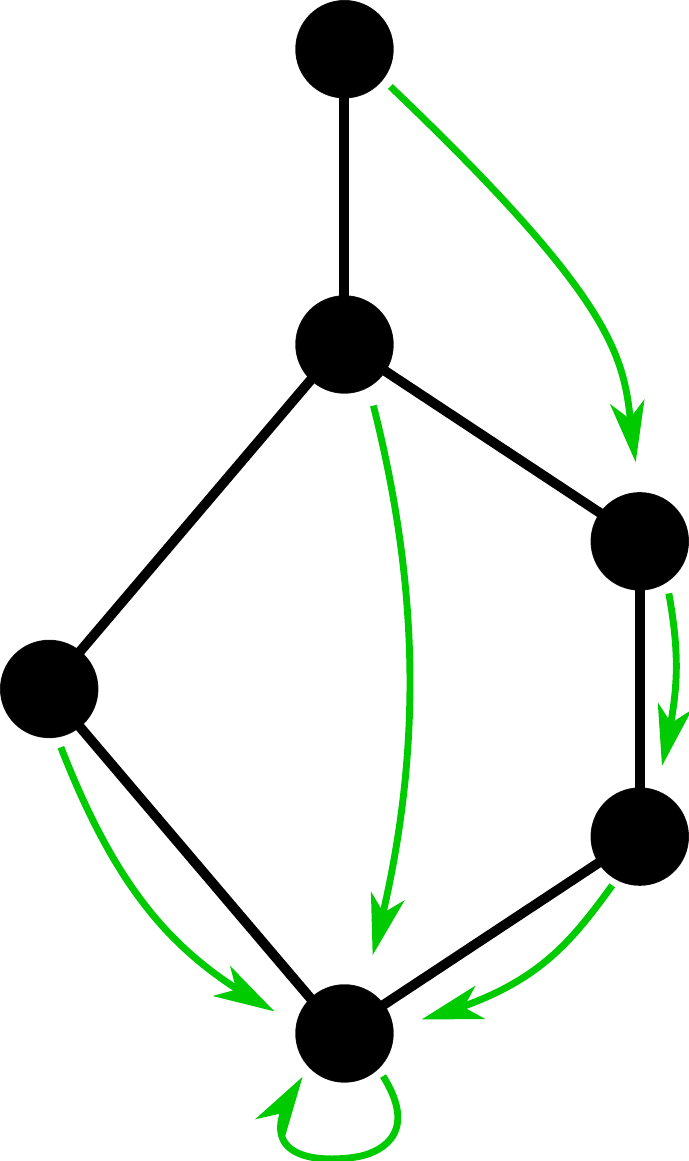}}
  \end{center}
  \caption{A compulsive map on a lattice.}\label{FigCox2}
\end{figure}

For the second generalization, we make use of the left weak order $\leq_L$ on a Coxeter group $W$. The poset $(W,\leq_L)$ is a complete meet-semilattice that is isomorphic to $(W,\leq_R)$. Let $\wedge_L$ denote the meet operation in the left weak order. A \dfn{semilattice congruence} on $(W,\leq_L)$ is an equivalence relation $\equiv$ on $W$ that respects meets, meaning that $x_1\equiv x_2$ and $y_1\equiv y_2$ together imply $(x_1\wedge_L y_1)\equiv (x_2\wedge_L y_2)$. When $W$ is finite, we define a \dfn{lattice congruence} on $(W,\leq_L)$ to be an equivalence relation on $W$ that respects both meets and joins, meaning $x_1\equiv x_2$ and $y_1\equiv y_2$ together imply $(x_1\wedge_L y_1)\equiv (x_2\wedge_L y_2)$ and $(x_1\vee_L y_1)\equiv (x_2\vee_L y_2)$. Semilattice congruences and lattice congruences on weak orders of Coxeter groups have been studied extensively (see \cite{Pilaud, Pilaud2, Pilaud3, Law, Giraudo, Hoang, ReadingCambrian, ReadingLattice2, ReadingFinite, ReadingSpeyerSortable, ReadingSpeyerCambrian, ReadingSpeyerFrameworks} and the references therein). 

One of the most natural examples of a semilattice congruence on $(W,\leq_L)$ is the \dfn{descent congruence}, which is defined by declaring two elements of $W$ to be equivalent if they have the same right descent set. Another notable example is the \dfn{sylvester congruence} on the symmetric group $S_n$. There is a well-known bijection $\mathcal I$ from the set of decreasing binary plane trees with label set $[n]$ to $S_n$; the sylvester congruence is defined by declaring two permutations $\sigma$ and $\sigma'$ to be equivalent if the unlabeled binary plane trees obtained by removing the labels from $\mathcal I^{-1}(\sigma)$ and $\mathcal I^{-1}(\sigma')$ are equal (see \cite{DefantPolyurethane, Hivert, Pilaud} for more details). Further interesting examples of semilattice congruences on Coxeter groups are provided by the Cambrian congruences (see \cite{ReadingCambrian, ReadingFinite, ReadingSpeyerCambrian, ReadingSpeyerSortable} and the references therein). In the case of $S_n$, other notable examples of lattice congruences include the permutree congruences \cite{Pilaud}, the $k$-twist congruences \cite{Pilaud3}, and the Baxter congruence \cite{Law, Giraudo}. 

If $\equiv$ is a semilattice congruence on the left weak order of $W$, then every congruence class of $\equiv$ has a unique minimal element. We denote by $\pi_\downarrow:W\to W$ the projection map that sends each element of $W$ to the unique minimal element of its congruence class. We say $\equiv$ is \dfn{essential} if the identity element $e\in W$ is in a singleton equivalence class.

\begin{definition}
Let $W$ be a Coxeter group, and let $\equiv$ be an essential semilattice congruence on the left weak order of $W$. Define the \dfn{Coxeter stack-sorting operator} ${\bf S}_\equiv:W\to W$ to be the map given by ${\bf S}_\equiv(w)=w(\pi_\downarrow(w))^{-1}$ for all $w\in W$.   
\end{definition}

The name \emph{Coxeter stack-sorting operator} is motivated by two special cases. First, if $\equiv$ is the descent congruence, then ${\bf S}_\equiv$ is the same as the Coxeter pop-stack-sorting operator $\Pop_W$ (we justify this claim at the end of Section~\ref{Subsec:Coxeter}). Second, if $W=S_n$ and $\equiv$ is the sylvester congruence, then ${\bf S}_\equiv$ is West's stack-sorting map (this is essentially the content of \cite[Corollary~16]{DefantPolyurethane}, where the projection map $\pi_\downarrow$ goes by the name $\text{swd}$). Coxeter stack-sorting operators provide another broad new class of non-invertible combinatorial dynamical systems; we will investigate them more extensively in \cite{DefantCoxeterStack}, with special emphasis on permutree congruences in type~$A$ and their analogues (in particular, analogues of the sylvester congruence) in types~$B$ and~$\widetilde A$. 

\begin{remark}
A semilattice congruence on the left weak order of a Coxeter group is essential if and only if it refines the descent congruence (see \cite{Hoang, DefantCoxeterStack}). From this, it is not difficult to show that all Coxeter stack-sorting operators are compulsive, so they provide a wide range of maps to which one can apply Theorem~\ref{ThmCox2} (see \cite{DefantCoxeterStack}).  
\end{remark}

\begin{remark}
A different generalization of West's stack-sorting map, introduced in \cite{Cerbai}, uses \emph{pattern-avoiding stacks}; this notion has attracted much attention in recent years \cite{Baril, Berlow, Cerbai3, Cerbai2, DefantZheng}. While these pattern-avoiding stacks are certainly interesting, we believe our Coxeter stack-sorting operators are more natural from an algebraic and lattice-theoretic point of view. 
\end{remark}

\subsection{Outline}
In Section~\ref{Sec:Preliminaries}, we recall relevant definitions and facts concerning Coxeter groups, prove Proposition~\ref{PropCox2}, and verify that the Coxeter pop-stack-sorting operator on $S_n$ coincides with the pop-stack-sorting map. Section~\ref{SecMaximal} is devoted to proving Theorems~\ref{ThmCox1} and \ref{ThmCox2}. We prove Theorems~\ref{ThmCox3} and \ref{ThmCox4}, which concern Coxeter groups of type $B$, in Section~\ref{ThmCox4}. We prove Theorem~\ref{ThmCox5}, which concerns Coxeter groups of type $\widetilde A$, in Section~\ref{SecAffine}. Section~\ref{SecConclusion} provides additional potential ideas for future work.

\section{Preliminaries}\label{Sec:Preliminaries}

\subsection{Coxeter Groups}\label{Subsec:Coxeter}

We assume familiarity with basic notions and concepts from the combinatorial theory of Coxeter groups and from lattice theory; a standard reference that contains all the background information we need is \cite{BjornerBrenti}.

A \dfn{Coxeter system} is a pair $(W,S)$, where $W$ is a group generated by the set $S$ with presentation $W=\langle S:(ss')^{m(s,s')}=e\rangle$ such that $m(s,s)=1$ for all $s\in S$ and $m(s,s')=m(s',s)\in\{2,3,\ldots\}\cup\{\infty\}$ for all distinct $s,s'\in S$. The elements of $S$ are called the \dfn{simple generators}. We will often refer to a Coxeter group $W$ with the understanding that we are really referring to a Coxeter system $(W,S)$ for some specific choice of a generating set $S$. The \dfn{Coxeter diagram} of $W$ is the graph $\Gamma(W)$ with vertex set $S$ in which two vertices $s,s'$ are connected by an edge labeled with $m(s,s')$ if $m(s,s')\geq 3$ (and are not adjacent if $m(s,s')\leq 2$). Notice that $s$ and $s'$ commute if and only if they are not adjacent in $\Gamma(W)$. We say $W$ is \dfn{irreducible} if $\Gamma(W)$ is a connected graph. 

A \dfn{reduced word} for an element $w\in W$ is a word $s_1\cdots s_k$ over the alphabet $S$ that, when viewed as a product of elements of $W$, equals $w$. The smallest length of a reduced word for $w$ is called the \dfn{length} of $w$ and is denoted by $\ell(w)$. The \dfn{left weak order} on $W$ is the partial order $\leq_L$ on $W$ defined by saying $x\leq_L y$ if $\ell(yx^{-1})=\ell(y)-\ell(x)$. The \dfn{right weak order} on $W$ is the partial order $\leq_R$ on $W$ defined by saying $x\leq_R y$ if $\ell(x^{-1}y)=\ell(y)-\ell(x)$. The map $W\to W$ given by $w\mapsto w^{-1}$ is an isomorphism from the left weak order to the right weak order. We will also need the \dfn{strong Bruhat order} on $W$, which is the partial order $\leq$ on $W$ defined by saying that $x\leq y$ if some (equivalently, every) reduced word for $y$ contains a reduced word for $x$ as a (not necessarily contiguous) subword. If $x\leq_R y$ or $x\leq_L y$, then $x\leq y$. 

The poset $(W,\leq_R)$ is a complete meet-semilattice \cite{Bjorner, BjornerBrenti}, meaning that every set $A\subseteq W$ has a unique meet $\bigwedge_R A$. If $A$ has an upper bound in $(W,\leq_R)$, then it has a join, which we denote by $\bigvee_R A$. Similarly, we write $\bigwedge_L A$ and $\bigvee_L A$ for, respectively, the meet of $A$ and the join of $A$ (if it exists) in the left weak order. The right and left weak orders on $W$ are lattices if $W$ is finite. 

A \dfn{right descent} of an element $w\in W$ is a simple generator $s\in S$ such that $\ell(ws)<\ell(w)$; the collection of all right descents of $w$ is the \dfn{right descent set} of $W$, which we denote by $D_R(w)$. Similarly, the \dfn{left descent set} of $w$ is the set $D_L(w)=\{s\in S:\ell(sw)<\ell(w)\}$ of \dfn{left descents} of $w$. It is a basic fact that $D_R(w)=\{s\in S:s\leq_L w\}$ and $D_L(w)=\{s\in S:s\leq_R w\}$. 

Suppose $J\subseteq S$. The \dfn{parabolic subgroup} $W_J$ is the subgroup of $W$ generated by the elements of $J$. For every $w\in W$, the right coset $W_Jw$ has a unique representative $\prescript{J}{}\!w$ of minimal length. Let $w_J=w(\prescript{J}{}\!w)^{-1}$ so that $w_J\in W_J$. The factorization $w=w_J\prescript{J}{}\!w$ is length-additive in the sense that $\ell(w)=\ell(w_J)+\ell(\prescript{J}{}\!w)$. The set $\prescript{J}{}\!W=\{\prescript{J}{}\!w:w\in W\}$ is called a \dfn{parabolic quotient}. The next lemma states that for any $J\subseteq S$, the map $w\mapsto\prescript{J}{}\!w$ is order-preserving with respect to the right weak order. 

\begin{lemma}\label{LemCox3}
Let $(W,S)$ be a Coxeter system, and let $J\subseteq S$. If $y,z\in W$ are such that $y\leq_R z$, then $\prescript{J}{}\!y\leq_R\prescript{J}{}\!z$.
\end{lemma}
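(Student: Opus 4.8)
The plan is to reduce the claim to the case where $y \lessdot_R z$ is a covering relation in the right weak order, and then to analyze how the projection $w \mapsto \prescript{J}{}\!w$ behaves under right multiplication by a single simple generator. Since $\leq_R$ is the transitive closure of its covering relations (the weak order is graded by $\ell$, with covers of the form $w \lessdot_R ws$ for $s \in S$ and $\ell(ws)=\ell(w)+1$), and since $\leq_R$ is transitive, it suffices to prove the following: if $z = ys$ with $s\in S$ and $\ell(z)=\ell(y)+1$, then $\prescript{J}{}\!y \leq_R \prescript{J}{}\!z$.

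So I would fix such a cover and set $u = \prescript{J}{}\!y$. Because $W_J y = W_J u$, we have $W_J z = W_J ys = W_J us$, so $\prescript{J}{}\!z$ is the minimal-length representative of the coset $W_J us$. The first key step is to show that $\ell(us) = \ell(u)+1$. Writing $y = y_J u$ with its length-additive factorization $\ell(y)=\ell(y_J)+\ell(u)$, if we had $\ell(us)<\ell(u)$ then $\ell(z)=\ell(y_J us)\leq \ell(y_J)+\ell(us)\leq \ell(y_J)+\ell(u)-1=\ell(y)-1$, contradicting $\ell(z)=\ell(y)+1$. Hence $\ell(us)=\ell(u)+1$, i.e. $u \lessdot_R us$.

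The second key step identifies $\prescript{J}{}\!z = \prescript{J}{}\!(us)$. Here I would invoke the standard coset dichotomy (Deodhar's lemma, in the form for the right quotient $\prescript{J}{}\!W$; see \cite{BjornerBrenti}): for $u \in \prescript{J}{}\!W$ and $s\in S$, either $us \in \prescript{J}{}\!W$, or $us = s'u$ for some $s'\in J$. In the first case $\prescript{J}{}\!(us)=us$, and since $u \lessdot_R us$ we get $\prescript{J}{}\!y = u \leq_R us = \prescript{J}{}\!z$. In the second case $us \in W_J u$, so $W_J us = W_J u$ and $\prescript{J}{}\!(us)=u$; thus $\prescript{J}{}\!y = u = \prescript{J}{}\!z$. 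In both cases $\prescript{J}{}\!y \leq_R \prescript{J}{}\!z$, which completes the cover case and hence the lemma.

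I expect the only subtle point to be the second step: one must ensure the coset representative cannot jump downward. This is exactly what the length computation in the first step prevents, since it rules out the scenario $\ell(us)=\ell(u)-1$, in which $us$ would be a minimal representative lying strictly below $u$ in $\leq_R$. (One can see independently that $\prescript{J}{}\!W$ is a down-set in the right weak order, using the stated fact $D_L(v)=\{s\in S: s\leq_R v\}$: if $v\leq_R u$ then transitivity of $\leq_R$ gives $D_L(v)\subseteq D_L(u)$, so $D_L(u)\cap J=\emptyset$ forces $D_L(v)\cap J=\emptyset$ and hence $v\in\prescript{J}{}\!W$.) Once $\ell(us)=\ell(u)+1$ is secured, the dichotomy leaves only the two benign cases above, so the argument goes through without further obstruction.
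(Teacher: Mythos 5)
Your proposal is correct and follows essentially the same route as the paper: reduce to a cover $z=ys$ in the right weak order, apply Deodhar's lemma to split into the two cases $\prescript{J}{}\!ys\in\prescript{J}{}\!W$ and $\prescript{J}{}\!ys=s'\prescript{J}{}\!y$, and conclude in each. The only (harmless) difference is how you verify $\ell(\prescript{J}{}\!y\,s)=\ell(\prescript{J}{}\!y)+1$ — you use the length-additive factorization and a triangle-inequality contradiction, while the paper deduces $s\notin D_R(\prescript{J}{}\!y)$ from $\prescript{J}{}\!y\leq_L y$ and the characterization of right descents via the left weak order.
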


\begin{proof}
It suffices to prove this in the case when $z=ys$ for some $s\in S\setminus D_R(y)$. Deodhar's Lemma (see, for example, \cite[Lemma~2.1.2]{Geck}) tells us that either $\prescript{J}{}\!ys\in\prescript{J}{}\!W$ or $\prescript{J}{}\!y s= s'\prescript{J}{}\!y$ for some $s'\in J$. Assume first that $\prescript{J}{}\!ys\in\prescript{J}{}\!W$. Then $z=ys=y_J(\prescript{J}{}\!ys)$, so $z_J=y_J$ and $\prescript{J}{}\!z=\prescript{J}{}\!ys$. Since $s\not\in D_R(y)$ and $\prescript{J}{}\!y\leq_L y$, we must have $s\not\in D_R(\prescript{J}{}\!y)$. Hence, $\prescript{J}{}\!y\leq_R \prescript{J}{}\!ys=\prescript{J}{}\!z$. Now suppose $\prescript{J}{}\!ys\not\in \prescript{J}{}\!W$.  Then $W_J\prescript{J}{}\!ys=W_Js'\prescript{J}{}\!y=W_J\prescript{J}{}\!y$, so $z=y_J(\prescript{J}{}\!ys)\in W_J\prescript{J}{}\!ys=W_J\prescript{J}{}\!y$. It follows that $\prescript{J}{}\!z=\prescript{J}{}\!y$. 
\end{proof}

If $W$ is finite, we write $w_0$ for the longest element of $W$. The \dfn{nerve} of $(W,S)$, denoted $\mathcal N(W,S)$, is the collection of subsets $J\subseteq S$ such that $W_J$ is finite. If $J\in\mathcal N(W,S)$, we write $w_0(J)$ for the longest element of $W_J$. The elements $w_0$ and $w_0(J)$ are involutions. It follows from \cite[Lemma~3.2.4]{BjornerBrenti} that \begin{equation}\label{EqCox1}
\Pop(w)=ww_0(D_R(w))
\end{equation} for every $w\in W$. 
According to \cite[Lemma~3.2.3]{BjornerBrenti}, $W_J$ is finite if and only if $\bigvee_L J$ exists; moreover, if $W_J$ is finite, then $\bigvee_L J=w_0(J)$. Since each $w\in W$ is an upper bound for $D_R(w)$ in the left weak order, the join $\bigvee_L D_R(w)$ must exist and equal $w_0(D_R(w))$. 

\begin{proof}[Proof of Proposition~\ref{PropCox2}]
If $J\in\mathcal N(W,S)$, then $D_R(w_0(J))=J$ since $w_0(J)=\bigvee_LJ$. For every $J\in\mathcal N(W,S)$, it follows from \eqref{EqCox1} that $\Pop(w_0(J))=(w_0(J))^2=e$, so $w_0(J)$ is $1$-pop-stack-sortable. On the other hand, if $v\in W$ is $1$-pop-stack-sortable, then \eqref{EqCox1} implies that $v=w_0(D_R(v))$, and we know that $D_R(v)\in\mathcal N(W,S)$ because $v$ is an upper bound for $D_R(v)$ in the left weak order. Hence, we have a surjective map $\mathcal N(W,S)\to\Pop_W^{-1}(e)$ given by $J\mapsto w_0(J)$. The identity $D_R(w_0(J))=J$ implies that this map is actually a bijection.  
\end{proof}

Fix $w\in W$. If $v\in W$ is such that $D_R(v)=D_R(w)$, then it follows from the above discussion that $w_0(D_R(w))=w_0(D_R(v))\leq_L v$. The right descent set of $w_0(D_R(w))$ is $D_R(w)$, so $w_0(D_R(w))$ is the smallest element in the left weak order that has the same right descent set as $w$. Since the element $w_0(D_R(w))$ is an involution, this explains why, as mentioned in Section~\ref{Subsec:Further}, the Coxeter stack-sorting operator corresponding to the descent congruence on $(W,\leq_L)$ agrees with the Coxeter pop-stack-sorting operator $\Pop_W$.

\subsection{The Pop-Stack-Sorting Map}\label{Subsec:PopMap}

The prototypical example of a Coxeter group is the symmetric group $S_n$. The set of simple generators for $S_n$ is $S=\{s_1,\ldots,s_{n-1}\}$, where $s_i$ is the transposition that swaps $i$ and $i+1$. We often write a permutation $w\in S_n$ as a word $w(1)\cdots w(n)$ in one-line notation. A simple transposition $s_i$ is a right descent of $w$ if and only if $w(i)>w(i+1)$. An \dfn{inversion} of $w$ is a pair $(i,j)$ such that $1\leq i<j\leq n$ and $w(i)>w(j)$. It is well known that the number of inversions of $w$ is $\ell(w)$. The longest element of $S_n$, which we denote by $w_0(S_n)$ when we wish to stress the dependence on $n$, is the decreasing permutation $n(n-1)\cdots 321$. 

The \dfn{direct sum} of permutations $u\in S_m$ and $v\in S_n$ is the permutation $u\oplus v\in S_{m+n}$ defined by \[(u\oplus v)(i)=\begin{cases} u(i) & \mbox{if } 1\leq i\leq m; \\ v(i-m)+m & \mbox{if } m+1\leq i\leq m+n. \end{cases}\] A permutation is called \dfn{layered} if it can be written as $w_0(S_{n_1})\oplus\cdots\oplus w_0(S_{n_k})$, a direct sum of decreasing permutations. Note that layered permutations are involutions. A permutation $w$ is layered if and only if $w=w_0(D_R(w))$. 

A \dfn{descending run} of a permutation $w\in S_n$ is a maximal consecutive decreasing subsequence of $w$. For instance, the descending runs of $42135867$ are $421$, $3$, $5$, $86$, and $7$. The \dfn{pop-stack-sorting map} is the operator on $S_n$ that acts by reversing the descending runs of a permutation while keeping entries in different descending runs in the same relative order. For example, the pop-stack-sorting map sends $42135867$ to $12435687$. Equivalently, the pop-stack-sorting map acts by multiplying a permutation $w$ on the right by the unique layered permutation that has the same descent set as $w$. In other words, it sends $w$ to $ww_0(D_R(w))$. In the above example, we have $w_0(D_R(42135867))=w_0(\{s_1,s_2,s_6\})=32145768$, and we saw that the pop-stack-sorting map sends $42135867$ to the permutation $12435687=42135867\cdot32145768$. Hence, \eqref{EqCox1} tells us that the Coxeter pop-stack-sorting operator $\Pop:S_n\to S_n$ agrees with the pop-stack-sorting map.  

\section{Maximum Forward Orbit Sizes}\label{SecMaximal}

The goal of this section is to prove Theorems~\ref{ThmCox1} and \ref{ThmCox2}. Our first step will be to establish Theorem~\ref{ThmCox1} for finite dihedral groups. 

The dihedral group $I_2(m)$ has presentation $\langle s_1,s_2:s_1^2=s_2^2=(s_1s_2)^m=e\rangle$. For $1\leq k\leq m$, let $\alpha_k$ denote the element $\cdots s_1s_2s_1$ of $I_2(m)$ obtained by multiplying the sequence of $k$ simple generators that alternates between $s_1$ and $s_2$ and ends in $s_1$. Similarly, let $\beta_k=\cdots s_2s_1s_2$ be the product of $k$ simple generators that alternate between $s_1$ and $s_2$ and end in $s_2$. Then $I_2(m)=\{e\}\cup\{\alpha_j:1\leq j\leq m\}\cup\{\beta_k:1\leq k\leq m-1\}$. We have $D_R(e)=\emptyset$ and $D_R(\alpha_m)=\{s_1,s_2\}$. For $1\leq k\leq m-1$, we have $D_R(\alpha_k)=\{s_1\}$ and $D_R(\beta_k)=\{s_2\}$. Thus, by \eqref{EqCox1}, the map $\Pop:I_2(m)\to I_2(m)$ is given explicitly by $\Pop(e)=\Pop(\alpha_m)=\Pop(s_1)=\Pop(s_2)=e$, $\Pop(\alpha_j)=\beta_{j-1}$ for all $2\leq j\leq m-1$, and $\Pop(\beta_k)=\alpha_{k-1}$ for all $2\leq k\leq m-1$. Therefore, the elements of $I_2(m)$ whose forward orbits under $\Pop$ are of maximum size are $\alpha_{m-1}$ and $\beta_{m-1}$. Indeed, $\left|O_{\Pop}(\alpha_{m-1})\right|=\left|O_{\Pop}(\beta_{m-1})\right|=m$. The Coxeter number of $I_2(m)$ is $m$, so this proves Theorem~\ref{ThmCox1} for finite dihedral groups.   

The Coxeter numbers of finite irreducible Coxeter groups are all known \cite[Section~3.18]{Humphreys}; in particular, the only finite irreducible Coxeter groups whose Coxeter numbers are odd are the symmetric groups $S_n$ with $n$ odd and the dihedral groups $I_2(m)$ with $m$ odd. Ungar already proved Theorem~\ref{ThmCox1} for symmetric groups, and we just established this theorem for dihedral groups. Thus, we have the following lemma, which shows that we can focus much of our attention hereafter on finite irreducible Coxeter groups with even Coxeter numbers.  

\begin{lemma}\label{LemCox6}
If $W$ is a finite irreducible Coxeter group whose Coxeter number $h$ is odd, then \[\max\limits_{w\in W}\left|O_{\Pop}(w)\right|=h.\] 
\end{lemma}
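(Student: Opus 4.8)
The plan is to reduce the statement to a short case analysis by exploiting the fact that the hypothesis ``$h$ is odd'' is extremely restrictive for finite irreducible Coxeter groups. Since $W$ is finite, the supremum in Theorem~\ref{ThmCox1} is attained, so it suffices to pin down the two families of groups to which the hypothesis applies and then quote results that are already available. First I would invoke the classification of finite irreducible Coxeter groups together with the tabulated values of their Coxeter numbers \cite[Section~3.18]{Humphreys} and check parity family by family: the symmetric groups $A_n\cong S_{n+1}$ have $h=n+1$; the groups $B_n$ and $D_n$ have $h=2n$ and $h=2n-2$, both even; the exceptional groups $E_6,E_7,E_8,F_4,H_3,H_4$ have Coxeter numbers $12,18,30,12,10,30$, all even; and the dihedral groups $I_2(m)$ have $h=m$. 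Reading off this table, the Coxeter number is odd precisely for the symmetric groups $S_n$ with $n$ odd and the dihedral groups $I_2(m)$ with $m$ odd.

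With this dichotomy in hand, the remaining two cases have already been settled in the preceding discussion. For the symmetric groups, Ungar's theorem asserts exactly that $\max\limits_{w\in S_n}\left|O_{\Pop}(w)\right|=n$, and since $n$ is the Coxeter number of $S_n$, this is the desired conclusion. For the dihedral groups, the explicit formula for $\Pop:I_2(m)\to I_2(m)$ derived just above shows that the forward orbits of maximal size are those of $\alpha_{m-1}$ and $\beta_{m-1}$, each of size $m$, which again equals the Coxeter number $m$ of $I_2(m)$. Combining the two cases yields $\max\limits_{w\in W}\left|O_{\Pop}(w)\right|=h$ whenever $h$ is odd, completing the proof.

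I do not anticipate any genuine obstacle here: the real content of the theorem for these two families was supplied by Ungar (for symmetric groups) and by the direct dihedral computation in the text. The only point demanding care is the verification that no finite irreducible Coxeter group outside the two families $S_n$ ($n$ odd) and $I_2(m)$ ($m$ odd) can have an odd Coxeter number, which is precisely the parity check described above. Because every remaining type has an even Coxeter number, there is nothing further to prove, and the lemma serves mainly to isolate the odd-$h$ situation so that the subsequent arguments may assume $h$ is even.
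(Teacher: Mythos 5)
Your proposal is correct and follows essentially the same route as the paper: the text preceding the lemma computes $\Pop$ on $I_2(m)$ explicitly to get maximum orbit size $m$, cites Ungar for the symmetric groups, and then observes via the classification in \cite[Section~3.18]{Humphreys} that these two families exhaust the finite irreducible Coxeter groups with odd Coxeter number. Your family-by-family parity check is the same verification the paper performs implicitly, so there is nothing to add.
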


The Coxeter diagrams of the finite irreducible Coxeter groups $W$ have been classified (see \cite[Appendix~A1]{BjornerBrenti}); in particular, every such diagram is a tree. This means that it is possible to choose a bipartition $X\sqcup Y$ of the vertex set $S$ of $\Gamma(W)$ (i.e., every edge in $\Gamma(W)$ has one endpoint in $X$ and one endpoint in $Y$). The elements of $X$ all commute with each other, and the elements of $Y$ all commute with each other. The map $W\to W$ given by $w\mapsto w_0ww_0$ is an automorphism of $W$ such that $w_0Sw_0=S$ \cite[Section~2.3]{BjornerBrenti}. In particular, this map induces an automorphism of $\Gamma(W)$. 

\begin{lemma}\label{LemCox1}
Let $W$ be a finite irreducible Coxeter group with an even Coxeter number. If $X\sqcup Y$ is a bipartition of the vertex set $\Gamma(W)$, then $w_0Xw_0=X$. 
\end{lemma}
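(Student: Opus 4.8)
The plan is to reduce the statement to a clean dichotomy and then exclude the bad case using the bipartite Coxeter element. As recorded just before the lemma, the map $\sigma\colon W\to W$ given by $\sigma(w)=w_0ww_0$ is an automorphism of $W$ with $w_0Sw_0=S$, hence induces an automorphism of the Coxeter diagram $\Gamma(W)$. Since $W$ is finite and irreducible, $\Gamma(W)$ is a connected tree, so it is bipartite with a \emph{unique} bipartition; consequently the pair $\{X,Y\}$ is intrinsic to $\Gamma(W)$, and the diagram automorphism $\sigma$ either fixes both blocks setwise (so $w_0Xw_0=X$ and $w_0Yw_0=Y$) or interchanges them (so $w_0Xw_0=Y$ and $w_0Yw_0=X$). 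Thus it suffices to rule out the second, ``swapping,'' alternative when $h$ is even. (We may assume $|S|\geq 2$, since otherwise $W$ is the dihedral-free case $A_1$ and the claim is trivial; in particular both $X$ and $Y$ are nonempty.)

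Next I would bring in the bipartite Coxeter element. Since the simple generators in $X$ pairwise commute, the product $x:=\prod_{s\in X}s$ is a well-defined involution, equal to the longest element of the parabolic subgroup $W_X$, so that $D_R(x)=X$; likewise $y:=\prod_{s\in Y}s$ is an involution with $D_R(y)=Y$. The element $c:=xy$ is a Coxeter element, so it has order $h$. Two facts drive the argument. First, a direct computation gives $xcx=x(xy)x=yx=c^{-1}$, so $x$ conjugates $c$ to its inverse (and symmetrically for $y$). Second --- and this is the crux --- when $h$ is even the longest element is the corresponding power of the bipartite Coxeter element, namely $w_0=c^{h/2}$; this is a classical fact (see, e.g., \cite{Humphreys}), which one sees by checking that the word obtained by alternately concatenating reduced words for $x$ and $y$, with $h/2$ factors of each, is reduced, hence has length $\tfrac{h}{2}(|X|+|Y|)=\tfrac{|S|\,h}{2}=\ell(w_0)$ and therefore represents the unique element $w_0$ of maximal length.

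Combining these, since $h$ is even I would compute
\[
xw_0x=xc^{h/2}x=(xcx)^{h/2}=c^{-h/2}=c^{h/2}=w_0,
\]
using $c^h=e$ in the penultimate step. Hence $x$ and $w_0$ commute, so $\sigma(x)=w_0xw_0=x$. Now apply the dichotomy: because $\sigma$ permutes $S$, we have $\sigma(x)=\prod_{s\in X}\sigma(s)=\prod_{t\in\sigma(X)}t$, which equals $x$ if $\sigma(X)=X$ but equals $y$ if $\sigma(X)=Y$. Since $X$ and $Y$ are disjoint and nonempty, $x\neq y$ (indeed $D_R(x)=X\neq Y=D_R(y)$), so the swapping alternative would force $x=\sigma(x)=y$, a contradiction. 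Therefore $\sigma(X)=X$, that is, $w_0Xw_0=X$.

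The main obstacle is precisely the identity $w_0=c^{h/2}$ for even $h$: everything else is formal manipulation, but this is the only step that genuinely uses the parity hypothesis (for odd $h$ one has instead that $w_0$ interchanges $x$ and $y$, and the swapping alternative does occur, as in types $A_{\mathrm{even}}$ and $I_2(m)$ with $m$ odd). Should one wish to avoid quoting this identity, the same conclusion follows from the classification: $\sigma$ is nontrivial among even-$h$ types only in types $A_n$ ($n$ odd), $D_n$ ($n$ odd), and $E_6$, and in each case $\sigma$ fixes a vertex of the tree (the central vertex of the path for $A_n$ with $n$ odd, and the unique branch vertex in types $D$ and $E_6$), so it preserves the bipartition. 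The uniform argument above is cleaner, however, and fits the type-independent spirit of the paper.
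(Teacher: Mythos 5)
Your proof is correct, but it takes a genuinely different route from the paper's. The paper establishes Lemma~\ref{LemCox1} by consulting the classification of Coxeter diagrams: the only finite irreducible types admitting a diagram automorphism that can move the class $X$ are the dihedral groups, the symmetric groups $S_n$ with $n$ odd, and $F_4$; the first two are excluded by the parity of $h$, and for $F_4$ and $I_2(m)$ with $m$ even one checks that $w\mapsto w_0ww_0$ is the identity. Your main argument is instead uniform and type-free: you reduce to the fix-or-swap dichotomy for the unique bipartition of the connected tree $\Gamma(W)$, then exclude the swap by showing that $x=\prod_{s\in X}s$ commutes with $w_0=c^{h/2}$ via $xcx=c^{-1}$ and $c^{-h/2}=c^{h/2}$; the final step $\sigma(x)=x\Rightarrow\sigma(X)=X$ is justified because the product of a set of pairwise commuting simple generators determines that set (it is the longest element of the corresponding parabolic, with descent set equal to the set). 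The one external input, $w_0=(c_Xc_Y)^{h/2}$ for even $h$, is precisely the Bourbaki fact the paper itself invokes in the proof of Proposition~\ref{PropCox1}; since that citation is independent of Lemma~\ref{LemCox1}, there is no circularity. What your approach buys is independence from the classification, in keeping with the paper's stated preference for type-independent arguments; what the paper's approach buys is brevity, needing only a glance at the list of diagrams plus the centrality of $w_0$ in $F_4$ and $I_2(2k)$. Your closing classification-based fallback is also sound (a tree automorphism fixing a vertex preserves the bipartition) and is essentially a reorganization of the paper's own argument.
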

  
\begin{proof}
Referring to the classification of Coxeter diagrams of finite irreducible Coxeter groups, we immediately find that every automorphism of $\Gamma(W)$ must fix the set $X$ unless $W$ is isomorphic to a dihedral group, a symmetric group $S_n$ with $n$ odd, or the exceptional group $F_4$. Because $W$ has an even Coxeter number, it is not isomorphic to a symmetric group $S_n$ with $n$ odd or a dihedral group $I_2(m)$ with $m$ odd. It is known (one can even check by hand) that if $W$ is isomorphic to $F_4$ or to $I_2(m)$ with $m$ even, then the automorphism given by $w\mapsto w_0ww_0$ is trivial.
\end{proof}

\begin{lemma}\label{LemCox2}
Let $W$ be a finite Coxeter group. Choose $s\in S$, and let $J=S\setminus\{s\}$. We have $D_R(\prescript{J}{}\!w_0)=\{w_0sw_0\}$. 
\end{lemma}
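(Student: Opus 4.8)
The plan is to first identify the minimal-length coset representative $\prescript{J}{}\!w_0$ explicitly and then compute its right descent set using the length-reversing properties of $w_0$.

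For the first step, I claim that $\prescript{J}{}\!w_0=w_0(J)w_0$, where $w_0(J)$ is the longest element of $W_J=W_{S\setminus\{s\}}$. Since $w_0(J)\in W_J$, the element $w_0(J)w_0$ lies in the right coset $W_Jw_0$, so it suffices to show it is the minimal-length representative, i.e., that it has no left descent in $J$ (this is the standard characterization of $\prescript{J}{}\!W$ from \cite{BjornerBrenti}). Recalling the standard identity $\ell(uw_0)=\ell(w_0)-\ell(u)$ for every $u\in W$, I would compute, for $t\in J$, that $\ell(t\,w_0(J)w_0)=\ell(w_0)-\ell(t\,w_0(J))=\ell(w_0)-\ell(w_0(J))+1=\ell(w_0(J)w_0)+1$, using $t\in D_L(w_0(J))=J$. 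Hence no $t\in J$ is a left descent of $w_0(J)w_0$, which identifies it as $\prescript{J}{}\!w_0$.

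For the second step, the key tool is a descent-complementation identity: for any $v\in W$ and any $s'\in S$, one has $s'\in D_R(vw_0)$ if and only if $w_0s'w_0\notin D_R(v)$. This follows by writing $vw_0s'=(v\,w_0s'w_0)w_0$ and applying $\ell(uw_0)=\ell(w_0)-\ell(u)$ twice, since then $\ell(vw_0s')<\ell(vw_0)$ is equivalent to $\ell(v\,w_0s'w_0)>\ell(v)$, i.e., to $w_0s'w_0\notin D_R(v)$; here I use that $w_0Sw_0=S$, so $w_0s'w_0$ is again a simple generator. Applying this with $v=w_0(J)$ and using $D_R(w_0(J))=J$, I obtain $s'\in D_R(\prescript{J}{}\!w_0)$ if and only if $w_0s'w_0\notin J$. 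Since $J=S\setminus\{s\}$ and conjugation by $w_0$ permutes $S$, the condition $w_0s'w_0\notin J$ holds precisely when $w_0s'w_0=s$, i.e., when $s'=w_0sw_0$, yielding $D_R(\prescript{J}{}\!w_0)=\{w_0sw_0\}$.

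The routine ingredients are the length identity $\ell(uw_0)=\ell(w_0)-\ell(u)$, the equalities $D_L(w_0(J))=D_R(w_0(J))=J$, and the characterization of $\prescript{J}{}\!W$ as the elements with no left descent in $J$, all of which are standard for finite Coxeter groups. The only genuinely load-bearing step is the descent-complementation identity, and the main thing to get right is the bookkeeping of the conjugation $s'\mapsto w_0s'w_0$ together with the two applications of the length-reversal identity; everything else reduces to short length computations.
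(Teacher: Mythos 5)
Your proof is correct, and it organizes the standard $w_0$-bookkeeping along a genuinely different route from the paper's. The paper does not work with an explicit formula for $\prescript{J}{}\!w_0$: it first establishes the conjugation identity $D_R(\prescript{J}{}\!w_0)=w_0D_L(\prescript{J}{}\!w_0)w_0$ by a length computation that exploits the fact that $(w_0)_J=w_0(J)$ is an involution (so $w_0(\prescript{J}{}\!w_0)^{-1}=(\prescript{J}{}\!w_0)w_0$), and then reads off $D_L(\prescript{J}{}\!w_0)=\{s\}$ from the characterization of $\prescript{J}{}\!W$ as the set of elements whose left descent sets lie in $S\setminus J=\{s\}$, together with $\prescript{J}{}\!w_0\neq e$. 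You instead pin down $\prescript{J}{}\!w_0=w_0(J)w_0$ explicitly (your verification that no $t\in J$ is a left descent of $w_0(J)w_0$ is a correct self-contained derivation of the fact the paper cites from Bj\"orner--Brenti) and then apply the descent-complementation rule $s'\in D_R(vw_0)\iff w_0s'w_0\notin D_R(v)$ with $v=w_0(J)$ and $D_R(w_0(J))=J$. Both arguments ultimately rest on the same ingredients, namely $\ell(uw_0)=\ell(w_0)-\ell(u)$, the fact that $w_0Sw_0=S$, and the relation between $(w_0)_J$ and $w_0(J)$; the difference is that you trade the paper's use of the left-descent characterization of the parabolic quotient for the complementation identity $D_R(vw_0)=w_0(S\setminus D_R(v))w_0$, which is arguably the more reusable general fact. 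All of your individual length computations check out.
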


\begin{proof}
Since the map $w\mapsto w_0ww_0$ is an automorphism of $W$ that fixes the set $S$, we have $\ell(w_0ww_0)=\ell(w)$ for all $w\in W$. The element $(w_0)_J=w_0(\prescript{J}{}\!w_0)^{-1}$ is equal to the involution $w_0(J)$ \cite[Section~2.5]{BjornerBrenti}, so $w_0(\prescript{J}{}\!w_0)^{-1}=(\prescript{J}{}\!w_0)w_0$. Therefore, for every $r\in S$, we have \[\ell(\prescript{J}{}\!w_0\cdot w_0rw_0)=\ell(w_0(\prescript{J}{}\!w_0)^{-1}rw_0)=\ell(w_0(r\cdot\prescript{J}{}\!w_0)^{-1}w_0)=\ell((r\cdot\prescript{J}{}\!w_0)^{-1})=\ell(r\cdot\prescript{J}{}\!w_0).\] It follows that $D_R(\prescript{J}{}\!w_0)=w_0D_L(\prescript{J}{}\!w_0)w_0$, so we are left to prove that $D_L(\prescript{J}{}\!w_0)=\{s\}$. An alternative characterization of the parabolic quotient $\prescript{J}{}\!W=\{\prescript{J}{}\!w:w\in W\}$ is that it is the set of elements of $W$ whose left descent sets are contained in the set $S\setminus J=\{s\}$ (see \cite[Section~2.4]{BjornerBrenti}). Since $\prescript{J}{}\!w_0\in\prescript{J}{}\!W$ is not the identity element $e$, its left descent set must be $\{s\}$. 
\end{proof}

The next proposition is the key to proving both Theorems~\ref{ThmCox1} and \ref{ThmCox2}. 

\begin{proposition}\label{PropCox1}
Let $W$ be a finite irreducible Coxeter group whose Coxeter number $h$ is even. Choose $s\in S$, and let $J=S\setminus \{s\}$. We have $\Pop^{h-1}(\prescript{J}{}\!w_0)=e$ and $\Pop^{h-2}(\prescript{J}{}\!w_0)\neq e$. Furthermore, for every $t\geq 0$, the right descents of $\Pop^t(\prescript{J}{}\!w_0)$ all commute with each other.  
\end{proposition}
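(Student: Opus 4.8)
The plan is to understand the forward orbit of $\prescript{J}{}\!w_0$ under $\Pop$ by relating it to a specific reduced-word structure governed by the bipartition $X \sqcup Y$ of $\Gamma(W)$ and by the Coxeter element. Recall from \eqref{EqCox1} that $\Pop(w) = w\,w_0(D_R(w))$, so each application of $\Pop$ strips off the longest element of the parabolic subgroup generated by the current right descent set. The key structural input is Lemma~\ref{LemCox2}: the element $\prescript{J}{}\!w_0$ has a \emph{singleton} right descent set, namely $\{w_0 s w_0\}$. The idea is to show that the orbit proceeds by alternately peeling off the ``$X$-part'' and the ``$Y$-part'' of the descent set, with each $\Pop$-step multiplying by $w_0(X)$ or $w_0(Y)$ (the longest elements of the parabolic subgroups on each side of the bipartition, which exist because $X$ and $Y$ are each sets of mutually commuting generators, so $w_0(X) = \prod_{x\in X} x$ and $w_0(Y) = \prod_{y\in Y} y$ are involutions). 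This is precisely the mechanism by which the Coxeter number enters: the product $w_0(X) w_0(Y)$ (or its relatives) is a Coxeter element, whose order is $h$.

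First I would establish the final clause, that the right descents of $\Pop^t(\prescript{J}{}\!w_0)$ always commute. The base case $t=0$ is Lemma~\ref{LemCox2}, since a singleton descent set trivially consists of commuting elements. For the inductive step, I would argue that $D_R(\Pop^t(\prescript{J}{}\!w_0))$ is always contained in either $X$ or in $Y$; since elements within $X$ (respectively within $Y$) mutually commute, this gives the conclusion. To see the containment, I would track how the descent set evolves: if $D_R(w) \subseteq X$, then $\Pop(w) = w\,w_0(D_R(w))$ and one computes that the new right descent set lands inside $Y$, and vice versa. Here Lemma~\ref{LemCox1} ($w_0 X w_0 = X$, using that $h$ is even) controls how $w_0 s w_0$ interacts with the bipartition, ensuring the initial descent $w_0 s w_0$ sits in the same part as $s$, so the alternation begins cleanly.

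The length count is what pins down the exponents $h-1$ and $h-2$. I would show that along this orbit the length decreases in a controlled, essentially constant-speed fashion governed by multiplication by Coxeter-element data, so that the orbit has exactly $h-1$ steps before reaching $e$: concretely, that $\Pop^{h-1}(\prescript{J}{}\!w_0) = e$ while $\Pop^{h-2}(\prescript{J}{}\!w_0) \neq e$. The natural approach is to identify $\prescript{J}{}\!w_0$, or a suitable element closely tracking it, as a near-maximal prefix of a reduced word for $w_0$ written as an alternating product $\cdots w_0(X) w_0(Y) w_0(X)$, in analogy with the dihedral computation carried out earlier in this section (where $\alpha_{m-1}, \beta_{m-1}$ had orbits of size exactly $m = h$). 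The fact that $w_0$ itself factors as such an alternating product of length $h$ in the bipartite generators is the representation-theoretic heart of the Coxeter number, and $\prescript{J}{}\!w_0$, being $w_0$ with its $W_J$-part removed on the left, should realize a prefix one step shorter, yielding orbit size exactly $h-1$.

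The main obstacle I anticipate is making the ``alternation'' argument precise: showing rigorously that $D_R(\Pop^t(\prescript{J}{}\!w_0))$ is nonempty and alternates between subsets of $X$ and subsets of $Y$ at the correct rate, without collapsing early (which would shorten the orbit) or stalling. This requires careful use of the length-additivity of parabolic factorizations and of Lemma~\ref{LemCox3} (monotonicity of $w \mapsto \prescript{J}{}\!w$) to control the descent sets under iteration, together with the even-$h$ input from Lemma~\ref{LemCox1} to guarantee the parity/symmetry that keeps the two halves of the bipartition balanced. I expect the cleanest route is to prove by induction a precise closed form for $\Pop^t(\prescript{J}{}\!w_0)$ as an explicit alternating product, from which both the exponent claims and the commuting-descents claim fall out simultaneously; verifying the inductive step that this product genuinely equals the meet $\bigwedge_R$ defining $\Pop$ is where the delicate work lies.
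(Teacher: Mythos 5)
Your framework is the right one---the bipartition $X\sqcup Y$, the factorization $w_0=(c_Xc_Y)^{h/2}$, and Lemmas~\ref{LemCox1} and \ref{LemCox2} are exactly the inputs the paper uses---and your alternation idea suffices for the last clause: if $D_R(w)$ lies in one part of the bipartition, then $w_0(D_R(w))$ is the commuting product $\prod_{r\in D_R(w)}r$, and a short computation with $\ell(ab)\geq\ell(a)-\ell(b)$ shows $D_R(\Pop(w))$ is disjoint from that part; Lemmas~\ref{LemCox1} and \ref{LemCox2} start the alternation off with $D_R(\prescript{J}{}\!w_0)=\{w_0sw_0\}\subseteq X$.

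However, the heart of the proposition is the exact orbit length, and there your plan rests on a false structural premise. You write that $\prescript{J}{}\!w_0$ ``should realize a prefix one step shorter'' of the alternating product $\cdots w_0(X)w_0(Y)w_0(X)$. It does not: $\ell(\prescript{J}{}\!w_0)=\ell(w_0)-\ell(w_0(J))$, and $\ell(w_0(J))$ is in general far larger than $|X|$ or $|Y|$ (in $B_3$ with $W_J\cong B_2$ one gets $\ell(\prescript{J}{}\!w_0)=9-4=5$, which is neither $9-|X|$ nor $9-|Y|$). So $\prescript{J}{}\!w_0$ is a \emph{scattered subword} of the bipartite reduced word $u_1\cdots u_h$, not a prefix, and the orbit elements are products $v_1\cdots v_m$ of sub-blocks $v_i\subseteq u_i$ that are generally proper, not truncations of an alternating product of $w_0(X)$ and $w_0(Y)$. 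The missing ideas are exactly the ones you flag as ``where the delicate work lies'': (i) choosing the deletion realizing $\prescript{J}{}\!w_0$ inside $u_1\cdots u_h$ so that each deleted letter drops the length by exactly $1$ (via the proof of \cite[Theorem~2.5.5]{BjornerBrenti}); (ii) showing that none of $v_1,\ldots,v_{h-1}$ can be emptied---for $v_1$ this uses the Bruhat antiautomorphism $w\mapsto ww_0$ to force the contradiction $s\in W_J$, and for $2\leq i\leq h-1$ it uses commutativity within each part to show that emptying $v_i$ would make some single deletion drop the length by $3$; and (iii) showing that $v_h$ \emph{is} empty, which is where Lemmas~\ref{LemCox1} and \ref{LemCox2} really enter (the unique right descent $w_0sw_0$ lies in $X$, whereas a nonempty $v_h$ would produce a right descent in $Y$). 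One must also show that $D_R(v_1\cdots v_m)$ \emph{equals}, not merely contains, the set of letters of $v_m$, by checking that $v_1\cdots v_mx_m$ and $v_1\cdots v_mu_{m+1}$ are reduced. Without (i)--(iii) you cannot rule out the orbit ``collapsing early,'' which is precisely the content of the claims $\Pop^{h-1}(\prescript{J}{}\!w_0)=e$ and $\Pop^{h-2}(\prescript{J}{}\!w_0)\neq e$.
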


\begin{proof}
Let us choose a bipartition $X\sqcup Y$ of the vertex set $S$ of $\Gamma(W)$. Without loss of generality, assume $s\in X$. All of the elements of $X$ commute with each other, so it makes sense to define the product $c_X=\prod_{r\in X}r$. Similarly, we can define $c_Y=\prod_{r\in Y}r$. The element $c=c_Xc_Y$ is a Coxeter element of $W$. Let us fixed a reduced word $r_1\cdots r_{|S|}$ of $c_Xc_Y$ such that $r_1\cdots r_{|X|}$ is a reduced word for $c_X$ and $r_{|X|+1}\cdots r_{|S|}$ is a reduced word for $c_Y$. Without loss of generality, we may assume $r_1=s$. According to \cite[Chapter~V, Section~6.2, Proposition~2]{Bourbaki}, we have $w_0=(c_Xc_Y)^{h/2}$. It is well known \cite[Section~3.18]{Humphreys} that $\ell(w_0)=|S|\cdot h/2$, so concatenating $r_1\cdots r_{|S|}$ with itself $h/2$ times produces a reduced word $s_1\cdots s_{\ell(w_0)}$ for $w_0$. Note that $s_1=r_1=s$. Let us write the word $s_1\cdots s_{\ell(w_0)}$ as the concatenation $u_1\cdots u_h$, where $u_i=r_1\cdots r_{|X|}$ when $i$ is odd and $u_i=r_{|X|+1}\cdots r_{|S|}$ when $i$ is even. We think of the letters $s_1,\ldots,s_{\ell(w_0)}$ as distinct entities, and we think of each $s_j$ as belonging to exactly one of the words $u_i$. For example, $s_1$ belongs to $u_1$, while $s_{\ell(w_0)}$ belongs $u_h$. 

Let $k=\ell(w_0)-\ell(\prescript{J}{}\!w_0)$. Given indices $1\leq p_1<\cdots<p_k\leq\ell(w_0)$ and $j\in[k]$, we can consider the word $s_1\cdots\widehat s_{p_1}\cdots\widehat s_{p_j}\cdots s_{\ell(w_0)}$ obtained by removing the letters in positions $p_1,\ldots,p_j$ from the word $s_1\cdots s_{\ell(w_0)}$. Let $z^{(j)}=s_1\cdots\widehat s_{p_1}\cdots\widehat s_{p_j}\cdots s_{\ell(w_0)}$ be the element of $W$ represented by this word. The proof of \cite[Theorem~2.5.5]{BjornerBrenti} shows that it is possible to choose the indices $p_1<\cdots<p_k$ so that $\ell(z^{(j)})=\ell(w_0)-j$ for all $j\in[k]$ and so that $z^{(k)}=\prescript{J}{}\!w_0$. Let us assume that we have made such a choice. Let $v_i$ be the word obtained from $u_i$ by removing the letters in the sequence $s_{p_1},\ldots,s_{p_k}$ that belong to $u_i$. Thus, $v_1\cdots v_h$ is the reduced word $s_1\cdots\widehat s_{p_1}\cdots\widehat s_{p_k}\cdots s_{\ell(w_0)}$ for $\prescript{J}{}\!w_0$. 

We claim that none of the words $v_1,\ldots,v_{h-1}$ are empty. To prove this, we first assume, by way of contradiction, that $v_1$ is empty. This implies that $p_1=1$, so the reduced word $v_1\cdots v_h$ for $\prescript{J}{}\!w_0$ is contained in the reduced word $s_2\cdots s_{\ell(w_0)}$ for $sw_0$. Hence, $\prescript{J}{}\!w_0\leq sw_0$ in the strong Bruhat order. The map $W\to W$ given by $w\mapsto ww_0$ is an antiautomorphism of the strong Bruhat order \cite[Proposition~2.3.4]{BjornerBrenti}, so $s\leq (\prescript{J}{}\!w_0)w_0=((w_0)_J)^{-1}$. Since $((w_0)_J)^{-1}\in W_J$, this forces $s\in J$, which contradicts the fact that $J=S\setminus\{s\}$. Consequently, $v_1$ is not the empty word. 

Now suppose there is some $i\in\{2,\ldots,h-1\}$ such that $v_i$ is the empty word; we may assume that this $i$ is chosen minimally. Let $j$ be the index such that $s_{p_j}$ belongs to $u_i$ and is the last letter in $u_i$. Then $z^{(j-1)}=v_1\cdots v_{i-1}s_{p_j}u_{i+1}\cdots u_h$ and $z^{(j)}=v_1\cdots v_{i-1}u_{i+1}\cdots u_h$. By the minimality of $i$, the word $v_{i-1}$ is nonempty. Let $s'$ be the last letter of $v_{i-1}$, and write $v_{i-1}=\widetilde v_{i-1}s'$. Because $i-1$ and $i+1$ have the same parity, $s'$ is an element of $S$ that appears in $u_{i+1}$. Furthermore, the elements of $S$ appearing in $u_{i+1}$ all commute with each other, so there is a reduced word $\widetilde u_{i+1}$ such that $u_{i+1}=s'\widetilde u_{i+1}$ (as elements of $W$). This means that $z^{(j)}=v_1\cdots \widetilde v_{i-1}s's'\widetilde u_{i+1}\cdots u_h=v_1\cdots \widetilde v_{i-1}\widetilde u_{i+1}\cdots u_h$, so $\ell(z^{(j)})\leq\ell(z^{(j-1)})-3$. This contradicts the fact that $\ell(z^{(j)})=\ell(w_0)-j=\ell(z^{(j-1)})-1$. Hence, we have proven the claim that $v_1,\ldots,v_{h-1}$ are nonempty.   

Now recall that $v_1\cdots v_h=\prescript{J}{}\!w_0$. Every letter in $v_h$ is also a letter in $u_h$, and the letters in $u_h$ are the elements of $Y$ because $h$ is even. It follows that if $v_h$ is nonempty, then its last letter is in $D_R(\prescript{J}{}\!w_0)\cap Y$. Lemma~\ref{LemCox2} tells us that $D_R(\prescript{J}{}\!w_0)=\{w_0sw_0\}$. Since $s\in X$, it follows from Lemma~\ref{LemCox1} that $w_0sw_0\in X$. Hence, $D_R(\prescript{J}{}\!w_0)\cap Y=\emptyset$. This demonstrates that $v_h$ is the empty word, so $\prescript{J}{}\!w_0=v_1\cdots v_{h-1}$. 

Choose some $m\in[h-1]$. Because the letters in $v_m$ all commute with each other, they must all be right descents of $v_1\cdots v_m$. We wish to show that the set of letters appearing in $v_m$ is actually equal to $D_R(v_1\cdots v_m)$. Let $x_m$ be a word obtained by multiplying the letters that appear in $u_m$ but not $v_m$ in some order. Then $u_m=v_mx_m$ (as elements of $W$), so $v_1\cdots v_mx_mu_{m+1}\cdots u_h$ is a reduced word for some $z^{(j)}$. Consequently, the word $v_1\cdots v_mx_m$ is reduced. All of the letters in $x_m$ commute with each other, so none of them can be right descents of $v_1\cdots v_m$. There is an index $j'$ such that $v_1\cdots v_mu_{m+1}\cdots u_h$ is a reduced word for $z^{(j')}$. It follows that the word $v_1\cdots v_mu_{m+1}$ is reduced. All of the letters in $u_{m+1}$ commute with each other, so none of them can be right descents of $v_1\cdots v_m$. Since every element of $S$ appears in exactly one of the words $v_m, x_m, u_{m+1}$, this proves that $D_R(v_1\cdots v_m)$ is the set of letters appearing in $v_m$. Because the letters in $v_m$ all commute with each other, the element of $W$ represented by $v_m$ is $w_0(D_R(v_1\cdots v_m))$. Appealing to \eqref{EqCox1}, we find that $\Pop(v_1\cdots v_m)=v_1\cdots v_mw_0(D_R(v_1\cdots v_m))=v_1\cdots v_{m-1}(w_0(D_R(v_1\cdots v_m)))^2=v_1\cdots v_{m-1}$.  

It now follows by induction on $t$ that $\Pop^t(\prescript{J}{}\!w_0)=v_1\cdots v_{h-t-1}$ for every $0\leq t\leq h-1$. In particular, $\Pop^{h-1}(\prescript{J}{}\!w_0)=e$ and $\Pop^{h-2}(\prescript{J}{}\!w_0)=v_1\neq e$. Furthermore, the right descents of $\Pop^t(\prescript{J}{}\!w_0)$ are the letters appearing in $v_{h-t-1}$, which all commute with each other. If $t\geq h$, then it is vacuously true that the right descents of $\Pop^t(\prescript{J}{}\!w_0)=e$ commute with each other.  
\end{proof}

Proposition~\ref{PropCox1} tells us already that if $W$ is finite and irreducible with an even Coxeter number $h$, then $\sup\limits_{w\in W}\left|O_{\Pop}(w)\right|\geq h$. To prove the reverse inequality and Theorem~\ref{ThmCox2}, we need a couple more preparatory lemmas. Recall that $\leq$ denotes the strong Bruhat order. 

\begin{lemma}\label{LemCox4}
Let $W$ be a finite Coxeter group. Let $x,y\in W$, and assume that all of the right descents of $y$ commute with each other. If $x\leq y$, then $\Pop(x)\leq\Pop(y)$.  
\end{lemma}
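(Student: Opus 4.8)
The plan is to reinterpret $\Pop$ in terms of minimal coset representatives and then factor the desired Bruhat inequality through a single fixed parabolic quotient. I will use \eqref{EqCox1} throughout, so that $\Pop(w)=ww_0(D_R(w))$. For $J\subseteq S$ with $W_J$ finite and $w\in W$, let $w^J$ denote the unique minimal-length element of the \emph{right} coset $wW_J$; recall that $w^J$ is characterized by $D_R(w^J)\cap J=\emptyset$ and that $w=w^J v$ length-additively for a unique $v\in W_J$. Writing $K=D_R(x)$ and $J=D_R(y)$, my first step is the identification $\Pop(x)=x^{K}$ and $\Pop(y)=y^{J}$. This follows from the standard facts about coset representatives: since $D_R(x)=K$, the element $x$ is the maximal-length element of $xW_K$, so $xw_0(K)$ is the minimal-length element of that coset, i.e. $\Pop(x)=x^{K}$; the identical argument gives $\Pop(y)=y^{J}$. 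Note that neither identification uses the commuting hypothesis.

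The core of the argument is then the chain
\[\Pop(x)=x^{K}\ \le_R\ x^{J}\ \le\ y^{J}=\Pop(y),\]
where $\le_R$ is the right weak order, which refines $\le$. The middle inequality $x^{J}\le y^{J}$ is the monotonicity of the minimal-coset-representative projection $w\mapsto w^{J}$ with respect to the Bruhat order; this is a standard consequence of the lifting property (compare Lemma~\ref{LemCox3}, which is the analogous statement for the right weak order and left cosets), and it uses only $x\le y$, not the commuting hypothesis. So everything reduces to establishing the first inequality $x^{K}\le_R x^{J}$.

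To prove $x^{K}\le_R x^{J}$ I would exploit the commuting hypothesis on $J$ to get an explicit description of $x^{J}$. Because the elements of $J$ commute, the minimal representative of $xW_J$ is obtained simply by deleting the $J$-descents of $x$: one checks that $x^{J}=x\,w_0(J\cap K)$, the product being length-additive and satisfying $D_R(x^J)\cap J=\emptyset$ (here $w_0(J\cap K)=\prod_{t\in J\cap K}t$ since $J\cap K$ is a commuting set of descents of $x$). Since $J\cap K\subseteq K$, the longest element $w_0(J\cap K)$ of the parabolic $W_{J\cap K}$ satisfies $w_0(J\cap K)\le_R w_0(K)$ in $W_K$, so $w_0(K)=w_0(J\cap K)\,z$ length-additively for some $z\in W_K$. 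Left-multiplying by $x$ and comparing lengths with $\Pop(x)=xw_0(K)$ and $x^{J}=xw_0(J\cap K)$ yields $\Pop(x)=x^{J}z$ with $\ell(x^{J})=\ell(\Pop(x))+\ell(z)$, which is exactly $\Pop(x)\le_R x^{J}$, completing the chain.

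The main obstacle is precisely this first inequality, and specifically the identity $x^{J}=x\,w_0(J\cap D_R(x))$: this is the one place where commutativity of $J$ is indispensable and must be used with care. Without it, deleting the $J$-descents of $x$ can create \emph{new} $J$-descents, so $x\,w_0(J\cap K)$ need not be the minimal representative $x^{J}$, and the conclusion can genuinely fail. For instance, in $S_4$ with $x=4231$ and $J=\{s_2,s_3\}$ (a non-commuting pair), one computes $\Pop(x)=2413$ while $x^{J}=4123$, and $2413\not\le 4123$. Thus the heart of the proof is to verify that the commuting hypothesis forces $x^{J}=x\,w_0(J\cap D_R(x))$, after which the weak-order comparison and the projection monotonicity are routine.
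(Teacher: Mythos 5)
Your argument is correct, and it takes a genuinely different route from the paper's. The paper proves the lemma with a reduced-word argument: it concatenates a reduced word for $\Pop(y)$ with the pairwise commuting right descents of $y$ to obtain a reduced word for $y$, extracts a reduced subword for $x$ via the subword property, splits it into a prefix $z\leq\Pop(y)$ and a suffix lying in $W_{D_R(x)}$, and then deduces $\Pop(x)\leq_R z$ by a length count. You instead recast $\Pop(w)$ as the minimal-length representative $w^{D_R(w)}$ of the right coset $wW_{D_R(w)}$ and factor everything through the single quotient determined by $J=D_R(y)$, via the chain $\Pop(x)=x^{K}\leq_R x^{J}\leq y^{J}=\Pop(y)$ with $K=D_R(x)$; the middle inequality is the standard Bruhat-monotonicity of $w\mapsto w^{J}$, and the first inequality is the only place commutativity enters. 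This is a clean reorganization: it isolates exactly where the hypothesis is used, and your $S_4$ example with $x=4231$ and $J=\{s_2,s_3\}$ (where $\Pop(x)=2413$ and $x^{J}=4123$ are incomparable, both having length $3$) makes its necessity transparent. The one step you assert without proof is the crux identity $x^{J}=x\,w_0(J\cap K)$, equivalently $D_R(x\,w_0(J\cap K))\cap J=\emptyset$; it is true, but you should write out the verification. For $s\in J\cap K$ it is the usual maximal-coset-element length argument; for $s\in J\setminus K$ one checks via the lifting property that if $t\in D_R(u)$, $s\notin D_R(u)$, and $st=ts$, then comparing the possible lengths of $ust=uts$ from both sides forces $\ell(uts)=\ell(u)$, so $s\notin D_R(ut)$, and one then iterates over the commuting descents in $J\cap K$ (length-additivity of $x\,w_0(J\cap K)$ guarantees each remaining element of $J\cap K$ stays a descent at each stage). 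With that filled in, your proof is complete; what it buys over the paper's is a conceptual separation of the Bruhat-order content (projection monotonicity) from the weak-order content (comparing the two coset projections of $x$), at the cost of invoking the Bruhat-monotonicity of parabolic projection, which the paper's subword argument avoids.
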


\begin{proof}
Let $s_1,\ldots,s_r$ be the right descents of $y$. Because $s_1,\ldots,s_r$ all commute with each other, we have $w_0(D_R(y))=s_1\cdots s_r$. Let $s_1'\cdots s_q'$ be a reduced word for $\Pop(y)$. Since $\Pop(y)\leq_R y$ by definition, it follows from \eqref{EqCox1} and the fact that $w_0(D_R(y))$ is an involution that $s_1'\cdots s_q's_1\cdots s_r$ is a reduced word for $y$. Assume $x\leq y$. This implies that the reduced word $s_1'\cdots s_q's_1\cdots s_r$ contains a reduced word $s_{i_1}'\cdots s_{i_a}'s_{j_1}\cdots s_{j_b}$ for $x$. Here, $s_{i_1}'\cdots s_{i_a}'$ is a reduced word for some element $z$ with $z\leq \Pop(y)$. Note that $\ell(z)=\ell(x)-b$. Since $s_{j_1},\ldots, s_{j_b}$ all commute with each other, they are all in $D_R(x)$. Thus, $s_{j_1}\cdots s_{j_b}$ is in the parabolic subgroup $W_{D_R(x)}$. Now, $w_0(D_R(x))$ is the unique maximal element in the left weak order on $W_{D_R(x)}$, so $s_{j_1}\cdots s_{j_b}\leq_L w_0(D_R(x))$. This means that there exists $v\in W$ such that $w_0(D_R(x))=vs_{j_1}\cdots s_{j_b}$ and $\ell(w_0(D_R(x)))=\ell(v)+b$. Using \eqref{EqCox1} and the fact that $w_0(D_R(x))$ is an involution, we deduce that \[zs_{j_1}\cdots s_{j_b}=x=\Pop(x)w_0(D_R(x))=\Pop(x)vs_{j_1}\cdots s_{j_b},\] so $z=\Pop(x)v$. Since $\Pop(x)\leq_R x$, the factorization $x=\Pop(x)w_0(D_R(x))$ is length-additive, meaning $\ell(x)=\ell(\Pop(x))+\ell(w_0(D_R(x)))$. Thus, $\ell(z)=\ell(x)-b=\ell(\Pop(x))+\ell(w_0(D_R(x)))-b=\ell(\Pop(x))+\ell(v)$. This shows that $\Pop(x)\leq_R z$, so $\Pop(x)\leq z\leq \Pop(y)$.   
\end{proof}

\begin{lemma}\label{LemCox5}
Let $(W,S)$ be a Coxeter system, and let $J\subseteq S$. If $f:W\to W$ is compulsive, then $\prescript{J}{}\!(f(w))\leq_R\Pop(\prescript{J}{}\!w)$ for every $w\in W$.  
\end{lemma}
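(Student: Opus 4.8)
The plan is to unwind the meet that defines $\Pop(\prescript{J}{}\!w)$ and to verify the desired inequality against each of its generators, using Lemma~\ref{LemCox3} (the monotonicity of the projection $v\mapsto\prescript{J}{}\!v$ under $\leq_R$) as the main tool. By the definition of the Coxeter pop-stack-sorting operator,
\[
\Pop(\prescript{J}{}\!w)=\bigwedge\nolimits_R\big(\{\prescript{J}{}\!w\}\cup\{\prescript{J}{}\!w\,s:s\in D_R(\prescript{J}{}\!w)\}\big).
\]
Since a meet is a greatest lower bound, it suffices to show that $\prescript{J}{}\!(f(w))$ lies weakly below $\prescript{J}{}\!w$ and below each element $\prescript{J}{}\!w\,s$ with $s\in D_R(\prescript{J}{}\!w)$ in the right weak order. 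The first inequality is immediate: compulsiveness gives $f(w)\leq_R w$, and Lemma~\ref{LemCox3} then yields $\prescript{J}{}\!(f(w))\leq_R\prescript{J}{}\!w$.

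For the remaining inequalities, fix $s\in D_R(\prescript{J}{}\!w)$. I would first record two facts about how the parabolic projection interacts with this particular descent. Since $w=w_J\,\prescript{J}{}\!w$ is length-additive we have $\prescript{J}{}\!w\leq_L w$, and the characterization $D_R(x)=\{r\in S:r\leq_L x\}$ together with transitivity gives $s\in D_R(w)$; this is exactly what is needed to apply compulsiveness in the form $f(w)\leq_R ws$. The crux, and the step I expect to be the main obstacle, is the identity $\prescript{J}{}\!(ws)=\prescript{J}{}\!w\,s$. To establish it, note that $\prescript{J}{}\!w\,s$ represents the coset $W_Jws$, so it is enough to check that it is the minimal-length representative, i.e.\ that $\prescript{J}{}\!w\,s\in\prescript{J}{}\!W$. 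Because $s\in D_R(\prescript{J}{}\!w)$ we have $\prescript{J}{}\!w\,s\leq_R\prescript{J}{}\!w$, whence $D_L(\prescript{J}{}\!w\,s)\subseteq D_L(\prescript{J}{}\!w)\subseteq S\setminus J$, using the characterization $D_L(x)=\{r\in S:r\leq_R x\}$ and the fact that the elements of $\prescript{J}{}\!W$ are precisely those whose left descents lie outside $J$. Hence $\prescript{J}{}\!w\,s\in\prescript{J}{}\!W$, and the identity follows.

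With both facts in hand, the conclusion is quick: applying Lemma~\ref{LemCox3} to $f(w)\leq_R ws$ gives $\prescript{J}{}\!(f(w))\leq_R\prescript{J}{}\!(ws)=\prescript{J}{}\!w\,s$ for every $s\in D_R(\prescript{J}{}\!w)$. Combined with $\prescript{J}{}\!(f(w))\leq_R\prescript{J}{}\!w$, this exhibits $\prescript{J}{}\!(f(w))$ as a lower bound of the set whose meet is $\Pop(\prescript{J}{}\!w)$, so $\prescript{J}{}\!(f(w))\leq_R\Pop(\prescript{J}{}\!w)$. I note that this argument handles an arbitrary compulsive $f$ directly and uniformly, so there is no need to first isolate the special case $f=\Pop$ and bootstrap via $f(w)\leq_R\Pop(w)$.
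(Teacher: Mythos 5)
Your proof is correct and follows essentially the same route as the paper's: reduce to the generators of the meet, use compulsiveness to get $f(w)\leq_R wx$, verify $\prescript{J}{}\!(wx)=\prescript{J}{}\!wx$, and apply Lemma~\ref{LemCox3}. The only cosmetic difference is that where you prove $\prescript{J}{}\!w\,s\in\prescript{J}{}\!W$ directly via left descent sets, the paper cites Stembridge's result that $\prescript{J}{}\!W$ is an order ideal in the right weak order.
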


\begin{proof}
By definition, $\Pop(\prescript{J}{}\!w)=\bigwedge_R\{\prescript{J}{}\!wx:x\in D_R(\prescript{J}{}\!w)\cup\{e\}\}$. Therefore, it suffices to prove that $\prescript{J}{}\!(f(w))\leq_R\prescript{J}{}\!wx$ for all $x\in D_R(\prescript{J}{}\!w)\cup\{e\}$. Fix $x\in D_R(\prescript{J}{}\!w)\cup\{e\}$. According to \cite[Proposition~2.5]{Stembridge}, the parabolic quotient $\prescript{J}{}\!W$ is an order ideal in the right weak order on $W$. Since $\prescript{J}{}\!wx\leq_R\prescript{J}{}\!w$, it follows that $\prescript{J}{}\!wx\in\prescript{J}{}\!W$. Because $wx=w_J\cdot\prescript{J}{}\!wx$, we must have $\prescript{J}{}\!(wx)=\prescript{J}{}\!wx$. Now, $x\in D_R(\prescript{J}{}\!w)\cup\{e\}\subseteq D_R(w)\cup\{e\}$, so it follows from the hypothesis that $f$ is compulsive that $f(w)\leq_R wx$. Invoking Lemma~\ref{LemCox3}, we see that $\prescript{J}{}\!(f(w))\leq_R\prescript{J}{}\!(wx)=\prescript{J}{}\!wx$, as desired.  
\end{proof}

We can now complete the proof of Theorem~\ref{ThmCox2}.

\begin{proof}[Proof of Theorem~\ref{ThmCox2}]
Let $W$ be an irreducible Coxeter group with Coxeter number $h$, and let $f:W\to W$ be compulsive. If $W$ is infinite, then $h=\infty$, so the result is trivial. Therefore, we may assume $W$ is finite. Fix $w\in W$; our goal is to prove that $f^{h-1}(w)=e$. It suffices to prove that $f^{h-1}(w)\in W_J$ for every set $J\subseteq S$ such that $|J|=|S|-1$. Fix such a set $J$. Note that the desired containment $f^{h-1}(w)\in W_J$ is equivalent to the identity $\prescript{J}{}\!(f^{h-1}(w))=e$. 

Because $w\leq_R w_0$, we have $\prescript{J}{}\!w\leq_R\prescript{J}{}\!w_0$ by Lemma~\ref{LemCox3}. Therefore, $\prescript{J}{}\!w\leq \prescript{J}{}\!w_0$. By induction on $t$, we will prove that $\prescript{J}{}\!(f^t(w))\leq \Pop^t(\prescript{J}{}\!w_0)$ for all $t\geq 0$. We have just established the base case $t=0$, so let us assume that $t\geq 1$ and that we have already proven the inequality $\prescript{J}{}\!(f^{t-1}(w))\leq \Pop^{t-1}(\prescript{J}{}\!w_0)$. Proposition~\ref{PropCox1} tells us that the right descents of $\Pop^{t-1}(\prescript{J}{}\!w_0)$ all commute with each other, so we can use Lemma~\ref{LemCox4} with $x=\prescript{J}{}\!(f^{t-1}(w))$ and $y=\Pop^{t-1}(\prescript{J}{}\!w_0)$ to see that $\Pop(\prescript{J}{}\!(f^{t-1}(w)))\leq\Pop^t(\prescript{J}{}\!w_0)$. Lemma~\ref{LemCox5} tells us that $\prescript{J}{}\!(f^t(w))\leq_R\Pop(\prescript{J}{}\!(f^{t-1}(w)))$, so $\prescript{J}{}\!(f^t(w))\leq\Pop(\prescript{J}{}\!(f^{t-1}(w)))\leq\Pop^t(\prescript{J}{}\!w_0)$. This completes the induction step. Now set $t=h-1$ to find that $\prescript{J}{}\!(f^{h-1}(w))\leq\Pop^{h-1}(\prescript{J}{}\!w_0)$. It follows from Lemma~\ref{LemCox6} (if $h$ is odd) and Proposition~\ref{PropCox1} (if $h$ is even) that $\Pop^{h-1}(\prescript{J}{}\!w_0)=e$. We deduce that $\prescript{J}{}\!(f^{h-1}(w))=e$, as desired.  
\end{proof}

Notice how the proof of Theorem~\ref{ThmCox2} and the arguments leading up to it have utilized the right weak order in tandem with the strong Bruhat order in a subtle manner. 

Let us now wrap up the proof of Theorem~\ref{ThmCox1}.

\begin{proof}[Proof of Theorem~\ref{ThmCox1}]
Let $W$ be an irreducible Coxeter group with Coxeter number $h$. We first assume $W$ is finite. If $h$ is odd, then we are done by Lemma~\ref{LemCox6}, so assume $h$ is even. Choose $s\in S$, and let $J=S\setminus\{s\}$. We saw in Proposition~\ref{PropCox1} that $\Pop^{h-2}(\prescript{J}{}\!w_0)\neq e$, so $\max\limits_{w\in W}\left|O_{\Pop}(w)\right|\geq\left|O_{\Pop}(\prescript{J}{}\!w_0)\right|\geq h$. On the other hand, the inequality $\max\limits_{w\in W}\left|O_{\Pop}(w)\right|\leq h$ follows immediately from Theorem~\ref{ThmCox2} since $\Pop$ is compulsive. 

Now assume $W$ is infinite so that $h=\infty$. Let us first suppose that $S$ is finite. Let $\mathcal N(W,S)=\{J\subseteq S:|W_J|<\infty\}$ be the nerve of $(W,S)$. Let $K=\max\limits_{J\in\mathcal N(W,S)}\ell(w_0(J))$. Choose $v\in W$. Since $v$ is an upper bound for $D_R(v)$ in the left weak order on $W$, it follows from \cite[Lemma~2.3.2]{BjornerBrenti} that $D_R(v)\in\mathcal N(W,S)$. Using \eqref{EqCox1} and the fact that $\Pop(v)\leq_R v$, we obtain \[\ell(\Pop(v))=\ell(v)-\ell((\Pop(v))^{-1}v)=\ell(v)-\ell((w_0(D_R(v)))^{-1})=\ell(v)-\ell(w_0(D_R(v)))\geq \ell(v)-K.\] As this is true for all $v\in W$, we find that $\left|O_{\Pop}(w)\right|\geq\ell(w)/K+1$ for all $w\in W$. An infinite irreducible Coxeter group contains arbitrarily long elements, so $\sup\limits_{w\in W}\left|O_{\Pop}(w)\right|=\infty$. 

Finally, suppose $S$ is infinite. Observe that if $J\subseteq S$, then $\Pop_{W_{J}}(w)=\Pop_W(w)$ for every $w\in W_{J}$. If there exists a finite set $S'\subseteq S$ such that $W_{S'}$ is infinite and irreducible, then it follows from the preceding paragraph that $\sup\limits_{w\in W}\left|O_{\Pop_W}(w)\right|\geq\sup\limits_{w\in W_{S'}}\left|O_{\Pop_{W_{S'}}}(w)\right|=\infty$. Now suppose no such set $S'$ exists. Let $S_1\subsetneq S_2\subsetneq \cdots$ be an infinite (strictly increasing) chain of finite subsets of $S$ such that for each $i\geq 1$, the induced subgraph of $\Gamma(W)$ on the vertex set $S_i$ is connected. The parabolic subgroups $W_{S_i}$ are irreducible and finite. Let $h_i$ be the Coxeter number of $W_{S_i}$. For every $i\geq 1$, we have $\sup\limits_{w\in W}\left|O_{\Pop_W}(w)\right|\geq\sup\limits_{w\in W_{S_i}}\left|O_{\Pop_{W_{S_i}}}(w)\right|=h_i$. 
The classification of Coxeter numbers \cite[Section~3.18]{Humphreys} implies that $h_i\to\infty$ as $i\to\infty$, so $\sup\limits_{w\in W}\left|O_{\Pop_W}(w)\right|=\infty$.  
\end{proof}

\section{$t$-Pop-Stack-Sortable Elements in Type $B$}\label{Sec:TypeB}

We begin this section by recalling some basic facts about Coxeter groups of type $B$. The map $w\mapsto w_0ww_0$ is an automorphism of $S_{2n}$; the permutations fixed by this automorphism form a subgroup $B_n$ of $S_{2n}$ called the $n^\text{th}$ \dfn{hyperoctahedral group}. The map $w\mapsto w_0ww_0$ is also a lattice automorphism of the right weak order on $S_{2n}$, so $(B_n,\leq_R)$ is a sublattice of $(S_{2n},\leq_R)$. Let $s_i$ denote the simple transposition in $S_{2n}$ that swaps $i$ and $i+1$. The group $B_n$ is a Coxeter group whose simple generators are the elements $s_1^B,\ldots,s_n^B$ given by $s_i^B=s_is_{2n-i}$ for $i\in[n-1]$ and $s_n^B=s_n$. Let us use $\wedge_R^A$ (respectively, $\wedge_R^B$) and $\bigwedge_R^A$ (respectively, $\bigwedge_R^B$) to denote meets in the right weak order on $S_{2n}$ (respectively, $B_n$). 

Fix $w\in B_n$, and let $\mathcal D$ be the set of indices $i\in[n]$ such that $s_i^B$ is a right descent of $w$ in $B_n$. The right descent set of $w$ as an element of $S_{2n}$ is $\{s_i:i\in\mathcal D\}\cup\{s_{2n-i}:i\in\mathcal D\}$. If $i\in\mathcal D\cap[n-1]$, then $ws_i\wedge_R^A ws_{2n-i}=ws_is_{2n-i}=ws_i^B$ because $s_i$ and $s_{2n-i}$ commute. On the other hand, if $n\in\mathcal D$, then $ws_n\wedge_R^A ws_{2n-n}=ws_n=ws_n^B$. It follows that \[\Pop_{S_{2n}}(w)=\bigwedge\nolimits_R^A(\{ws_i:i\in\mathcal D\}\cup\{ws_{2n-i}:i\in\mathcal D\}\cup\{w\})=\bigwedge\nolimits_R^A(\{ws_i\wedge_R^Aws_{2n-i}:i\in\mathcal D\}\cup\{w\})\] \[=\bigwedge\nolimits_R^A(\{ws_i^B:i\in\mathcal D\}\cup\{w\}).\] Since $(B_n,\leq_R)$ is a sublattice of $(S_{2n},\leq_R)$, this shows that $\Pop_{S_{2n}}(w)=\Pop_{B_n}(w)$. 

For $w\in S_{2n}$, the one-line notation of $w_0ww_0$ is \[(2n+1-w(2n))(2n+1-w(2n-1))\cdots(2n+1-w(1)).\] More geometrically, one can consider the \dfn{plot} of a permutation $v\in S_{2n}$, which is the set of points $(i,v(i))\in\mathbb R^2$ for $i\in[2n]$. The plot of $w_0ww_0$ is obtained by rotating the plot of $w$ by $180^\circ$ about the point $(\frac{n+1}{2},\frac{n+1}{2})$. The previous paragraph shows that $\Pop_{B_n}$ is the restriction of $\Pop_{S_{2n}}$ to $B_n$. The main conclusion we wish to draw from this is that the $t$-pop-stack-sortable elements of $B_n$ are precisely the $t$-pop-stack-sortable elements of $S_{2n}$ that are fixed by the automorphism $w\mapsto w_0ww_0$. In other words, they are the $t$-pop-stack-sortable permutations in $S_{2n}$ whose plots are invariant under $180^\circ$ rotation. 

\subsection{Enumerating $2$-Pop-Stack-Sortable Elements in Type $B$} 
In order to enumerate the $2$-pop-stack-sortable permutations in $B_{n}$, we first state a simple characterization of $2$-pop-stack-sortable permutations in $S_n$ from \cite{Pudwell}; this characterization also follows easily from the description of the pop-stack-sorting map given in Section~\ref{Subsec:PopMap}. 

\begin{lemma}[\cite{Pudwell}]\label{LemCox7}
Suppose $w\in S_n$ is a permutation whose descending runs, read from left to right, are $\delta_1,\ldots,\delta_r$. Then $w$ is $2$-pop-stack-sortable if and only if for every $j\in[r-1]$, the largest (equivalently, the first) entry in $\delta_j$ is at most $1$ more than the smallest (equivalently, the last) entry in $\delta_{j+1}$. 
\end{lemma}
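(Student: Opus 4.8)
The plan is to translate $2$-pop-stack-sortability into a statement about a single application of $\Pop$. Since the $1$-pop-stack-sortable permutations are exactly the layered permutations (an element $v$ satisfies $\Pop(v)=e$ if and only if $v=w_0(D_R(v))$, which is precisely the condition for $v$ to be layered recorded in Section~\ref{Subsec:PopMap}), a permutation $w$ is $2$-pop-stack-sortable if and only if $\Pop(w)$ is layered. Writing the descending runs of $w$ as $\delta_1,\ldots,\delta_r$, the map $\Pop$ reverses each run, so $\Pop(w)=\rev(\delta_1)\cdots\rev(\delta_r)$; I will write $B_j=\rev(\delta_j)$ and denote by $M_j$ and $m_j$ the largest and smallest entries of $\delta_j$, which are respectively the first and last entries of $\delta_j$ and the last and first entries of $B_j$. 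The condition in the lemma is then exactly $M_j\le m_{j+1}+1$ for all $j\in[r-1]$.

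The second step is to record a convenient characterization: a permutation $v$ is layered if and only if every descending run of $v$ consists of consecutive integers. The forward implication is immediate, since the descending runs of a layered permutation are exactly its layers. For the converse, between two consecutive descending runs $\gamma,\gamma'$ of $v$ there is an ascent, which forces $\min\gamma<\max\gamma'$; because distinct descending runs occupy disjoint sets of values and each such set is an interval by hypothesis, this inequality upgrades to $\max\gamma<\min\gamma'$, so the runs partition $[n]$ into increasing intervals and $v$ is layered.

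With these reductions in hand I will analyze the descending runs of $\Pop(w)$. The key structural observation is that each block $B_j$ is increasing, so every descent of $\Pop(w)$ occurs at a boundary between two consecutive blocks. Consequently, a position interior to a descending run of $\Pop(w)$ both ends and begins a block, hence forms a singleton block. Thus a descending run of $\Pop(w)$ of length at least two has the form $M_i,m_{i+1},m_{i+2},\ldots,m_{i+\ell}$ (with $\ell\ge1$), where the intermediate blocks $B_{i+1},\ldots,B_{i+\ell-1}$ are singletons (so $m_{i+t}=M_{i+t}$ there) and each consecutive pair of entries corresponds to a descent at a block boundary, giving $M_{i+t}>m_{i+t+1}$ for $0\le t\le\ell-1$; runs of length one are trivially intervals.

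For the $(\Leftarrow)$ direction I will assume $M_j\le m_{j+1}+1$ for all $j$: at each descent-boundary inside a descending run this combines with $M_{i+t}>m_{i+t+1}$ to force $M_{i+t}=m_{i+t+1}+1$, so the values along the run decrease by exactly $1$ at each step and the run is a set of consecutive integers, whence $\Pop(w)$ is layered by the characterization. For the $(\Rightarrow)$ direction I will argue contrapositively: if $M_j\ge m_{j+1}+2$ for some $j$, then $M_j$ and $m_{j+1}$ are consecutive (a descent) inside some descending run of $\Pop(w)$, yet the intermediate value $m_{j+1}+1$ cannot lie in that run—every entry preceding $M_j$ in the run exceeds $M_j$ and every entry following $m_{j+1}$ is below $m_{j+1}$—so that run is not a set of consecutive integers and $\Pop(w)$ is not layered. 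I expect the main obstacle to be exactly this structural bookkeeping for $\Pop(w)$: one must remember that the blocks $B_j$ need not be the \emph{maximal} increasing runs of $\Pop(w)$ (adjacent blocks merge when $M_j<m_{j+1}$) and that a single descending run of $\Pop(w)$ can chain through several singleton blocks, so care is needed in matching descents to block boundaries—once that is handled, both implications fall out of the consecutive-integer criterion.
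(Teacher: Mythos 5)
Your proof is correct: the reduction to ``$\Pop(w)$ is layered,'' the characterization of layered permutations via descending runs being intervals of consecutive integers, and the bookkeeping of how the reversed blocks $B_j$ assemble into descending runs of $\Pop(w)$ all check out. The paper does not prove this lemma itself --- it cites Pudwell--Smith and remarks only that the characterization ``follows easily from the description of the pop-stack-sorting map'' --- and your argument is a correct, complete execution of exactly that route.
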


For example, $62135847$ is not $2$-pop-stack-sortable because the largest entry in the first descending run is $6$, the smallest entry in the second descending run is $3$, and $6>3+1$. 

In what follows, we will find it convenient to view permutations more generally as orderings of arbitrary finite subsets of $\mathbb Z$. For example, $3617$ is the one-line notation of a permutation of size $4$. Our convention is that $S_n$ is the set of permutations of $[n]$. The \dfn{standardization} of a permutation $w$ of size $n$ is the permutation in $S_n$ obtained by replacing the $i^\text{th}$-smallest entry in $w$ with $i$ for all $i\in[n]$. For example, the standardization of $3617$ is $2314$.

For $n\geq 1$, let $L(n,k)$ be the set of $2$-pop-stack-sortable permutations in $B_{n-1}$ that have either $2k$ or $2k+1$ descending runs. Let $M(n,k)$ be the set of permutations $w\in L(n,k)$ such that the last descending run of $w$ contains exactly $1$ entry. We make the convention $L(1,0)=M(1,0)=\{\varepsilon\}$, where $\varepsilon$ is the empty permutation. Furthermore, $L(1,k)=M(1,k)=\emptyset$ for $k\neq 0$. The only permutation in $B_{n-1}$ that has $0$ or $1$ descending runs is the decreasing permutation $w_0(S_{2n-2})=(2n-2)(2n-3)\cdots 321$, so $L(n,0)=\{w_0(S_{2n-2})\}$. All of the entries in a decreasing permutation are in the same descending run, so $M(n,0)=\emptyset$ whenever $n\geq 2$.  

\begin{proposition}\label{PropCox3}
Preserve the notation from above. If $n\geq 2$ and $1\leq k\leq n-1$, then \[|L(n,k)|=2\sum_{i=1}^{n-1}|L(i,k-1)|-|M(n-1,k-1)|\quad\text{and}\quad |M(n,k)|=2|L(n-1,k-1)|-|M(n-1,k-1)|.\]
\end{proposition}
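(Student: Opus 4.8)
The plan is to work inside $S_{2n-2}$ throughout, using the characterization from the previous section that elements of $B_{n-1}$ are exactly the $2$-pop-stack-sortable permutations in $S_{2n-2}$ whose plots are invariant under $180^\circ$ rotation about $(\frac{n-\frac12}{1},\ldots)$ — more precisely, the permutations fixed by $w\mapsto w_0ww_0$. The key structural fact is the central symmetry: if the descending runs of such a $w$ are $\delta_1,\ldots,\delta_m$ (so $m\in\{2k,2k+1\}$), then the $180^\circ$ rotation sends $\delta_j$ to (a reverse-complement of) $\delta_{m+1-j}$. In particular, $m$ odd forces a central run that is itself symmetric, while the parity bookkeeping explaining why ``$2k$ or $2k+1$ runs'' are grouped together comes precisely from whether the middle of the permutation splits a run or sits between two runs.

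First I would set up a recursive decomposition of a permutation $w\in L(n,k)$ by examining its structure near position $1$, i.e.\ the first descending run $\delta_1$ together with its symmetric partner (the last run). The idea is that removing the largest entry (the value $2n-2$, which by Lemma~\ref{LemCox7} and centrality must sit at the start of $\delta_1$) together with its symmetric image (the smallest entry $1$, at the end of the last run) produces a smaller centrally-symmetric $2$-pop-stack-sortable permutation. I would track how this deletion affects the run count: deleting these two extreme entries either leaves the number of runs unchanged or decreases it, and the $2\sum_{i=1}^{n-1}|L(i,k-1)|$ term should arise from summing over the possible ``size'' $i$ of the resulting smaller symmetric permutation after one layer is peeled off, with the factor $2$ accounting for a binary structural choice (e.g.\ whether the peeled-off layer merges or not, or two symmetric configurations). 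The correction term $-|M(n-1,k-1)|$ then subtracts the overcounted cases in which the last descending run of the smaller permutation is a singleton, since those were counted in two of the summands.

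For $|M(n,k)|$, the same peeling applies but now with the extra constraint that the last descending run of $w$ is a single entry; by central symmetry this pins down the first run as well, which removes the summation and collapses the count to the single relevant size, giving $2|L(n-1,k-1)|-|M(n-1,k-1)|$ with the same overcounting correction. The main obstacle I anticipate is getting the parity/run-count bookkeeping exactly right: I must verify carefully, using Lemma~\ref{LemCox7}, that the ``largest entry of $\delta_j$ is at most one more than the smallest entry of $\delta_{j+1}$'' condition is preserved (and exactly characterizes membership) under the deletion, and that the grouping of $2k$ with $2k+1$ runs is precisely what makes the recursion close — the odd case (central self-symmetric run) is where the two-to-one structural correspondence and the $M$-correction term interact most delicately. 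Once the bijective/recursive map is pinned down, translating it into the stated identities is routine casework on whether the deleted extremal entries form their own runs or merge into neighbors.
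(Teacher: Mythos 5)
Your overall instinct---peel a symmetric outer layer, get a two-to-one correspondence with a correction term indexed by $M$---matches the shape of the paper's argument, but the actual decomposition you propose does not work, and two of your key claims are false. First, the structural claim that the largest value $2n-2$ must sit at the start of $\delta_1$ (and, symmetrically, that $1$ sits at the end of the last run) is wrong: the identity permutation and $2143\in B_2$ are centrally symmetric and $2$-pop-stack-sortable, yet their largest entries begin the \emph{last} descending run. Lemma~\ref{LemCox7} only constrains adjacent runs; it says nothing about where the extreme values live. Second, even granting some version of your deletion, removing exactly one entry from each end in value always produces a permutation of size $2n-4$, i.e.\ an element of $B_{n-2}$, so your operation cannot generate the summation $\sum_{i=1}^{n-1}|L(i,k-1)|$ over \emph{variable} sizes $i$. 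The correct operation (the one the paper uses) is to delete the \emph{entire} first and last descending runs---which have equal length $\ell$ by central symmetry---and standardize; the resulting permutation lies in $L(n-\ell,k-1)$, and letting $\ell$ vary is exactly what produces the sum over $i$.

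Your accounting of the coefficients is also off. The sets $L(i,k-1)$ for distinct $i$ consist of permutations of distinct sizes and are therefore disjoint, so nothing is ``counted in two of the summands''; the term $-|M(n-1,k-1)|$ does not correct for double-counting across summands. Rather, each $v\in\bigcup_{i}L(i,k-1)$ generically has exactly two preimages under the run-removal map: the direct sum $w_0(S_{n-i})\oplus v\oplus w_0(S_{n-i})$, and a second permutation obtained by sliding the outermost points so that they interlock with the adjacent runs of $v$ while still satisfying the criterion of Lemma~\ref{LemCox7}. The exception is $v\in M(n-1,k-1)$, which has only one preimage (namely $1\oplus v\oplus 1$); subtracting $|M(n-1,k-1)|$ from $2\sum_i|L(i,k-1)|$ corrects for precisely these deficient fibers. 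The formula for $|M(n,k)|$ then follows by restricting this map to permutations whose outer runs are singletons, which forces the image to lie in $L(n-1,k-1)$. As written, your proposal would need to be rebuilt around the run-removal map before the stated identities could be derived.
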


\begin{proof}
This follows from Lemma~\ref{LemCox7} and the fact that a permutation is in a hyperoctahedral group if and only if its plot is invariant under $180^\circ$ rotation. Indeed, suppose we are given $w\in L(n,k)$. If we remove the first and last descending runs and standardize the resulting permutation, we obtain a permutation $\alpha(w)\in\bigcup_{i=1}^{n-1}L(i,k-1)$. Hence, we have a map $\alpha:L(n,k)\to \bigcup_{i=1}^{n-1}L(i,k-1)$. Suppose $v\in L(i,k-1)$ for some $1\leq i\leq n-1$. If $v\in M(n-1,k-1)$, then, upon inspecting the characterization of $2$-pop-stack-sortable permutations in Lemma~\ref{LemCox7}, we find that $|\alpha^{-1}(v)|=1$. Indeed, the only element of $\alpha^{-1}(v)$ is (in the notation of Section~\ref{Subsec:PopMap}) $1\oplus v\oplus 1$. Now suppose $v\not\in M(n-1,k-1)$. In this case, there are exactly $2$ elements of $\alpha^{-1}(v)$. The first is $w_0(S_{n-i})\oplus v\oplus w_0(S_{n-i})$ (recall from Section~\ref{Subsec:PopMap} that $w_0(S_{n-i})$ is the decreasing permutation of size $n-i$). The second is the permutation whose plot is obtained from that of $w_0(S_{n-i})\oplus v\oplus w_0(S_{n-i})$ by sliding the rightmost point down so that it is immediately below the highest point in the second-to-last descending run and sliding the leftmost point up so that it is immediately above the lowest point in the second descending run. For example, suppose $n=7$, $i=4$, and $v=264315$. The first element of $\alpha^{-1}(v)$ is $w_0(S_3)\oplus v\oplus w_0(S_3)=3\,2\,1\,5\,9\,7\,6\,4\,8\,12\,11\,10$. The other element of $\alpha^{-1}(v)$ is $5\,2\,1\,4\,10\,7\,6\,3\,9\,12\,11\,8$. The latter permutation was obtained from $3\,2\,1\,5\,9\,7\,6\,4\,8\,12\,11\,10$ by decreasing the last entry so that it is $1$ less than the largest entry in the second-to last descending run (i.e., $8=9-1$) and increasing the first entry so that it is $1$ more than the smallest entry in the second descending run (i.e., $5=4+1$). It follows that $|L(n,k)|$ is equal to \[|M(n-1,k-1)|+2\left|\bigcup_{i=1}^{n-1}L(i,k-1)\setminus M(n-1,k-1)\right|=2\sum_{i=1}^{n-1}|L(i,k-1)|-|M(n-1,k-1)|.\] 

Note that if $w\in M(n,k)$, then $\alpha(w)\in L(n-1,k-1)$. Let $\beta:M(n,k)\to L(n-1,k-1)$ be the restriction of $\alpha$ to $M(n,k)$. If $v\in M(n-1,k-1)$, then we saw above that the unique element of $\alpha^{-1}(v)$ is $1\oplus v\oplus 1$, which is in $M(n,k)$. Thus, $|\beta^{-1}(v)|=1$ in this case. If $v\in L(n-1,k-1)\setminus M(n-1,k-1)$, then we saw above how to construct the two elements of $\alpha^{-1}(v)$. Both of these elements are actually in $M(n,k)$, so $|\beta^{-1}(v)|=2$ in this case. This shows that $|M(n,k)|$ is equal to \[2|L(n-1,k-1)\setminus M(n-1,k-1)|+|M(n-1,k-1)|=2|L(n-1,k-1)|-|M(n-1,k-1)|. \qedhere\]
\end{proof}

\begin{proof}[Proof of Theorem~\ref{ThmCox3}]
In \cite{Pudwell}, Pudwell and Smith define $a(n,k)$ to be the number of $2$-pop-stack-sortable permutations in $S_n$ that have exactly $k+1$ descending runs. They also define $b(n,k)$ to be the number of $2$-pop-stack-sortable permutations $w\in S_n$ with exactly $k+1$ descending runs such that the last descending run of $w$ has exactly one entry. They prove (see \cite[Proposition~1]{Pudwell}) that $a(n,k)$ and $b(n,k)$ satisfy the same recurrence relation that we found for $|L(n,k)|$ and $|M(n,k)|$ in Proposition~\ref{PropCox3}, even with the same initial conditions. It follows that $|L(n,k)|=a(n,k)$ and $|M(n,k)|=b(n,k)$. This proves the first statement in Theorem~\ref{ThmCox3} and shows that \[\sum_{n\geq 1}\left|\Pop_{B_n}^{-2}(e)\right|z^n=\sum_{n\geq 1}\left|\Pop_{S_{n+1}}^{-2}(e)\right|z^n.\] Pudwell and Smith also found that the generating function $\sum_{n\geq 0}\left|\Pop_{S_n}^{-2}(e)\right|z^n$ that counts $2$-pop-stack-sortable permutations is equal to $\dfrac{1-z-z^2-z^3}{1-2z-z^2-2z^3}$ (see \cite[Corollary~1]{Pudwell}). Consequently, \[\sum_{n\geq 1}\left|\Pop_{S_{n+1}}^{-2}(e)\right|z^n=\frac{1}{z}\left(\frac{1-z-z^2-z^3}{1-2z-z^2-2z^3}-1\right)-1=\frac{2z(1+z+z^2)}{1-2z-z^2-2z^3}.\qedhere\]
\end{proof}

\subsection{Generating Functions for $t$-Pop-Stack-Sortable Elements in Type $B$}

We now proceed to prove Theorem~\ref{ThmCox5}, which states that for each fixed $t\geq 0$, the generating function that counts $t$-pop-stack-sortable permutations in hyperoctahedral groups is rational. Our proof makes heavy use of the tools that Claesson and Gu{\dh}mundsson developed in \cite{ClaessonPop}. 

Fix an integer $t\geq 1$. Consider a $t$-pop-stack-sortable permutation $w\in S_n$. Let us write out the one-line notations of $w,\Pop(w),\Pop^2(w),\ldots,\Pop^{t-1}(w)$, with $\Pop^k(w)$ directly below $\Pop^{k-1}(w)$ for each $k\in[t-1]$. Draw boxes around the descending runs of each of these permutations. The resulting array of numbers and boxes is called the \dfn{sorting trace} of $w$. If we delete the numbers in this sorting trace, we obtain an array of boxes called the \dfn{sorting plan} of $w$. We call $n$ and $t$ the \dfn{length} and \dfn{order}, respectively, of the sorting trace and the sorting plan. For a concrete example, see Figure~\ref{FigCox3}, which shows the sorting trace and sorting plan of order $4$ of the permutation $6524713$. Let $\SP_n(t)$ denote the set of all sorting plans of length $n$ and order $t$ (i.e., the set of all sorting plans of $t$-pop-stack-sortable permutations in $S_n$). Let $\SP(t)=\bigcup_{n\geq 1}\SP_n(t)$. Each box in the sorting trace or sorting plan of $w$ is called a \dfn{block}; the \dfn{length} of a block is the number of entries that it contains in the sorting trace. 

\begin{figure}[ht]
  \begin{center}{\includegraphics[height=1.811cm]{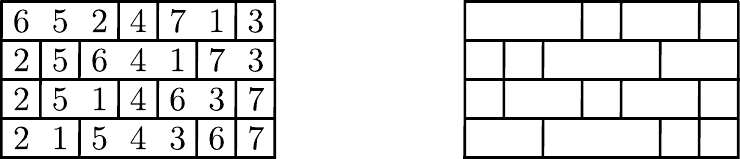}}
  \end{center}
  \caption{The sorting trace (left) and sorting plan (right) of order $4$ of $6524713$.}\label{FigCox3}
\end{figure}

Notice that the sorting trace of $w$ is completely determined by the sorting plan of $w$. Indeed, for each $k\in[t]$, the permutation $\Pop^k(w)$ is obtained by reversing the descending runs of $\Pop^{k-1}(w)$. The lengths of the descending runs of $\Pop^{k-1}(w)$ are precisely the lengths of the blocks in the $k^\text{th}$ row of the sorting plan of $w$. Thus, $\Pop^{k-1}(w)$ can be obtained by inserting $\Pop^k(w)$ into the $k^\text{th}$ row of the sorting plan of $w$ and then reversing the entries within each block. Since $\Pop^t(w)=e$, this shows that we can determine $w$, and, hence, the sorting trace of $w$, from the sorting plan of $w$. It follows that $t$-pop-stack-sortable permutations in $S_n$ are in bijection with sorting plans of length $n$ and order $t$.

Define a \dfn{bar code} of order $t$ to be a sequence of $t$ vertical bars and blank spaces, each of height $1$, arranged vertically from top to bottom. If we associate each vertical bar with the digit $0$ and associate each blank space with the digit $1$, then a bar code corresponds to an element of $\{0,1\}^t$. We associate each element $(x_1,\ldots,x_t)$ of $\{0,1\}^t$ with the integer $x_t+2x_{t-1}+2^2x_{t-2}+\cdots+2^{t-1}x_1\in\Sigma_t$, where $\Sigma_t=\{0,\ldots,2^t-1\}$. Hence, we have a bijective correspondence between bar codes of order $t$ and elements of $\Sigma_t$. For example, with $t=4$, the bar codes in Figure~\ref{FigCox4} correspond, from left to right, to the vectors $(0,0,0,0)$, $(0,0,1,0)$, and $(1,1,0,1)$. These, in turn, correspond to the numbers $0$, $2$, and $13$ in $\Sigma_4$.  

\begin{figure}[ht]
  \begin{center}{\includegraphics[height=1.811cm]{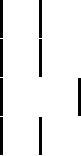}}
  \end{center}
  \caption{Three bar codes of order $4$.}\label{FigCox4}
\end{figure}

A \dfn{segment} of length $n$ and order $t$ is a sequence of $n+1$ bar codes of order $t$ that are arranged from left to right and separated by spaces of width $1$, along with $t+1$ separating horizontal lines of length $n$. For instance, the left side of Figure~\ref{FigCox5} depicts a segment of length $4$ and order $4$. Being (essentially) a sequence of bar codes, a segment $\sigma$ corresponds to a word $\psi(\sigma)$ over the alphabet $\Sigma_t$. For example, if $\sigma$ is the segment on the left of Figure~\ref{FigCox5}, then $\psi(\sigma)$ is the word $9\, 10\, 5\, 5\, 10$. Notice that $\psi$ is an injection from the set of segments of order $t$ into the set of words over $\Sigma_t$. Every sorting plan is a segment; we will be primarily interested in the set $\psi(\SP(t))$ of words over $\Sigma_t$ that correspond to sorting plans of order $t$. The number of $t$-pop-stack-sortable permutations in $S_n$ is equal to $|\psi(\SP_n(t))|$, so the generating function whose rationality was demonstrated by Claesson and Gu{\dh}mundsson in \cite{ClaessonPop} is $\sum_{n\geq 1}|\psi(\SP_n(t))|z^n$. 

We say a segment $\sigma$ \dfn{contains} a segment $\sigma'$ if the word $\psi(\sigma)$ contains the word $\psi(\sigma')$ as a factor (i.e., a contiguous subword). For example, if $\sigma$ is the sorting plan on the right side of Figure~\ref{FigCox3} and $\sigma'$ is the segment on the left side of Figure~\ref{FigCox5}, then $\sigma$ contains $\sigma'$ because $\psi(\sigma)=0\,9\,10\,5\,5\,10\,4\,0$ and $\psi(\sigma')=9\,10\,5\,5\,10$.  

\begin{figure}[ht]
  \begin{center}{\includegraphics[height=1.811cm]{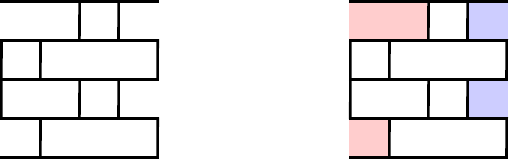}}
  \end{center}
  \caption{On the left is a segment of length $4$ and order $4$. On the right, the left-open (respectively, right-open) blocks of this segment have been shaded in pink (respectively, blue).}\label{FigCox5}
\end{figure}

Imagine enclosing a segment $\sigma$ of length $n$ and order $t$ inside a rectangle $R$ of width $n$ and height $t$. The connected components of the complement of $\sigma$ in the interior of $R$ are called the \dfn{blocks} of $\sigma$. A block is \dfn{left-open} if it touches $R$ on its left side but does not touch $\sigma$ on its left side. A block is \dfn{right-open} if it touches $R$, but not $\sigma$, on its right side. On the right side of Figure~\ref{FigCox5}, the left-open and right-open blocks of the segment have been shaded pink and blue, respectively. We say a segment is \dfn{bounded} if each of its blocks has length at most $3$. 

An \dfn{operation array} is a segment $\sigma$ such that $\psi(\sigma)$ starts and ends with $0$ (equivalently, $\sigma$ has no left-open or right-open blocks). Suppose we are given an operation array $\sigma$ of length $n$ and order $t$. Place the identity permutation $123\cdots n$ below $\sigma$. Now fill the rows of $\sigma$, one at a time, from bottom to top. At each step, copy the numbers in the $(k+1)^\text{th}$ row into the $k^\text{th}$ row and then reverse the numbers within each block in the $k^\text{th}$ row. Let $T$ be the resulting array of blocks and numbers, excluding the identity permutation at the bottom. The array $T$ is called the \dfn{semitrace} of $\sigma$ (see Figure~\ref{FigCox6}). Consider a pair of integers $(a,b)$ with $1\leq a<b\leq n$. Let $X_{a,b}(T)$ be the collection of blocks in $T$ that contain $a$ or $b$ and do not lie in the first row of $T$; let $X_{a,b}(\sigma)$ be the corresponding collection of blocks in $\sigma$. Let $\sigma_{a,b}$ be the smallest segment contained in $\sigma$ that includes all of the blocks in $X_{a,b}(\sigma)$. We say the pair $(a,b)$ is a \dfn{violating pair} of the semitrace $T$ if there is some row of $T$ that either contains $a$ and $b$ in the same block with $a$ immediately before $b$ or contains $a$ and $b$ in different blocks with $b$ immediately before $a$. We say the segment $\sigma_{a,b}$ is \dfn{forbidden} if $(a,b)$ is a violating pair in $T$. Claesson and Gu{\dh}mundsson showed that whether or not the segment $\sigma_{a,b}$ is forbidden only depends on the segment $\sigma_{a,b}$ itself; it does not depend on the pair $(a,b)$ or the operation array $\sigma$ in which $\sigma_{a,b}$ is embedded. 

\begin{figure}[ht]
  \begin{center}{\includegraphics[height=2.373cm]{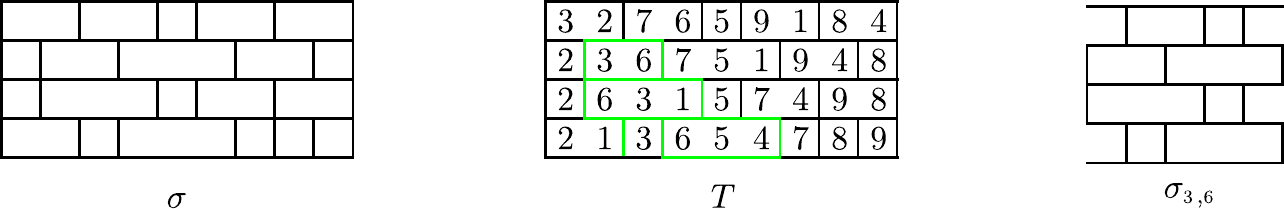}}
  \end{center}
  \caption{An operation array $\sigma$ (left) and its corresponding semitrace $T$ (middle). The blocks in $X_{3,6}(T)$ are shown in green; the corresponding blocks in $\sigma$ form the set $X_{3,6}(\sigma)$. The segment $\sigma_{3,6}$ (right) is the smallest segment contained in $\sigma$ that includes all of the blocks in $X_{3,6}(\sigma)$. The violating pairs of $T$ are $(3,6)$ (because of the second row) and $(5,6)$ (because of the first row). Since $(3,6)$ is a violating pair of $T$, the segment $\sigma_{3,6}$ is forbidden.}\label{FigCox6}
\end{figure}

\begin{lemma}[{\cite[Proposition~3.9]{ClaessonPop}}]\label{LemCharacterizeSPs}
A segment $\sigma$ of order $t$ is a sorting plan if and only if the following conditions hold: 
\begin{itemize}
\item $\sigma$ is an operation array; 
\item every block of $\sigma$ that is not in the first row of $\sigma$ has length at most $3$; 
\item $\sigma$ does not contain any bounded forbidden segments. 
\end{itemize}
\end{lemma}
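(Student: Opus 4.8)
The plan is to prove both directions by passing through the semitrace. The organizing principle is that a segment $\sigma$ is the sorting plan of a $t$-pop-stack-sortable permutation precisely when the semitrace $T$ obtained by filling $\sigma$ upward from the identity is a genuine sorting trace, and that this in turn holds exactly when, in every row of $T$, the blocks coincide with the descending runs of the permutation filling that row. Indeed, under this coincidence the prescribed move ``reverse within each block'' is literally an application of $\Pop$, so reading $T$ from top to bottom gives $w,\Pop(w),\dots,\Pop^{t-1}(w)$ for $w$ the top permutation, and the identity sitting below the last row witnesses $\Pop^{t}(w)=e$; the converse is immediate from the definition of a sorting trace. The whole lemma thus becomes a dictionary translating the two ways a block can fail to be a maximal descending run into the three stated conditions.

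For the necessity direction, assume $\sigma$ is a sorting plan. In each row the descending runs partition the permutation and the extreme positions are run boundaries, so the leftmost and rightmost bar codes consist entirely of bars; hence $\sigma$ is an operation array. For the length bound I would isolate the self-contained claim that, for every permutation $\rho$, each descending run of $\Pop(\rho)$ has length at most $3$: a run of length $4$ would straddle three consecutive ascending boundaries of $\rho$ with two singleton runs $(b_1),(b_2)$ in the middle, forcing simultaneously $b_1<b_2$ (the runs are separated by an ascent in $\rho$) and $b_1>b_2$ (they are consecutive entries of a decreasing run of $\Pop(\rho)$), a contradiction. Every row of $\sigma$ other than the first is $\Pop^{j}(w)$ with $j\ge 1$, so its blocks have length at most $3$. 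Finally, because all blocks of the sorting trace are genuine maximal descending runs, the entries strictly decrease inside each block and ascend across each block boundary, so no pair can violate and $\sigma$ has no forbidden sub-segment whatsoever.

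For the sufficiency direction, assume the three conditions. Since $\sigma$ is an operation array it has a well-defined semitrace $T$, and by the organizing principle it suffices to show that in each row the blocks are the descending runs. I would argue this by contraposition and locality: if some block above the first row were not a maximal decreasing run, then either some block contains an ascent or two adjacent blocks are separated by a descent, and in either case one exhibits a value pair $(a,b)$ that is a violating pair of $T$; the blocks of $T$ carrying this witness have length at most $3$ by the second hypothesis, so the minimal segment $\sigma_{a,b}$ they span is bounded and, being forbidden, contradicts the third hypothesis. Here I would lean on the locality statement established just before the lemma, namely that forbiddenness of $\sigma_{a,b}$ is intrinsic to $\sigma_{a,b}$, which is exactly what lets a violation be detected inside a single bounded window rather than in the unbounded ambient array.

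The delicate point, and the step I expect to be the main obstacle, is the bounded/local reduction together with the bookkeeping for the first row. One must verify that the length bound confines the blocks recording any given violating pair to the short lower-row blocks of $X_{a,b}(\sigma)$, so that $\sigma_{a,b}$ is genuinely bounded and the finite catalogue of bounded forbidden patterns suffices; and one must confirm that once all higher rows are consistent the reconstructed top permutation $w$ has descending runs exactly equal to the first-row blocks. The latter is subtle because the first row may carry arbitrarily long blocks and is excluded from the sets $X_{a,b}$; the resolution is that a first-row inconsistency still produces a violating pair (the definition ranges over every row of $T$), whose witnessing segment is assembled from the bounded lower-row blocks, so that such an inconsistency is again caught by a bounded forbidden segment. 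Making this last implication airtight, that every first-row defect forces a bounded forbidden sub-segment, is where the argument requires the most care.
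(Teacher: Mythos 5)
You should first be aware that the paper does not prove this statement at all: it is imported verbatim as \cite[Proposition~3.9]{ClaessonPop}, so there is no in-paper proof to compare against. Judged on its own merits, your proposal has the right architecture --- identify ``$\sigma$ is a sorting plan'' with ``the semitrace $T$ built upward from the identity has, in every row, blocks equal to the descending runs,'' and translate the two failure modes into violating pairs. Your necessity direction is essentially complete: the operation-array condition is immediate, your length-$3$ argument for blocks below the first row is exactly the paper's proof of Lemma~\ref{LemCox9} (two forced singleton runs that must simultaneously ascend and descend), and the absence of forbidden factors follows from the absence of violating pairs via the locality statement the paper quotes.

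The genuine gap is in the sufficiency direction, at precisely the step you flag but do not close: from a violating pair $(a,b)$ you must produce a \emph{bounded} forbidden segment contained in $\sigma$, and the natural candidate $\sigma_{a,b}$ need not be bounded. By definition $\sigma_{a,b}$ is the smallest factor of $\sigma$ containing all blocks of $X_{a,b}(\sigma)$; those blocks have length at most $3$ by hypothesis, but the factor $\sigma_{a,b}$ also carries the restriction of the first row of $\sigma$ to its window, and a first-row block of $\sigma$ can be arbitrarily long and can overlap that window in a stretch of length exceeding $3$. In that case $\sigma_{a,b}$ is forbidden but not bounded, and the third hypothesis says nothing about it. So the implication ``violating pair $\Rightarrow$ bounded forbidden factor'' requires an additional argument (e.g.\ replacing $(a,b)$ by a better-chosen violating pair, or showing that a violation overlapping a long first-row block forces a second, localizable violation); this is exactly the technical content of Claesson and Gu{\dh}mundsson's proof that your sketch does not reproduce. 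As written, the proposal is an accurate roadmap with the decisive step still open.
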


\begin{lemma}[{\cite[Lemma~3.12]{ClaessonPop}}]\label{LemFiniteBFS}
There are finitely many bounded forbidden segments of order $t$. 
\end{lemma}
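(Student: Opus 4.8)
The plan is to reduce the statement to a length bound and then exploit the finiteness of the underlying alphabet. Concretely, I would show that there is a constant $N(t)$, depending only on $t$, such that every bounded forbidden segment of order $t$ has length at most $N(t)$. Granting this, the result is immediate: the map $\psi$ is an injection from segments of order $t$ into words over the finite alphabet $\Sigma_t=\{0,\ldots,2^t-1\}$, and a segment of length $n$ corresponds to a word of length $n+1$; since there are only finitely many words over $\Sigma_t$ of length at most $N(t)+1$, there are only finitely many segments of order $t$ and length at most $N(t)$, hence only finitely many bounded forbidden segments.

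To produce the length bound, I would fix a bounded forbidden segment and write it as $\sigma_{a,b}$ arising from some operation array $\sigma$ with semitrace $T$ in which $(a,b)$ is a violating pair. Let $c_k$ and $d_k$ denote the columns occupied by the entries $a$ and $b$ in row $k$ of $T$, with the identity permutation regarded as row $t+1$. The length of $\sigma_{a,b}$ is controlled by the horizontal extent of the blocks of $X_{a,b}(\sigma)$, namely the blocks in rows $2,\ldots,t$ containing $a$ or $b$; since each such block has length at most $3$, it extends at most $2$ columns to either side of the entry it contains, so it suffices to bound the spread of the columns $c_2,\ldots,c_t,d_2,\ldots,d_t$.

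The key estimate is that each entry migrates slowly between consecutive rows. Passing from row $k+1$ to row $k$ copies the entries and then reverses them within the blocks of row $k$; because every block of $\sigma_{a,b}$ has length at most $3$ (this is precisely the boundedness hypothesis), such a reversal displaces an entry by at most $2$. Hence $|c_k-c_{k+1}|\le 2$ and $|d_k-d_{k+1}|\le 2$ throughout rows $2,\ldots,t$, so the columns of $a$ lie in an interval of width at most $2(t-2)$, and likewise for $b$. It then remains to anchor the $a$-trajectory to the $b$-trajectory. If the violating row $k_0$ lies in $\{2,\ldots,t\}$, then $a$ and $b$ are horizontally adjacent there, so $|c_{k_0}-d_{k_0}|=1$; combining this with the two width bounds places all of $c_2,\ldots,c_t,d_2,\ldots,d_t$ in a single interval of width $O(t)$, and adding the block widths yields $N(t)=O(t)$.

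The main obstacle is the case $k_0=1$, the first row being exactly the row excluded from $X_{a,b}$, where the anchoring step above does not directly apply. Here I would examine the single reversal from row $2$ to row $1$: if $a$ and $b$ become adjacent in a common first-row block, then reversal preserves their gap and they were already adjacent in row $2$; if instead they straddle the boundary between two adjacent first-row blocks $B$ and $B'$, a short edge computation shows that $c_2$ and $d_2$ are the outer endpoints of $B'$ and $B$, so $|c_2-d_2|$ is at most the combined lengths of $B$ and $B'$, hence at most $5$ once boundedness forces these blocks (as they appear in $\sigma_{a,b}$) to have length at most $3$. Either way the two trajectories are anchored to within an absolute constant, and the $O(t)$ length bound follows as before. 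The only genuinely delicate bookkeeping concerns the boundary blocks that get truncated when one passes from $\sigma$ to the subsegment $\sigma_{a,b}$, but truncation only shortens blocks and therefore cannot weaken any of the displacement bounds used above.
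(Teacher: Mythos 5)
The paper does not prove this lemma at all---it is imported verbatim from Claesson and Gu{\dh}mundsson as \cite[Lemma~3.12]{ClaessonPop}---so there is no in-paper argument to compare against. Your proposal is correct and is essentially the standard argument for this fact: reduce to a uniform length bound $N(t)=O(t)$, using that a reversal inside a block of length at most $3$ displaces an entry by at most $2$, so the column trajectories of $a$ and $b$ through rows $2,\ldots,t+1$ each lie in an interval of width $O(t)$, and the violating row pins the two trajectories to within an absolute constant of each other; finiteness then follows from the finiteness of $\Sigma_t$. One point deserves slightly more care than you give it. In the case where the violating row is row $1$ and $a,b$ straddle two adjacent first-row blocks $B$ and $B'$, the bound $|c_2-d_2|\leq |B|+|B'|-1$ involves the \emph{full} lengths of $B$ and $B'$ in $\sigma$, and these blocks are excluded from $X_{a,b}$ by definition, so the boundedness of $\sigma_{a,b}$ does not apply to them automatically; your closing remark that truncation ``cannot weaken any of the displacement bounds'' does not cover this. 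The gap closes easily, though: your own computation shows $c_2$ and $d_2$ are the outer endpoints of $B'\cup B$, and since the row-$2$ blocks containing $a$ and $b$ belong to $X_{a,b}$, the horizontal span of $\sigma_{a,b}$ contains both endpoints and hence all of $B\cup B'$; thus $B$ and $B'$ appear untruncated in $\sigma_{a,b}$ and the hypothesis forces $|B|,|B'|\leq 3$, giving $|c_2-d_2|\leq 5$ as you claim.
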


To prove their main result, Claesson and Gu{\dh}mundsson employed the theory of formal languages; we will do the same. We recall the basic notions from this theory, referring the reader to \cite{Linz} for more information. 

Let $\mathcal A$ be a nonempty finite alphabet. A \dfn{language} over $\mathcal A$ is a collection of finite (possibly empty) words over $\mathcal A$. Given a language $\mathcal L$, let $\mathcal L^*$ denote the set of all finite words, including the empty word, that can be obtained by concatenating words from $\mathcal L$. The \dfn{concatenation} of two languages $\mathcal L_1$ and $\mathcal L_2$ is the language $\mathcal L_1\mathcal L_2=\{xy:x\in\mathcal L_1,y\in\mathcal L_2\}$. The \dfn{reverse} of a language $\mathcal L$ is $\rev(\mathcal L)=\{\rev(x):x\in\mathcal L\}$, where, for $a_1,\ldots,a_k\in\mathcal A$, we write $\rev(a_1\cdots a_k)=a_k\cdots a_1$. 

A language is \dfn{regular} if it is the set of words accepted by a deterministic finite automaton. The following lemma lists several standard properties of the collection of regular languages; we refer to \cite[Chapter 4]{Linz} for its proof.

\begin{lemma}\label{LemRegular}
Let $\mathcal A$ be an alphabet. Every finite language over $\mathcal A$ is regular. If $\mathcal L,\mathcal L_1,\ldots,\mathcal L_k$ are regular languages over $\mathcal A$, then \[\mathcal L^*,\quad \bigcup_{i=1}^k\mathcal L_i,\quad\bigcap_{i=1}^k\mathcal L_i,\quad\mathcal L_1\mathcal L_2,\quad\mathcal A^*\setminus\mathcal L,\quad\rev(\mathcal L)\] are all regular.
\end{lemma}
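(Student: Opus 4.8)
The plan is to prove Lemma~\ref{LemRegular} by verifying each closure property in turn, appealing to standard constructions on deterministic finite automata (DFAs) and nondeterministic finite automata (NFAs). Since the paper explicitly directs the reader to \cite[Chapter~4]{Linz} for the proof, I would treat this as a recollection of textbook facts rather than a from-scratch development; nonetheless, I would organize the argument around the equivalence between DFAs, NFAs, and regular languages, since this equivalence (Rabin--Scott subset construction) is what makes several of the constructions painless.

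First I would dispatch the claim that every finite language is regular: a finite language $\mathcal{L}=\{w_1,\dots,w_m\}$ is accepted by the finite automaton obtained by taking, for each word $w_i$, a simple path-shaped automaton recognizing exactly $\{w_i\}$, and then forming their union (handled below), or more directly by building a trie whose accepting states are the leaves corresponding to the $w_i$. For the Kleene star $\mathcal{L}^*$, union $\bigcup_{i=1}^k \mathcal{L}_i$, and concatenation $\mathcal{L}_1\mathcal{L}_2$, I would use the standard NFA-with-$\varepsilon$-transitions constructions: star adds a new start/accept state with $\varepsilon$-loops back from the old accept states; union takes the disjoint union of the automata with a fresh start state having $\varepsilon$-transitions into each; concatenation glues the accept states of the first automaton to the start state of the second via $\varepsilon$-transitions. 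In each case the resulting NFA is converted back to a DFA by the subset construction, establishing regularity. For union of finitely many languages, one simply iterates the two-language case (or builds the product/power construction directly).

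Next I would handle complementation $\mathcal{A}^* \setminus \mathcal{L}$: given a DFA for $\mathcal{L}$ that is \emph{complete} (every state has an outgoing transition on every letter of $\mathcal{A}$), swapping the accepting and non-accepting states yields a DFA recognizing the complement. The intersection $\bigcap_{i=1}^k \mathcal{L}_i$ then follows either from De Morgan's law (complement, union, complement) or, more efficiently, from the product automaton construction, whose states are tuples of states, one per factor, with componentwise transitions and acceptance requiring all components to accept. Finally, for the reverse $\operatorname{rev}(\mathcal{L})$, I would reverse every transition arrow in a DFA for $\mathcal{L}$, interchange the roles of start and accept states (introducing a new start state with $\varepsilon$-transitions to the old accepting states if there were several), and again apply the subset construction to obtain a DFA.

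The main obstacle here is not mathematical depth but exposition: all of these are classical results, so the genuine work is deciding how much detail to reproduce versus cite. Given that the paper already signals reliance on \cite[Chapter~4]{Linz}, I expect the cleanest writeup is simply to state that each listed operation corresponds to a standard automaton construction and cite the reference, perhaps naming the subset construction and the product construction explicitly so the reader sees which tool does which job. If a self-contained proof were desired, the only subtle point requiring care is ensuring DFAs are completed before complementation (otherwise missing transitions would be incorrectly treated as rejecting, which is actually fine, but completeness makes the swap-of-accepting-states argument transparent), and handling the $\varepsilon$-transitions introduced by the star, union, concatenation, and reverse constructions correctly when converting back to a DFA.
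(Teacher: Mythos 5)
Your proposal is correct and matches the paper exactly: the paper gives no proof of this lemma, simply citing \cite[Chapter 4]{Linz} for these standard closure properties, and the automaton constructions you recall (subset construction, product automaton, accept-state swap, arrow reversal) are the standard ones found there. One small quibble: your parenthetical claim that leaving a DFA incomplete before complementation is ``actually fine'' is wrong --- a word whose run dies on a missing transition is rejected both before and after swapping accept states, so completion is genuinely necessary, not merely a matter of transparency; your stated prescription (complete first, then swap) is nonetheless correct.
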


Let $\mathcal A_n$ denote the set of words over $\mathcal A$ of length $n$. The crucial fact that we need states that if $\mathcal L$ is a regular language, then the generating function $\sum_{n\geq 1}|\mathcal L\cap\mathcal A_n|z^n$ is rational (see, e.g., \cite[Proposition I.2]{Flajolet}). Claesson and Gu{\dh}mundsson showed that $\psi(\SP(t))$ is a regular language over the alphabet $\Sigma_t$. We have seen that the words in $\psi(\SP(t))$ are in bijection with $t$-pop-stack-sortable permutations, so the regularity of $\psi(\SP(t))$ implies the rationality of the generating function that counts $t$-pop-stack-sortable permutations in symmetric groups.

We are almost ready to prove Theorem~\ref{ThmCox4}. We just need to discuss one additional concept. Say a segment $\sigma$ is \dfn{symmetric} if the word $\psi(\sigma)$ is a palindrome. In other words, a segment is symmetric if it is left unchanged when we reflect it through a central vertical axis. Observe that a segment is symmetric if and only if each of its rows is a symmetric segment of order $1$. Define a \dfn{type-$B$ sorting plan} to be a symmetric sorting plan of even length. 

\begin{lemma}\label{LemCox8}
Let $\sigma$ be the sorting plan of order $t$ of a $t$-pop-stack-sortable permutation $w\in S_{2n}$. Then $\sigma$ is a type-$B$ sorting plan if and only if $w\in B_n$. 
\end{lemma}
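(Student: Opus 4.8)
The plan is to show that the type-$B$ sorting plan condition---symmetry plus even length---is exactly equivalent to the condition that $w$ is invariant under the $180^\circ$ rotation automorphism $w \mapsto w_0 w w_0$ on $S_{2n}$. Recall from the discussion preceding this lemma that $w \in B_n$ if and only if the plot of $w$ is invariant under $180^\circ$ rotation about $(\tfrac{n+1}{2},\tfrac{n+1}{2})$, which is precisely the statement that $w_0 w w_0 = w$. Since the length $2n$ is even by hypothesis, the even-length requirement in the definition of a type-$B$ sorting plan is automatic; the entire content is the equivalence between $w \in B_n$ and $\sigma$ being symmetric.

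The key observation I would isolate first is that the rotation map interacts well with the pop-stack-sorting operator. Specifically, I would verify the commutation relation $\Pop_{S_{2n}}(w_0 w w_0) = w_0\, \Pop_{S_{2n}}(w)\, w_0$ for all $w \in S_{2n}$. This should follow from equation \eqref{EqCox1} together with the fact that $w \mapsto w_0 w w_0$ is an automorphism of $S_{2n}$ permuting the simple generators: since $D_R(w_0 w w_0) = w_0 D_R(w) w_0$ and $w_0(w_0 J w_0) = w_0\, w_0(J)\, w_0$ for $J \subseteq S$, one computes $\Pop(w_0 w w_0) = (w_0 w w_0) w_0(D_R(w_0 w w_0)) = w_0 w w_0 \cdot w_0 w_0(D_R(w)) w_0 = w_0 (w\, w_0(D_R(w))) w_0 = w_0 \Pop(w) w_0$. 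Iterating gives $\Pop^k(w_0 w w_0) = w_0 \Pop^k(w) w_0$ for all $k$.

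Next I would translate the geometric rotation into its effect on the sorting plan. The key point is that applying $w \mapsto w_0 w w_0$ reverses the one-line notation and complements the values, hence it reverses the sequence of descending runs of each permutation left-to-right while preserving their lengths. Consequently, the row of the sorting plan corresponding to $\Pop^k(w_0 w w_0)$ is the horizontal mirror image of the row corresponding to $\Pop^k(w)$. Using the commutation relation from the previous paragraph, the sorting plan of $w_0 w w_0$ is exactly the horizontal reflection of the sorting plan of $w$. Therefore $w = w_0 w w_0$ holds if and only if the sorting plan of $w$ equals its own horizontal reflection, i.e.\ if and only if $\sigma$ is symmetric. Combining this with the automatic even-length condition yields the claimed equivalence.

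The main obstacle I anticipate is making the block-level correspondence fully rigorous: I must argue carefully that the descending-run structure of each row transforms under the rotation exactly by left-to-right reversal with preserved block lengths, and hence that the bar-code encoding $\psi$ of the rotated sorting plan is the reversal (palindrome-conjugate) of $\psi(\sigma)$. The subtlety is that a sorting plan records descending-run \emph{lengths} across all $t$ rows simultaneously, so I need the reversal to act compatibly on every row at once---which is where the commutation relation $\Pop^k(w_0 w w_0) = w_0 \Pop^k(w) w_0$ does the essential work, guaranteeing that reflecting the top row forces the reflection of every lower row in a consistent way. Once the per-row reversal of descending runs is established (a direct consequence of the explicit one-line description of $w_0 w w_0$), the equivalence between symmetry of $\sigma$ and invariance $w = w_0 w w_0$ is immediate, and the lemma follows.
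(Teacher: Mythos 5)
Your proposal is correct, but it takes a genuinely different route from the paper's. The paper argues the two directions separately and asymmetrically: for the forward direction it uses that $\Pop_{B_n}$ is the restriction of $\Pop_{S_{2n}}$, so every row of the sorting trace is an element of $B_n$ and hence has a palindromic sequence of descending-run lengths; for the converse it runs a backward induction on the rows, writing $\Pop^{k-1}(w)=\Pop^k(w)\lambda^{-1}$ where $\lambda$ is the layered permutation encoded by the $k^\text{th}$ row and observing that the symmetry of that row forces $\lambda\in B_n$. You instead prove the equivariance $\Pop(w_0ww_0)=w_0\Pop(w)w_0$ (a correct computation from \eqref{EqCox1}, since conjugation by $w_0$ is an automorphism permuting $S$ and carrying $w_0(J)$ to $w_0(w_0Jw_0)$), deduce that the sorting plan of $w_0ww_0$ is the horizontal reflection of that of $w$, and then get both directions at once from the injectivity of the map sending a $t$-pop-stack-sortable permutation to its sorting plan --- a fact the paper establishes just before this lemma. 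Your approach buys a cleaner, single-shot equivalence and makes the underlying symmetry principle explicit; the paper's induction avoids the equivariance computation and is more self-contained at the level of individual rows. One point worth stating explicitly in your write-up: to invoke injectivity you need $w_0ww_0$ to be $t$-pop-stack-sortable, which follows immediately from your iterated commutation relation $\Pop^t(w_0ww_0)=w_0\Pop^t(w)w_0=e$, but it should be said.
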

\begin{proof}
Suppose $w\in B_n$. We have seen that  $\Pop_{B_n}$ is the restriction of $\Pop_{S_{2n}}$ to $B_n$. Therefore, the permutations in the rows of the sorting trace of $w$ are $w,\Pop_{B_n}(w),\Pop_{B_n}^2(w),\ldots,\Pop_{B_n}^{t-1}(w)$. If $k\in[t]$ and $\delta_1,\ldots,\delta_r$ are the descending runs (from left to right) of $\Pop_{B_n}^{k-1}(w)$, then the length of $\delta_i$ is equal to the length of $\delta_{r+1-i}$ for each $i\in[r]$ (because $\Pop_{B_n}^{k-1}(w)\in B_n$). In other words, the $k^\text{th}$ row of $\sigma$ is a symmetric segment of length $2n$ and order $1$. As this is true for every $k\in [t]$, $\sigma$ must be a type-$B$ sorting plan. 

Conversely, suppose $\sigma$ is a type-$B$ sorting plan. We will prove by backward induction on $k$ that $\Pop^{k-1}(w)\in B_n$ for all $k\in[t+1]$; setting $k=1$ will then show that $w\in B_n$. We certainly have $\Pop^t(w)=e\in B_n$, so let us choose $k\in [t]$ and assume we have already proven that $\Pop^k(w)\in B_n$. Let $m_1,\ldots,m_r$ be the lengths of the descending runs (from left to right) of $\Pop^{k-1}(w)$. Let $\lambda$ be the unique layered permutation in $S_{2n}$ whose descending runs (from left to right) have lengths $m_1,\ldots,m_r$. By the discussion in Section~\ref{Subsec:PopMap}, we have $\Pop^k(w)=\Pop^{k-1}(w)\lambda$. Since $\sigma$ is symmetric and $m_1,\ldots,m_r$ are the lengths of the blocks in the $k^\text{th}$ row of $\sigma$, we have $m_i=m_{r+1-i}$ for all $i\in[r]$. It follows that $\lambda\in B_n$. Thus, $\Pop^{k-1}(w)=\Pop^k(w)\lambda^{-1}\in B_n$.  
\end{proof}

\begin{proof}[Proof of Theorem~\ref{ThmCox4}]
If $t=0$, then $\displaystyle \sum_{n\geq 1}\left|\Pop_{B_n}^{-t}(e)\right|z^n=\dfrac{z}{1-z}$ is rational, so we may assume $t\geq 1$. 
Given a palindromic word $x=x_1\cdots x_{2n}\in\Sigma_t^{2n}$, let $\half(x)=x_1\cdots x_n\in\Sigma_t^n$. Note that $x$ is uniquely determined by $\half(x)$ since $x=\half(x)\rev(\half(x))$. Let $\SP^B(t)$ be the set of type-$B$ sorting plans of order $t$. Each of the words in $\psi(\SP^B(t))$ is a palindrome of even length, so it makes sense to define the language $\half(\psi(\SP^B(t)))=\{\half(\psi(\sigma)):\sigma\in\SP^B(t)\}$. 

We have seen that the map sending each $t$-pop-stack-sortable permutation to its sorting plan is a bijection from the set of $t$-pop-stack-sortable permutations in $S_{2n}$ to $\SP_{2n}(t)$. It follows from Lemma~\ref{LemCox8} that the number of $t$-pop-stack-sortable elements of $B_n$ is equal to the number of type-$B$ sorting plans of length $2n$ and order $t$. This is also equal to the number of words of length $n$ in the language $\half(\psi(\SP^B(t)))$. Therefore, in order to prove that the generating function $\displaystyle \sum_{n\geq 1}\left|\Pop_{B_n}^{-t}(e)\right|z^n$ is rational, it suffices to show that $\half(\psi(\SP^B(t)))$ is a regular language. 

Let $U$ be the set of words $u\in\Sigma_t^*$ such that $\psi^{-1}(u)$ and $\psi^{-1}(\rev(u))$ do not contain any bounded forbidden segments while $\psi^{-1}(u\rev(u))$ does contain a bounded forbidden segment. Consider the following six properties that a word $x\in\Sigma_t^*$ may or may not have:
\begin{enumerate}[(i)]
\item\label{B1} $x$ begins with the letter $0$;
\item\label{B2} every block of $\psi^{-1}(x)$ that is not in the first row has length at most $3$;
\item\label{B3} every right-open block of $\psi^{-1}(x)$ that is not in the first row has length at most $1$;
\item\label{B4} $\psi^{-1}(x)$ does not contain any bounded forbidden segments;  
\item\label{B5} $\psi^{-1}(\rev(x))$ does not contain any bounded forbidden segments;  
\item\label{B6} no suffix of $x$ is in $U$. 
\end{enumerate}

It follows from Lemma~\ref{LemCharacterizeSPs} that a word $x\in\Sigma_t^*$ satisfies all six of these properties if and only if $\psi^{-1}(x\rev(x))$ is a sorting plan. Indeed, saying $x$ satisfies \eqref{B1} is equivalent to saying that $\psi^{-1}(x\rev(x))$ is an operation array. Saying $x$ satisfies \eqref{B2} and \eqref{B3} is equivalent to saying that every block of $\psi^{-1}(x\rev(x))$ that is not in the first row of $\psi^{-1}(x\rev(x))$ has length at most $3$. Finally, saying that $x$ satisfies \eqref{B4}, \eqref{B5}, and \eqref{B6} is equivalent to saying that $\psi^{-1}(x\rev(x))$ does not contain any bounded forbidden segments. Consequently, $\half(\psi(\SP^B(t)))$ is the set of words satisfying all six of the above properties. For each $\text{P}\in\{\text{i},\text{ii},\text{iii},\text{iv},\text{v},\text{vi}\}$, let $\mathcal L_\text{P}$ be the language of words over $\Sigma_t^*$ that satisfy the property ($\text{P}$). Since, by Lemma~\ref{LemRegular}, the intersection of a finite collection of regular languages is regular, our proof will be complete once we demonstrate that each of the languages $\mathcal L_{\text{P}}$ is regular. 

We make repeated tacit use of Lemma~\ref{LemRegular} in this paragraph. We have $\mathcal L_{\text{i}}=\{0\}\Sigma_t^*$, so $\mathcal L_{\text{i}}$ is regular. Let $Z_i$ be the set of letters $m\in\Sigma_t$ such that, when $m$ is written in binary as $x_t+2x_{t-1}+2^2x_{t-2}+\cdots+2^{t-1}x_1$, we have $x_i=1$. Equivalently, $Z_i$ is the set of elements $m$ of $\Sigma_t$ such that the  $i^\text{th}$ entry of the bar code $\psi^{-1}(m)$ is a blank space. We have \[\mathcal L_{\text{ii}}=\bigcap_{i=2}^t\left(\Sigma_t^*\setminus\left(\Sigma_t^*Z_iZ_iZ_i\Sigma_t^*\right)\right)\quad\text{and}\quad\mathcal L_{\text{iii}}=\bigcap_{i=2}^t\left(\Sigma_t^*\setminus\left(\Sigma_t^*Z_iZ_i\right)\right),\] so $\mathcal L_{\text{ii}}$ and $\mathcal L_{\text{iii}}$ are regular. Let $\mathcal F_t$ be the set of bounded forbidden segments of order $t$, which is finite by Lemma~\ref{LemFiniteBFS}. For each $f\in \mathcal F_t$, the set of words $x$ such that $\psi^{-1}(x)$ does not contain $f$ is $\Sigma^*\setminus\left(\Sigma_t^* \{\psi(f)\}\Sigma_t^*\right)$. Therefore, the language $\mathcal L_{\text{iv}}=\bigcap_{f\in\mathcal F_t}\left(\Sigma_t^*\setminus\left(\Sigma_t^* \{\psi(f)\}\Sigma_t^*\right)\right)$ is regular. Furthermore, $\mathcal L_{\text{v}}$ is regular because it is the reverse of $\mathcal L_{\text{iv}}$. 

We are left with the task of proving that $\mathcal L_{\text{vi}}$ is regular. Let $|x|$ denote the length of a word $x$. Since the set $\mathcal F_t$ of bounded forbidden segments of order $t$ is finite by Lemma~\ref{LemFiniteBFS}, there exists a positive integer $K$ such that $|\psi(f)|\leq K$ for all $f\in\mathcal F_t$. Let $U'$ be the set of words $u\in U$ such that no proper suffix of $u$ is in $U$. Consider $u\in U'$. Let $a$ be the first letter of $u$, and write $u=au'$. Note that $u'\not\in U$ because $u\in U'$. Since $u\in U$, we can write $u=vv'$ and $\rev(u)=y'y$ so that $v'$ and $y'$ are nonempty and $\psi^{-1}(v'y')\in\mathcal F_t$. Either $v$ or $y$ must be empty since, otherwise, $u'$ would be in $U$. Therefore, $|u|=\max\{|v'|,|y'|\}<|v'y'|\leq K$. This proves that every word in $U'$ has length at most $K$, so $U'$ is finite. In particular, $U'$ is a regular language by Lemma~\ref{LemRegular}. Note that a word $x$ is in $\mathcal L_{\text{vi}}$ if and only if no suffix of $x$ is in $U'$. In symbols, this says that $\mathcal L_{\text{vi}}=\Sigma_t^*\setminus(\Sigma_t^*U')$, so $\mathcal L_{\text{vi}}$ is regular by Lemma~\ref{LemRegular}.
\end{proof}

\section{$t$-Pop-Stack-Sortable Elements in Type $\widetilde A$}\label{SecAffine}

For $n\geq 1$, an \dfn{affine permutation} of size $n$ is a bijection $w:\mathbb Z\to\mathbb Z$ such that 
\begin{equation}\label{EqCox3}
w(i+n)=w(i)+n\quad\text{for all }i\in\mathbb Z
\end{equation} and 
\begin{equation}\label{EqCox2}
\sum_{i=1}^nw(i)=\binom{n+1}{2}.
\end{equation} The set $\widetilde S_n$ of affine permutations of size $n$ forms a group under composition; it is a Coxeter group of type $\widetilde A_{n-1}$. The simple generators are $\widetilde s_1,\ldots,\widetilde s_n$, where $\widetilde s_i$ is the affine permutation that swaps $i+mn$ and $i+mn+1$ for all $m\in\mathbb Z$ and fixes all other elements of $\mathbb Z$. The simple generator $\widetilde s_i$ is a right descent of an affine permutation $w$ if and only if $w(i)>w(i+1)$. Furthermore, $\ell(w)$ is equal to the number of pairs $(i,j)\in[n]\times\mathbb Z$ such that $i<j$ and $w(i)>w(j)$.  

It will be useful to consider the one-line notation of a bijection $w:\mathbb Z\to\mathbb Z$, which is simply the bi-infinite word $\cdots w(-2)w(-1)w(0).w(1)w(2)\cdots$. The decimal point between $w(0)$ and $w(1)$ is meant to indicate which letters are indexed by which integers. For example, $\cdots(-1)0.123\cdots$ represents the identity element $e$ of $\widetilde S_n$, while $\cdots(-1)01.23\cdots$ represents the bijection given by $i\mapsto i+1$, which is not an affine permutation because it fails to satisfy \eqref{EqCox2}. 

A \dfn{descending run} of an affine permutation $w$ is a maximal consecutive decreasing subsequence of $w$. We say an affine permutation $w\in \widetilde S_n$ is \dfn{layered} if there exists $k\in\{0,\ldots,n-1\}$ such that $(w(k+1)-k)(w(k+2)-k)\cdots(w(k+n)-k)$ is a layered permutation in $S_n$. In other words, we can think of a layered affine permutation as an infinite direct sum of decreasing permutations. Just as for symmetric groups, one can show that an affine permutation $w\in\widetilde S_n$ is layered if and only if $w=w_0(D_R(w))$. Hence, for an arbitrary $w\in\widetilde S_n$, we can compute $\Pop_{\widetilde S_n}(w)$ (using \eqref{EqCox1}) by multiplying $w$ on the left by the unique layered affine permutation that has the same right descent set as $w$. Using this description, it is straightforward to check that $\Pop_{\widetilde S_n}(w)$ is obtained by reversing all of the descending runs of $w$ while keeping entries in different descending runs in the same relative order.

\begin{lemma}\label{LemCox9}
Let $w\in\widetilde S_n$. Every descending run of $\Pop(w)$ has length at most $3$. 
\end{lemma}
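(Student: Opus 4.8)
The plan is to work directly from the explicit description of $\Pop$ on $\widetilde S_n$ recalled just before the lemma: $\Pop(w)$ is obtained from $w$ by reversing each descending run while keeping entries in distinct descending runs in the same relative order. Write $v=\Pop(w)$, and call the descending runs of $w$ the \emph{blocks}; these blocks partition $\mathbb Z$ into intervals of consecutive positions, and $v$ arises by reversing the values within each block without moving any block. The first observation I would record is that within any single block $v$ is \emph{strictly increasing} (it is a reversed decreasing sequence). Hence if two consecutive positions $j$ and $j+1$ lie in the same block, then $v(j)<v(j+1)$, so any two consecutive positions belonging to a descending run of $v$ must lie in \emph{distinct} blocks. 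Since the blocks are intervals tiling $\mathbb Z$, this forces $j$ to be the last position of its block and $j+1$ to be the first position of the next block.

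Next I would argue by contradiction. Suppose $v$ has a descending run occupying (at least) four consecutive positions $j,j+1,j+2,j+3$, and examine the two interior positions $j+1$ and $j+2$. From $v(j)>v(j+1)$ the positions $j$ and $j+1$ lie in different blocks, so $j+1$ is the first position of its block; from $v(j+1)>v(j+2)$ the positions $j+1$ and $j+2$ lie in different blocks, so $j+1$ is the last position of its block. Thus the block containing $j+1$ is the singleton $\{j+1\}$, and by the identical reasoning applied to $j+2$ (using $v(j+1)>v(j+2)>v(j+3)$) the block containing $j+2$ is the singleton $\{j+2\}$. These are two consecutive descending runs of $w$ sitting at adjacent positions, so the boundary between them is an ascent of $w$, giving $w(j+1)<w(j+2)$. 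But reversal does nothing to a singleton block, so $v(j+1)=w(j+1)$ and $v(j+2)=w(j+2)$, while the descending run gives $v(j+1)>v(j+2)$, i.e.\ $w(j+1)>w(j+2)$. This contradiction shows that no descending run of $v$ has length $4$ or more, which is exactly the claim.

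The argument is entirely local and combinatorial, so the bi-infinite nature of affine permutations causes no difficulty. The only delicate point is the bookkeeping that converts ``the descending run of $v$ has length at least $4$'' into ``two adjacent interior positions are singleton blocks of $w$''; I expect this to be the main (though modest) obstacle. The cleanest route, and the one above, is to avoid describing the whole block structure of the long run and instead reason solely about the two interior positions $j+1$ and $j+2$, so that the needed contradiction is simply the clash between the ascent of $w$ across two adjacent singleton blocks and the descent of $v$ demanded by the run.
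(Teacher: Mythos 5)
Your proposal is correct and follows essentially the same route as the paper: both arguments note that a reversed run is increasing, so four consecutive descents in $\Pop(w)$ force the two interior positions to be singleton runs of $w$, and the resulting ascent of $w$ between those adjacent singleton runs contradicts the descent required in $\Pop(w)$. The only difference is cosmetic bookkeeping (you track positions and block boundaries, the paper tracks which run of $w$ contains each entry), so there is nothing to add.
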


\begin{proof}
Let $\Pop(w)=v$, and suppose, by way of contradiction, that there exists $i\in\mathbb Z$ such that $v(i)>v(i+1)>v(i+2)>v(i+3)$. For $j\in\{0,1,2,3\}$, let $\delta_j$ be the descending run of $w$ that contains the entry $v(i+j)$. Since $v$ is obtained by reversing the descending runs of $w$, the descending runs $\delta_0,\delta_1,\delta_2,\delta_3$ are distinct and appear consecutively (in this order) in $w$. This implies that the only entry in $\delta_1$ is $v(i+1)$ and that the only entry in $\delta_2$ is $v(i+2)$. Since $v(i+1)>v(i+2)$, this contradicts the fact that $\delta_1$ and $\delta_2$ are distinct descending runs. 
\end{proof}

Our goal in this section is to prove Theorem~\ref{ThmCox5}, which states that the generating function $\displaystyle \sum_{n\geq 1}\left|\Pop_{\widetilde S_n}^{-t}(e)\right|z^n$ is rational. First, we should check that this generating function is even well-defined! In other words, we should verify that for each $t\geq 0$, there are only finitely many $t$-pop-stack-sortable affine permutations in $\widetilde S_n$. This follows from our proof of Theorem~\ref{ThmCox1}. In that proof, we showed that $\left|O_{\Pop_{\widetilde S_n}}(w)\right|\geq\ell(w)/K+1$ for all $w\in \widetilde S_n$, where $K=\max\limits_{J\in\mathcal N(\widetilde S_n,S)}\ell(w_0(J))$. Since there are only finitely many elements of $\widetilde S_n$ of each fixed length, there are only finitely many $t$-pop-stack-sortable affine permutations in $\widetilde S_n$. 

We are going to make use of the ideas from the previous section concerning sorting traces, sorting plans, segments, and semitraces, but we need to modify them for the affine setting. Fix $t\geq 1$, and consider a $t$-pop-stack-sortable affine permutation $w\in\widetilde S_n$. Write out the one-line notations of $w,\Pop(w),\Pop^2(w),\ldots,\Pop^{t-1}(w)$, with $\Pop^k(w)$ directly below $\Pop^{k-1}(w)$ for each $k\in[t-1]$. Draw boxes around the descending runs of each of these affine permutations. The resulting bi-infinite array of numbers and boxes is the \dfn{affine sorting trace} of $w$. Deleting the numbers in the sorting trace produces a bi-infinte array of boxes called the \dfn{affine sorting plan} of $w$. We call $n$ and $t$ the \dfn{period} and \dfn{order}, respectively, of the affine sorting trace and the affine sorting plan. Note that an affine sorting plan of period $n$ also has period $dn$ for each positive integer $d$. The columns of an affine sorting plan are indexed by $\mathbb Z$ (so an affine sorting plan can change when it is shifted). Figure~\ref{FigCox7} shows the affine sorting trace and affine sorting plan of order $3$ of the affine permutation $w\in\widetilde S_5$ with $w(1)=0$, $w(2)=3$, $w(3)=2$, $w(4)=6$, $w(5)=4$. Denote by $\widetilde{\SP}_n(t)$ the set of all affine sorting plans of period $n$ and order $t$ (i.e., the set of all affine sorting plans of $t$-pop-stack-sortable permutations in $\widetilde S_n$). Let $\widetilde{\SP}(t)=\bigcup_{n\geq 1}\widetilde{\SP}_n(t)$. As in the non-affine case, boxes in affine sorting traces and affine sorting plans are called \dfn{blocks}.

\begin{figure}[ht]
  \begin{center}{\includegraphics[height=3.341cm]{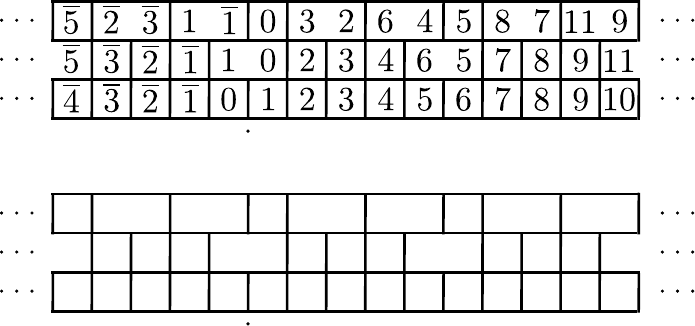}}
  \end{center}
  \caption{The affine sorting trace (top) and affine sorting plan (bottom) of order $3$ of the affine permutation $w\in\widetilde S_5$ satisfying $w(1)=0$, $w(2)=3$, $w(3)=2$, $w(4)=6$, $w(5)=4$. In each diagram, notice the decimal point between the column indexed by $0$ and the column indexed by $1$.}\label{FigCox7}
\end{figure}

Bar codes have the same meaning that they did in the previous section, and we still have a natural bijection $\psi$ between bar codes of order $t$ and elements of the alphabet $\Sigma_t=\{0,\ldots,2^t-1\}$. Segments also have the same meaning as in the previous section, except we now allow them to be bi-infinite (in which case, the columns are indexed by $\mathbb Z$). Blocks of segments are defined in the obvious way (blocks are now permitted to be infinitely long). In this setting, $\psi$ extends to an injection from the set of segments of order $t$ to the set of finite or bi-infinite words over $\Sigma_t$. We say a bi-infinite segment $\sigma$ is \dfn{$n$-periodic} if $\psi(\sigma)$ is an $n$-periodic word. As before, we say a segment $\sigma$ \dfn{contains} a segment $\sigma'$ if $\psi(\sigma)$ contains $\psi(\sigma')$ as a factor. Finally, we say a segment $\sigma$ is \dfn{non-Escher}\footnote{This clever terminology is stolen from \cite{Adin}, where it was used in a similar but different manner. The fact that an affine sorting plan is non-Escher corresponds to the fact that an affine permutation in $\widetilde S_n$ cannot have the entire set $S=\{\widetilde s_1,\ldots,\widetilde s_n\}$ as its right descent set, contrary to M. C. Escher's classical pieces of art that portray staircases perpetually descending and somehow looping back on themselves.} if it does not contain any infinitely long blocks. 

Now suppose we are given a non-Escher bi-infinite segment $\sigma$ of order $t$, which we assume has its columns indexed by the integers. Place the identity affine permutation below $\sigma$ so that for each $i\in\mathbb Z$, the number $i$ is below column $i$. Fill the rows of $\sigma$ from bottom to top. At each step, copy the numbers in the $(k+1)^\text{th}$ into the $k^\text{th}$ row and then reverse the numbers within each block in the $k^\text{th}$ row. The resulting array $T$, excluding the identity affine permutation at the very bottom, is called the \dfn{affine semitrace} of $\sigma$. Given a pair of integers $(a,b)$ with $a<b$, we define $\sigma_{a,b}$ just as in the previous section. As before, we say the pair $(a,b)$ is a \dfn{violating pair} of the affine semitrace $T$ if there is some row of $T$ that either contains $a$ and $b$ in the same block with $a$ immediately before $b$ or contains $a$ and $b$ in different blocks with $b$ immediately before $a$. We say the segment $\sigma_{a,b}$ is \dfn{forbidden} if $(a,b)$ is a violating pair in $T$. 

Note that whether or not a finite-length segment is forbidden is a local property; this means that a segment of finite length is forbidden in the affine setting if and only if it is forbidden in the sense of the preceding section. Thus, the set $\mathcal F_t$ of bounded forbidden segments of order $t$ is the same as it was in the previous section. In particular, bounded forbidden segments are of finite length, and the set $\mathcal F_t$ is finite by Lemma~\ref{LemFiniteBFS}. 

The map sending each affine permutation to its affine sorting plan is a bijection from $\Pop_{\widetilde S_n}^{-t}(e)$ to $\widetilde{\SP}_n(t)$. Indeed, each $w\in\widetilde S_n$ is determined by its affine sorting plan $\sigma$ because $w$ is the sequence of numbers in the first row of the affine semitrace of $\sigma$.

The following lemma serves as an affine version of Lemma~\ref{LemCharacterizeSPs}. 

\begin{lemma}\label{LemCox10}
A segment $\sigma$ of order $t$ is in $\widetilde{\SP}_n(t)$ if and only if the following conditions hold: 
\begin{itemize}
\item $\sigma$ is $n$-periodic and non-Escher; 
\item every block of $\sigma$ that is not in the first row of $\sigma$ has length at most $3$; 
\item $\sigma$ does not contain any bounded forbidden segments. 
\end{itemize}
\end{lemma}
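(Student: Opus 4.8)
The plan is to prove Lemma~\ref{LemCox10} by closely paralleling the structure of Lemma~\ref{LemCharacterizeSPs}, adapting each of the three conditions to the affine setting and checking that the reconstruction procedure behaves correctly. The forward direction is the routine bookkeeping part: if $\sigma\in\widetilde{\SP}_n(t)$, then $\sigma$ is the affine sorting plan of some $t$-pop-stack-sortable $w\in\widetilde S_n$, so $n$-periodicity is immediate from the periodicity relation \eqref{EqCox3} satisfied by $w$ and each of its $\Pop$-iterates, and $\sigma$ is non-Escher because no row can be a single infinite descending block (this corresponds exactly to the fact, noted in the footnote, that an affine permutation cannot have all of $S$ as its right descent set). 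The bound of $3$ on the length of blocks not in the first row is precisely Lemma~\ref{LemCox9} applied to each row $\Pop^{k-1}(w)$ for $k\in\{2,\ldots,t\}$, since those rows are all images of $\Pop$. Finally, the absence of bounded forbidden segments follows because a violating pair would record an order relation between two entries that is inconsistent with the descending-run reversals, exactly as Claesson and Gu{\dh}mundsson argued in the non-affine case, using the fact (already noted in the text) that forbiddenness of a finite segment is a purely local property.

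For the converse, I would start from a segment $\sigma$ satisfying the three conditions and reconstruct the affine permutation $w$ as the top row of the affine semitrace $T$ of $\sigma$. The non-Escher hypothesis guarantees that the semitrace construction is well-defined, since reversing a block requires that block to be finite. I would then verify that $w$ lies in $\widetilde S_n$: the $n$-periodicity of $\sigma$ forces the semitrace, and hence $w$, to satisfy the translation relation \eqref{EqCox3}, and the fact that each block-reversal is a permutation of a finite contiguous set preserves the affine-permutation balance condition \eqref{EqCox2} (reversing a descending run does not change the multiset of entries, so the sum $\sum_{i=1}^n w(i)$ is unchanged from that of the identity). The remaining task is to show that filling in $\sigma$ actually produces the $\Pop$-dynamics, i.e. that row $k-1$ really equals $\Pop$ applied to row $k$; this is where the block-length bound and the no-forbidden-segment condition are used, guaranteeing that each row's boxes are genuinely the descending runs of the affine permutation in that row (the forbidden segments rule out exactly the local configurations where a box would fail to be a maximal descending run or would merge entries inconsistently).

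The main obstacle I anticipate is the affine subtlety in the converse: I need to be careful that local, finite-radius conditions on $\sigma$ suffice to certify a global, bi-infinite property of $w$, namely that each row is the $\Pop$-image of the row below it across all of $\mathbb Z$ simultaneously. In the symmetric-group case the segment has finite length with closed ends, so ``descending run'' is unambiguous; in the affine case one must confirm that no infinite cascade of reversals can conspire to violate the descending-run structure somewhere far from any fixed column. The $n$-periodicity is what tames this: because $\sigma$ repeats with period $n$ and is non-Escher, it suffices to check the relevant local conditions within one fundamental domain together with its immediate neighbors, and the forbidden-segment condition (being a bounded, local condition) then propagates periodically. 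Thus the heart of the argument is to translate each of the three hypotheses into the assertion that, within each period, the boxes of row $k-1$ coincide with the descending runs of $\Pop$ applied to row $k$, and then invoke periodicity to extend this globally. Once that correspondence is established, iterating from the bottom row (which is forced to be the identity, since $\Pop^t(w)=e$) up to the top recovers $w$ and shows $\Pop_{\widetilde S_n}^t(w)=e$, so $\sigma\in\widetilde{\SP}_n(t)$.
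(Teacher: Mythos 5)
Your proposal is correct and follows essentially the same route as the paper: the forward direction extracts $n$-periodicity and the non-Escher property from \eqref{EqCox3}, the block bound from Lemma~\ref{LemCox9}, and the absence of forbidden segments from the fact that a genuine sorting trace has no violating pairs; the converse builds the affine semitrace bottom-up, checks each row stays in $\widetilde S_n$, and uses the forbidden-segment condition to certify that the boxes in each row are exactly the descending runs, so the semitrace is the actual trace. The only cosmetic difference is that you verify the balance condition \eqref{EqCox2} directly by a multiset argument, whereas the paper observes more cleanly that each row is obtained from the one below by multiplication by a layered affine permutation, which already lies in $\widetilde S_n$.
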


\begin{proof}
Suppose first that $\sigma$ is in $\widetilde{\SP}_n(t)$. This means that $\sigma$ is the sorting plan of a $t$-pop-stack-sortable affine permutation $w\in\widetilde S_n$. The lengths of the blocks in the $k^\text{th}$ row of $\sigma$ are the lengths of the descending runs of $\Pop_{\widetilde S_n}^{k-1}(w)$. The condition \eqref{EqCox3} ensures that, for each $k\in[t]$, the right descent set of $\Pop_{\widetilde S_n}^{k-1}(w)$ is not the entire set $S=\{\widetilde s_1,\ldots,\widetilde s_n\}$, so $\sigma$ is non-Escher. The condition \eqref{EqCox3} also guarantees that $\sigma$ is $n$-periodic. Furthermore, Lemma~\ref{LemCox9} implies that every block of $\sigma$ that is not in the first row has length at most $3$. Because $w$ is $t$-pop-stack-sortable, the affine trace of $w$ is the same as the affine semitrace of $\sigma$. The affine trace of $w$ cannot contain a violating pair since its blocks are constructed by putting boxes around the descending runs of the affine permutations in each row. It follows that $\sigma$ cannot contain a forbidden segment. In particular, $\sigma$ does not contain any bounded forbidden segments.

To prove the converse, assume $\sigma$ satisfies the three bulleted conditions in the statement of the lemma. Construct the affine semitrace $T$ of $\sigma$. Consider the step when we fill the $k^\text{th}$ row of the semitrace by copying the numbers in the $(k+1)^\text{th}$ row into the $k^\text{th}$ row and then reversing the numbers within each block in the $k^\text{th}$ row. If we already know that the entries in the $(k+1)^\text{th}$ row form an affine permutation in $\widetilde S_n$, then the entries in the $k^\text{th}$ row must also form an affine permutation in $\widetilde S_n$. Indeed, it follows from the first two bulleted conditions that the entries in the $k^\text{th}$ row are obtained by multiplying the affine permutation in the $(k+1)^\text{th}$ row by a layered affine permutation. Let $w$ be the bounded affine permutation in the first row of $T$. It follows from the second bulleted condition that every segment contained in $\sigma$ is bounded, so it follows from the third bulleted condition that $\sigma$ does not contain any forbidden segments. Consequently, $T$ does not contain any violating pairs. This implies that $w$ is $t$-pop-stack-sortable and that $T$ is the affine trace of $w$. Hence, $\sigma\in\widetilde{\SP}_n(t)$. 
\end{proof}

Before proving our final theorem, we need one additional lemma about regular languages. Given a word $x=x_1\cdots x_n$ over a finite alphabet $\mathcal A$, define $\cyc(x)$ to be the cyclic shift $x_2\cdots x_nx_1$. For a language $\mathcal L\subseteq\mathcal A^*$, define $\cyc(\mathcal L)=\{\cyc(x):x\in \mathcal L\}$. 

\begin{lemma}[{\cite[Chapter 4, Exercise 20]{Linz}}]\label{LemCox11}
If $\mathcal L$ is a regular language over a finite alphabet $\mathcal A$, then so is $\cyc(\mathcal L)$. 
\end{lemma}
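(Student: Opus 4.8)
The plan is to express $\cyc(\mathcal{L})$ as a finite union of concatenations of regular languages and then invoke the closure properties collected in Lemma~\ref{LemRegular}. The key observation is a decomposition of the cyclic shift according to which letter is moved to the end. For each letter $a\in\mathcal{A}$, let $D_a=\{w\in\mathcal{A}^*:aw\in\mathcal{L}\}$ be the left quotient of $\mathcal{L}$ by $a$. Every nonempty word $x=x_1\cdots x_n\in\mathcal{L}$ satisfies $\cyc(x)=(x_2\cdots x_n)x_1$, and writing $a=x_1$ and $w=x_2\cdots x_n$, we have $aw=x\in\mathcal{L}$, so $\cyc(x)=wa\in D_a\{a\}$. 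Conversely, each $wa$ with $w\in D_a$ equals $\cyc(aw)$ for some $aw\in\mathcal{L}$. Since $\cyc(\varepsilon)=\varepsilon$, these observations yield the identity
\[
\cyc(\mathcal{L})=\left(\bigcup_{a\in\mathcal{A}}D_a\{a\}\right)\cup\left(\{\varepsilon\}\cap\mathcal{L}\right),
\]
and the whole argument reduces to checking that the pieces on the right are regular.

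First I would verify that each left quotient $D_a$ is regular, and this is the one step that uses the automaton definition of regularity directly rather than Lemma~\ref{LemRegular} (which does not list quotients among its closure operations). Fixing a deterministic finite automaton $(Q,\mathcal{A},\delta,q_0,F)$ that accepts $\mathcal{L}$, the automaton obtained by replacing the start state $q_0$ with $\delta(q_0,a)$ and leaving everything else unchanged accepts a word $w$ precisely when $\delta(q_0,aw)\in F$, i.e.\ precisely when $w\in D_a$; hence $D_a$ is regular. Next I would assemble the displayed identity: each singleton $\{a\}$ and the set $\{\varepsilon\}\cap\mathcal{L}$ (which is either $\emptyset$ or $\{\varepsilon\}$) is finite and hence regular, each product $D_a\{a\}$ is regular by closure under concatenation, and the entire expression is a finite union of regular languages and so is regular by Lemma~\ref{LemRegular}. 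The finiteness of $\mathcal{A}$ is essential here, as it makes the indexed union finite.

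This is a routine exercise, so the only genuine care needed is the bookkeeping around short words, which is the step I would double-check most carefully. The empty word must be handled separately because it does not decompose as $aw$; it lies in $\cyc(\mathcal{L})$ exactly when it lies in $\mathcal{L}$, which is why the term $\{\varepsilon\}\cap\mathcal{L}$ appears. Length-one words are then automatic under the decomposition: if $x=a$, then $w=\varepsilon\in D_a$ and $\cyc(a)=a=wa$, consistent with $\cyc$ fixing single letters. With these edge cases confirmed, the displayed identity exhibits $\cyc(\mathcal{L})$ as regular, completing the proof.
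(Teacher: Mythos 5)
Your proof is correct, and there is in fact no in-paper argument to compare it against: the paper states Lemma~\ref{LemCox11} with only a citation to \cite[Chapter 4, Exercise 20]{Linz} and supplies no proof of its own, so your write-up is a genuine addition rather than a variant of an existing argument. Your decomposition $\cyc(\mathcal L)=\bigcup_{a\in\mathcal A}D_a\{a\}\cup(\{\varepsilon\}\cap\mathcal L)$ with $D_a=\{w:aw\in\mathcal L\}$ is the standard and correct solution for the statement as the paper defines it, where $\cyc$ is a \emph{single} one-step rotation. You also correctly identify the one step Lemma~\ref{LemRegular} cannot supply --- left quotients are not among its listed closure operations --- and your direct construction (re-rooting the DFA at $\delta(q_0,a)$) handles it cleanly, as does your treatment of the edge cases $\varepsilon$ and length-one words. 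One remark worth making: the cyclic-shift exercise in automata texts is often the stronger claim that the full rotation closure $\{vu:uv\in\mathcal L\}$ is regular, which needs a more elaborate construction (e.g., a nondeterministic automaton guessing the split state and running two phases); for the paper's one-step $\cyc$ your elementary quotient argument is exactly the right tool, and iterating it immediately gives regularity of $\cyc^k(Y)$ for each fixed $k$, which is what the proof of Theorem~\ref{ThmCox5} actually uses.
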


\begin{proof}[Proof of Theorem~\ref{ThmCox5}]
If $t=0$, then $\displaystyle \sum_{n\geq 1}\left|\Pop_{\widetilde S_n}^{-t}(e)\right|z^n=\dfrac{z}{1-z}$ is rational, so we may assume $t\geq 1$. Let $K=\max\{K'+1,5\}$, where $K'$ is the maximum length of a bounded forbidden segment of order $t$ (this is finite by Lemma~\ref{LemFiniteBFS}). To prove the theorem, it suffices to show that $\displaystyle \sum_{n\geq K}\left|\Pop_{\widetilde S_n}^{-t}(e)\right|z^n$ is rational.

Assume $n\geq K$. Suppose $w\in\widetilde S_n$, and let $T$ and $\sigma$ be the affine sorting trace and affine sorting plan, respectively, of $w$. The $i^\text{th}$ column of $T$ is the column of numbers whose top entry is $w(i)$. Recall the injection $\psi$ that sends segments of order $t$ to words over $\Sigma_t$. Also, recall that $\sigma$ can be seen as a bi-infinite sequence of bar codes; let $b_i$ be the bar code in $T$ that is immediately to the left of the $i^\text{th}$ column of $T$. Let $a_i\in\Sigma_t$ be the letter corresponding to $b_i$ (via its binary expansion). Define $\alpha_n(\sigma)$ to be the segment of length $n-1$ such that $\psi(\alpha_n(\sigma))=a_1a_2\cdots a_n$. For example, if $\sigma$ is the affine sorting plan in Figure~\ref{FigCox7}, then \[\alpha_5(\sigma)=\psi^{-1}(20404)=\begin{array}{l}\includegraphics[height=1.366cm]{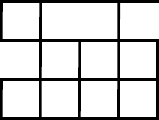}\end{array}.\] Notice that $\sigma$ can be reconstructed from $\alpha_n(\sigma)$ because it is $n$-periodic by Lemma~\ref{LemCox10}. Thus, $\alpha_n$ is an injection from $\widetilde{\SP}_n(t)$ into the set of segments of length $n-1$ and order $t$. The number of $t$-pop-stack-sortable affine permutations in $\widetilde S_n$ is equal to $|\widetilde{\SP}_n(t)|$, which is equal to $|\alpha_n(\widetilde{\SP}_n(t))|$, which is equal to $|\psi(\alpha_n(\widetilde{\SP}_n(t)))|$. 

Consider the language $\mathcal L=\bigcup_{n\geq K}\psi(\alpha_n(\widetilde{\SP}_n(t)))$ over $\Sigma_t$. In order to complete the proof of the theorem, it suffices to show that $\mathcal L$ is regular. To do this, we define $Y$ to be the set of finite words $y$ over $\Sigma_t$ such that:
\begin{enumerate}[(I)]
\item\label{Aff1} $y$ has length at least $K$;
\item\label{Aff2} the first row of $\psi^{-1}(y)$ has at least one vertical bar;
\item\label{Aff3} every block of $\psi^{-1}(y)$ that is not in the first row of $\psi^{-1}(y)$ has length at most $3$;
\item\label{Aff4} $\psi^{-1}(y)$ does not contain any bounded forbidden segments. 
\end{enumerate}
Let $\mathcal L'=\bigcap_{k=0}^{K-1}\cyc^k(Y)$, where $\cyc^k(Y)=\{\cyc^k(y):y\in Y\}$. 

We claim that $\mathcal L'=\mathcal L$. To see this, first suppose $x=x_1\cdots x_n\in\mathcal L'$. Since $x\in Y$, we have $n\geq K$ by \eqref{Aff1}. Let $\sigma=\psi^{-1}(\cdots xx.xxx\cdots)$ be the bi-infinite segment obtained by concatenating $\psi^{-1}(x)$ with itself infinitely many times. We want to show that $\sigma$ satisfies the three bulleted conditions in Lemma~\ref{LemCox10}; it will then follow that $\sigma\in\widetilde{\SP}_n(t)$ so that $x=\psi(\alpha_n(\sigma))$. The $n$-periodicity of $\sigma$ is clear from its definition, and the non-Escher property of $\sigma$ follows from \eqref{Aff2}, \eqref{Aff3}, and the fact that $n\geq K\geq 5$. If $\sigma$ has a block of length at least $4$ that is not in its first row, then there is some $k\in\{0,1,2,3,4\}$ such that $\psi^{-1}(\cyc^{-k}(x))$ has a block of length at least $4$ not in its first row. However, this contradicts property \eqref{Aff3} of $\cyc^{-k}(x)$, which is in $Y$ because $K\geq 5$. Finally, if $\sigma$ contains a bounded forbidden segment, then there is some $k\in\{0,1,\ldots,K'\}$ such that $\psi^{-1}(\cyc^{-k}(x))$ contains a bounded forbidden segment. This contradicts property \eqref{Aff4} of $\cyc^{-k}(x)$, which is in $Y$ because $K\geq K'+1$. Hence, $\mathcal L'\subseteq \mathcal L$. 

Now suppose $x\in\mathcal L$. Then $x=\psi(\alpha_n(\sigma))$, where $\sigma$ is the affine sorting plan of order $t$ of some $w\in\widetilde S_n$ with $n\geq K$. Choose $k\geq 0$, and let $y=\cyc^{-k}(x)$. The word $y$ has length $n$, so it satisfies \eqref{Aff1}. Notice that $\psi(\sigma)$ is the bi-infinite word $\cdots xx.xxx\cdots$, which is obtained by shifting $\cdots yy.yyy\cdots$ by $k$. Lemma~\ref{LemCox10} tells us that $\sigma$ is non-Escher, so $y$ must satisfy \eqref{Aff2}. The second bulleted item in Lemma~\ref{LemCox10} implies that $y$ satisfies \eqref{Aff3}. Similarly, the third bulleted item in Lemma~\ref{LemCox10} implies that $y$ satisfies \eqref{Aff4}. This proves that $y\in Y$. As $k$ was arbitrary, $x\in\bigcap_{k\geq 0}\cyc^k(Y)\subseteq\mathcal L'$. 

We have now established that $\mathcal L=\mathcal L'$. We are left with the task of proving that $\mathcal L'$ is a regular language. We make tacit use of Lemma~\ref{LemRegular}. For $\text{P}\in\{\text{I},\text{II},\text{III},\text{IV}\}$, let $\mathcal L_{\text{P}}$ be the set of words $y$ in $\Sigma_t^*$ satisfying property (P). Note that $\mathcal L_{\text{I}}$ is regular because it is equal to $(\Sigma_t)_K\Sigma_t^*$, where $(\Sigma_t)_K$ is the regular language consisting of all words in $\Sigma_t^*$ of length $K$. Let $Z_1$ be the set of letters $m\in\Sigma_t$ such that, when $m$ is written in binary as $x_t+2x_{t-1}+2^2x_{t-2}+\cdots+2^{t-1}x_1$, we have $x_1=1$. Equivalently, $m\in Z_1$ if and only if the bar code corresponding to $m$ starts with a blank space. Then $\mathcal L_{\text{II}}=\Sigma_t^*\setminus(Z_1^*)$ is regular. The languages $\mathcal L_{\text{III}}$ and $\mathcal L_{\text{IV}}$ are equal to the languages $\mathcal L_{\text{ii}}$ and $\mathcal L_{\text{iv}}$, respectively, from the proof of Theorem~\ref{ThmCox4} in Section~\ref{Sec:TypeB}; we saw in that proof that these languages are regular. We conclude that $\mathcal L_{\text{I}},\mathcal L_{\text{II}},\mathcal L_{\text{III}},\mathcal L_{\text{IV}}$ are regular languages, so their intersection $Y$ is regular as well. It follows from Lemma~\ref{LemCox11} that $\cyc^k(Y)$ is regular for each $k\in\{0,\ldots,K-1\}$. Hence, $\mathcal L'$ is regular. 
\end{proof}

\section{Further Directions}\label{SecConclusion}

In Section~\ref{Subsec:Further}, we mentioned two extensions of Coxeter pop-stack-sorting operators: one to other complete meet-semilattices and one to other semilattice congruences on weak orders of Coxeter groups. We explore the first of these extensions in \cite{DefantMeeting} and explore the second in \cite{DefantCoxeterStack}. Here, we mention some other potential avenues for future work. 

The authors of \cite{Asinowski} suggested considering the average size of the forward orbit of a permutation in $S_n$ under the pop-stack-sorting map. In \cite{DefantMonotonicity}, the current author conjectured that this average number of iterations is asymptotically equal to $n$, which is the maximum possible size of a forward orbit by Ungar's theorem. We believe that the same statement should hold for Coxeter groups of other classical types as well. In these cases, the maximum possible size of the forward orbit of an element is the Coxeter number of the group by Theorem~\ref{ThmCox1}. The Coxeter number of $B_n$ is $2n$, and the Coxeter number of $D_n$ is $2n-2$. 

\begin{conjecture}
As $n\to\infty$, we have \[\frac{1}{|B_n|}\sum_{w\in B_n}\left|O_{\Pop_{B_n}}(w)\right|\sim 2n\quad\text{and}\quad\frac{1}{|D_n|}\sum_{w\in D_n}\left|O_{\Pop_{D_n}}(w)\right|\sim 2n-2.\]
\end{conjecture}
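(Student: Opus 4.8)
The plan is to prove the matching lower bounds, since the upper bounds $\frac1{|B_n|}\sum_w|O_{\Pop}(w)|\le 2n$ and $\frac1{|D_n|}\sum_w|O_{\Pop}(w)|\le 2n-2$ are immediate from Theorem~\ref{ThmCox1}. Writing $\rho(w)$ for the least $r$ with $\Pop^r(w)=e$, so that $|O_{\Pop}(w)|=\rho(w)+1$, a standard tail-sum manipulation gives
\[\frac{1}{|W|}\sum_{w\in W}|O_{\Pop}(w)|=1+\sum_{t\ge 0}\left(1-\frac{|\Pop^{-t}(e)|}{|W|}\right).\]
Thus it suffices to show that for every fixed $\epsilon>0$ the proportion of $\lfloor(1-\epsilon)h\rfloor$-pop-stack-sortable elements tends to $0$; equivalently, a uniformly random element requires at least $(1-\epsilon)h$ iterations with high probability. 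Together with the trivial inequality $\rho(w)\le h-1$, this forces $\frac1{|W|}\sum_w|O_{\Pop}(w)|\sim h$.

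To produce such a lower bound on $\rho(w)$ I would track the displacement of individual entries. The argument of Lemma~\ref{LemCox9} applies verbatim in finite type (and, for type~$B$, inside $S_{2n}$ via the identity $\Pop_{B_n}=\Pop_{S_{2n}}|_{B_n}$ from Section~\ref{Sec:TypeB}): every descending run of $\Pop(u)$ has length at most $3$, so for $k\ge 1$ each entry moves by at most $2$ positions from $\Pop^k(w)$ to $\Pop^{k+1}(w)$. Writing $\pi_k(v)$ for the position of the value $v$ in $\Pop^k(w)$, this yields $\rho(w)\ge 1+\tfrac12\max_v|\pi_1(v)-v|$. Since a uniformly random permutation has all descending runs of length $O(\log n)$ with high probability, the single application of $\Pop$ alters displacements by $o(h)$, and a short counting argument over the extreme positions and values shows that $\max_v|\pi_1(v)-v|\ge(1-\epsilon)h$ with high probability (in type~$B$ one works with a uniform centrally symmetric permutation, which does not affect this estimate). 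This already establishes $\frac1{|W|}\sum_w|O_{\Pop}(w)|\ge(\tfrac12-o(1))h$.

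The hard part will be removing the factor of $\tfrac12$, and this is precisely the difficulty underlying the still-open type-$A$ version of the conjecture. The bound ``speed at most $2$'' is attained only when the tracked entry sits at an end of a length-$3$ descending run, whereas in the extremal elements of Proposition~\ref{PropCox1} entries in fact traverse the array at average speed $1+o(1)$. To obtain the sharp constant one must show that, for a typical $w$, the entry realizing the maximal displacement performs speed-$2$ moves on only a negligible fraction of its trajectory, so that its traversal consumes $(1-o(1))$ times its displacement in steps. A promising concrete handle is the minimum entry, which one checks moves left by between $1$ and $2$ positions at every step until it reaches the first position; the crux is then to show both that its net leftward travel is $(1+o(1))\,\pi_1(\mathrm{min})$ and that an entry displaced by $\sim h$ is among the last to reach its sorted position. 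Establishing this ``typical inputs are nearly worst case'' phenomenon is the genuine obstacle.

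Finally I would handle the type-specific bookkeeping. Type~$B$ fits the framework above once one works inside $S_{2n}$ and uses Lemma~\ref{LemCox8} to pass between $\Pop_{B_n}$ and $\Pop_{S_{2n}}$, noting that a uniform element of $B_n$ corresponds to a uniform centrally symmetric permutation. Type~$D$ is \emph{not} covered by the type-$B$ machinery (its Coxeter number is $2n-2$), so I would first record the run-length-at-most-$3$ and displacement estimates in the even-signed-permutation model of $D_n$ and then rerun the same concentration argument. A sensible first milestone is the type-$A$ statement itself, since all three types should reduce to the same core estimate on the sorting time of a random element; once that estimate is in hand, the conjectured asymptotics for $B_n$ and $D_n$ would follow.
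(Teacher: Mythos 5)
The statement you are addressing is a \emph{conjecture}: the paper offers no proof of it, and the type-$A$ analogue (that the average orbit size under the pop-stack-sorting map on $S_n$ is asymptotically $n$) is itself cited as an open conjecture from \cite{DefantMonotonicity}. So there is no proof in the paper to compare against, and your proposal --- as you yourself acknowledge --- does not close the conjecture either. To your credit, the parts you do claim are sound: the upper bound is immediate from Theorem~\ref{ThmCox1}; the tail-sum identity $\frac{1}{|W|}\sum_w|O_{\Pop}(w)|=1+\sum_{t\ge 0}\bigl(1-|\Pop^{-t}(e)|/|W|\bigr)$ is correct; the observation that the argument of Lemma~\ref{LemCox9} transfers to finite type, so that entries move at most $2$ positions per iteration after the first, is valid; and the resulting whp lower bound $\rho(w)\ge(\tfrac12-o(1))h$ (via a value of near-maximal displacement, using that a uniform random permutation has maximal displacement $(1-o(1))n$ and run lengths $O(\log n)$ whp) is a genuine, if modest, partial result.

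The gap is exactly where you locate it, and it is the entire content of the conjecture: upgrading the constant $\tfrac12$ to $1$. The displacement bound is tight only if the tracked entry sits at the end of a length-$3$ run at essentially every step, and nothing in your argument controls how often that happens for a typical input; conversely, nothing guarantees that the entry of maximal displacement is among the last to be sorted, which your ``minimum entry'' heuristic would need. Until one proves that a uniformly random element requires $(1-o(1))h$ iterations --- the ``typical inputs are nearly worst case'' phenomenon --- the proposal establishes only $\frac{1}{|B_n|}\sum_{w\in B_n}|O_{\Pop_{B_n}}(w)|\ge(1-o(1))\,n$ rather than $\sim 2n$, and likewise in type $D$ (where, as you correctly note, $\Pop_{D_n}$ is not the restriction of $\Pop_{S_{2n}}$ and the combinatorial model must be redone). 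This should be presented as a partial result toward an open problem, not as a proof.
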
 

In Section~\ref{Subsec:Further}, we defined a map $\Pop_M:M\to M$, where $M$ is an arbitrary complete meet-semilattice. In Remark~\ref{RemCox1}, we defined the notion of a \emph{compulsive} map $f:M\to M$. We also exhibited a $6$-element lattice $M$ and a compulsive map $f:M\to M$ such that $\sup\limits_{x\in M}\left|O_f(x)\right|>\sup\limits_{x\in M}\left|O_{\Pop_M}(x)\right|$. It could be interesting to investigate which complete meet semilattices $M$ have the property that $\sup\limits_{x\in M}\left|O_f(x)\right|\leq\sup\limits_{x\in M}\left|O_{\Pop_M}(x)\right|$ for every compulsive map $f:M\to M$; Theorems~\ref{ThmCox1} and~\ref{ThmCox2} tell us that weak orders of Coxeter groups have this property. It is also natural to consider this question only for finite meet-semilattices, or even just for finite lattices.

\section{Acknowledgments}
The author thanks Henri M\"uhle for pointing out the connection between this work and the articles \cite{Muhle1, Muhle2}. He also thanks Nathan Williams for pointing out the connection between $\Pop$ and Brieskorn normal form. The author was supported by a Fannie and John Hertz Foundation Fellowship and an NSF Graduate Research Fellowship.

\end{document}